\def\scfig #1 #2 {\resizebox{#2}{!}{\includegraphics{#1}}}
\def\qed{\hfill$\Box$}
\newtheorem{theorem}{Theorem}
\newtheorem{lemma}[theorem]{Lemma}
\newtheorem{proposition}[theorem]{Proposition}
\newtheorem{corollary}[theorem]{Corollary}
\theoremstyle{definition}
\newtheorem{definition}[theorem]{Definition}
\theoremstyle{remark}
\newtheorem{remark}[theorem]{Remark}
\newtheorem{conjecture}[theorem]{Conjecture}
\begin{document}

\title[Guionnet-Jones-Shlyakhtenko subfactors associated to Kac algebras]{Guionnet-Jones-Shlyakhtenko subfactors associated to
finite-dimensional Kac algebras}

\author{Vijay Kodiyalam}
\address{The Institute of Mathematical Sciences, Taramani, Chennai, India 
600113}
\email{vijay@imsc.res.in}

\author{V. S. Sunder}
\address{The Institute of Mathematical Sciences, Taramani, Chennai, India 
600113}
\email{sunder@imsc.res.in}





\begin{abstract}
We analyse the Guionnet-Jones-Shlyakhtenko construction for the
planar algebra associated to a finite-dimensional Kac algebra and
identify the factors that arise as finite interpolated free group
factors.
\end{abstract}

\maketitle



The main theorem of \cite{GnnJnsShl2008} constructs an extremal finite index $II_1$ subfactor $N = M_0 \subseteq M_1 = M$ from a subfactor planar
algebra $P$ with the property that the planar algebra of $N \subseteq M$ is isomorphic to $P$. We show in this paper that if $P = P(H)$ - 
the (subfactor) planar algebra associated with an $n$-dimensional
Kac algebra $H$ (with $n > 1$) - then, for the associated subfactor $N \subseteq M$, 
there are isomorphisms
$M \cong LF({2\sqrt{n}-1})$
and $N \cong LF({2n\sqrt{n} - 2n +1})$,
where $LF(r)$ for $r > 1$ is the interpolated free group factor of
\cite{Dyk1994} and \cite{Rdl1994}.

The first three sections of this paper are devoted to recalling
 various results we need.
In \S 1 we summarise the  Guionnet-Jones-Shlyakhtenko (henceforth GJS)
construction.
We discuss, in \S 2, a presentation of the planar algebra associated to a finite-dimensional Kac algebra in terms of  generators and relations.
The goal of \S 3 is to collect together results that we use from free probability theory.
The longer sections, \S 4 and \S 5 are devoted to analysing the structure of the
factors $M_1$ and $M_2$ respectively.
The final \S 6 proves our main result.


\section{Guionnet-Jones-Shlyakhtenko
subfactors}
We begin with a quick review of the  GJS construction (see also 
\cite{JnsShlWlk2008} and \cite{KdySnd2008}). All tangles used in the definitions
are illustrated in Figure \ref{strmaps}. 

Suppose that $P$ is a subfactor planar algebra (see
\cite{Jns1999} or \cite{KdySnd2004} for detailed definitions) of modulus $\delta>1$. Construct a
tower of graded $*$-algebras $Gr_k(P)$ for $k \geq 0$ as follows. 
Set $Gr_k(P) = \oplus_{n=k}^\infty P_n$ and define multiplication
on $Gr_k(P)$ by requiring that if $a \in P_m \subseteq Gr_k(P)$ and
$b \in P_n \subseteq Gr_k(P)$, then $a \bullet b \in P_{m+n-k} \subseteq
Gr_k(P)$ is given by $a \bullet b = Z_M(a,b)$.
\begin{figure}[!htb]
\psfrag{M}{$M=M(k)_{m,n}^{m+n-k}$}
\psfrag{D}{$D=D(k)^n_n$}
\psfrag{I}{$I=I(k-1)_{n-1}^n$}
\psfrag{T}{$T=T(k)_{m,m-k}^0$}
\psfrag{a}{$1$}
\psfrag{b}{$2$}
\psfrag{a^*}{}
\psfrag{2m-2k}{\tiny $2m-2k$}
\psfrag{2n-2k}{\tiny $2n-2k$}
\psfrag{2(m-k)}{\tiny $2n-2k$}
\psfrag{2n-2k-2}{\tiny $2n-2k$}
\psfrag{k-1}{\tiny $k\!-\!1$}
\psfrag{2n-k}{\tiny $2n-$}
\psfrag{-k-1}{\tiny $k\!-\!1$}
\psfrag{m-k}{\tiny $m\!-\!k$}
\psfrag{T_{m-k}}{$2$}

\psfrag{k}{\tiny $k$}
\psfrag{2k}{\tiny $2k$}
\includegraphics[height=6cm]{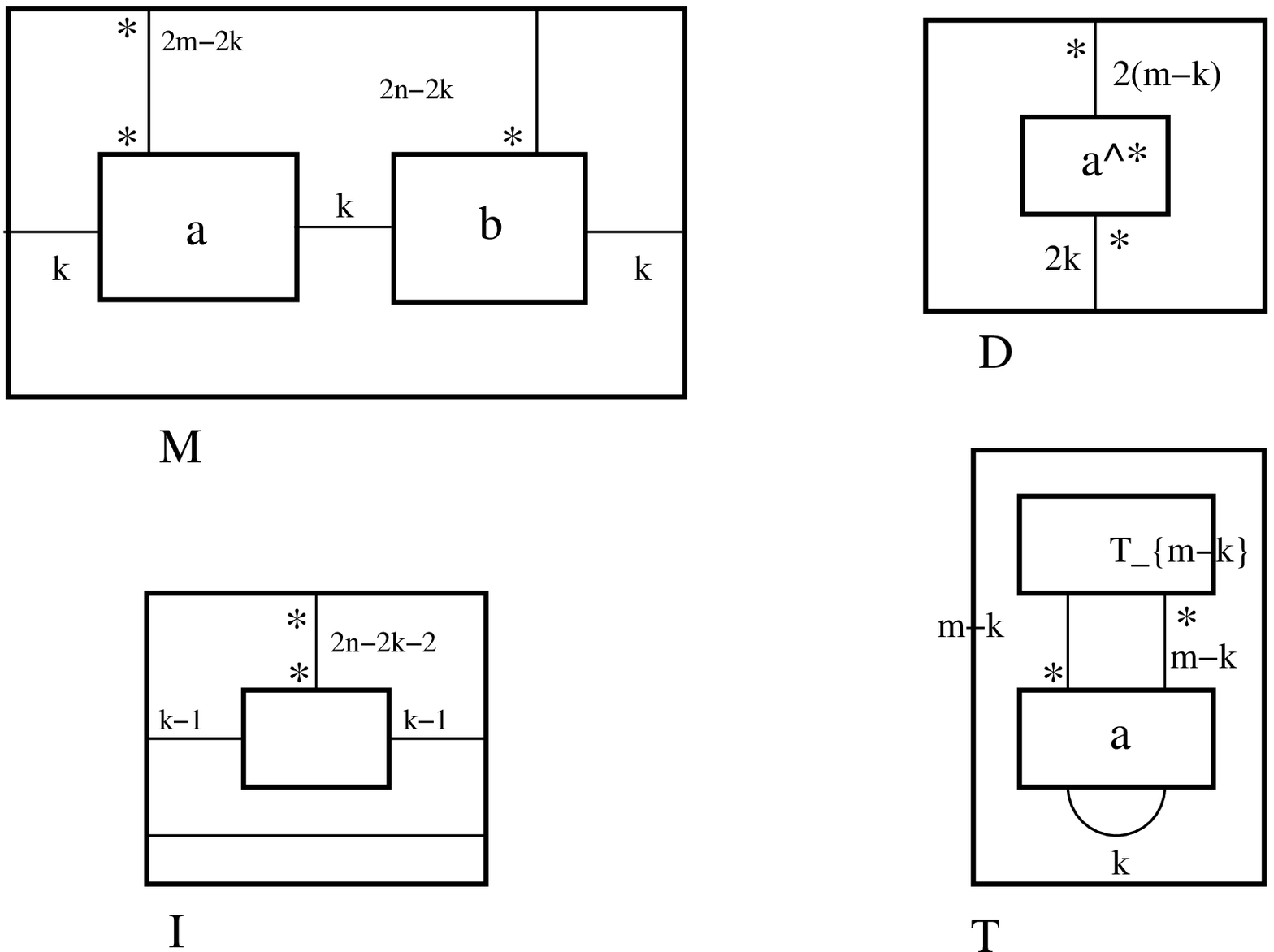}
\caption{Tangles defining structure maps of $Gr_k(P)$.}
\label{strmaps}
\end{figure}

In Figure \ref{strmaps} and other figures in this paper, we use the convention
introduced in \cite{KdySnd2008} of decorating strands in a
tangle with non-negative integers to represent cablings of that
strand. The notation for tangles such as $M=M(k)_{m,n}^{m+n-k}$ in
Figure \ref{strmaps} indicates that it is affiliated to $Gr_k(P)$, takes inputs from
$P_m$ and $P_n$ and has output in $P_{m+n-k}$.

The $*$-structure on $Gr_k(P)$ (denoted by $\dagger$ to distinguish
it from the $*$-structure of the planar algebra $P$) is defined by letting $a^\dagger \in P_n \subseteq Gr_k(P)$ be given
by $a^\dagger = Z_D(a^*)$ 
for $a \in P_n \subseteq Gr_k(P)$.
The inclusion map $Gr_{k-1}(P) \rightarrow Gr_{k}(P)$ is defined
to be the graded map whose restriction to $P_{n-1} \subseteq
Gr_{k-1}(P)$ is given by $Z_I$. 
%

%


Motivated by free probability theory (but having an entirely planar algebraic definition) is a trace $Tr_k$ defined on $Gr_k(P)$ by
letting $Tr_k(a)$ for $a \in P_m \subseteq Gr_k(P)$  be given by
$Z_T(a \otimes T_{m-k})$
%
%
%
%
where $T_m \in P_m$ is defined to be the sum of all the Temperley-Lieb
elements of $P_m$. The nomalised family $\tau_k = \delta^{-k}Tr_k$ of
traces on $Gr_k(P)$ is then consistent with the inclusions.

\begin{theorem}[see \cite{GnnJnsShl2008}]
  For each $k \geq 0$, the trace $\tau_k$ is a faithful, positive
  trace on $Gr_k(P)$. If $M_k$ denotes the von Neumann algebra
  generated by $Gr_k(P)$ in the GNS representation afforded by
  $\tau_k$, there is a tower $M_0 \subseteq M_1 \subseteq M_2
  \subseteq \cdots $ of $II_1$-factors which is the basic construction
  tower of the extremal subfactor $M_0 \subseteq M_1$
  which has index $\delta^2$  and planar algebra isomorphic to $P$.\qed
\end{theorem}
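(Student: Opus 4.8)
The plan is to dispatch the three assertions about $\tau_k$ first and then assemble the tower. Traciality of $Tr_k$ is entirely diagrammatic: the tangle computing $Z_T((a\bullet b)\otimes T_{m+n-2k})$ can be carried to the one computing $Z_T((b\bullet a)\otimes T_{m+n-2k})$ by a planar isotopy that rotates the closed-up annular picture, dragging the box $a$ around the outer boundary past $b$. Since the Temperley--Lieb sums $T_m$ are rotationally invariant, the two closed diagrams agree, and bilinearity gives $Tr_k(xy)=Tr_k(yx)$ on all of $Gr_k(P)$. That $\dagger$ (via $a^\dagger=Z_D(a^*)$) is an involution for which $Tr_k$ is self-adjoint is a similar isotopy check.

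The crux is positivity and faithfulness, which I would establish through a free-probabilistic model. Consider the sesquilinear form $\langle a,b\rangle_k=\tau_k(b^\dagger\bullet a)$ on $Gr_k(P)$; unfolding the definitions, $\langle a,b\rangle_k$ is the scalar obtained by stacking $a$ above $b^*$ and closing up with the Temperley--Lieb sum. Expanding each $T_m$ into its planar diagrams rewrites every moment $\tau_k(a_1\bullet\cdots\bullet a_r)$ as a sum over noncrossing ways of joining the input boxes through cups and caps. I would recognize this as the moment--cumulant formula of free probability: build the full Fock space $\mathcal F$ over $\bigoplus_{n\ge k}P_n$ with the positive definite inner product inherited from $P$, let each homogeneous element act as the sum of the corresponding creation and annihilation operators, and verify that the vacuum expectation reproduces $\tau_k$ diagram by diagram. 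The Fock representation carries a manifestly faithful positive trace, so positivity and faithfulness of $\tau_k$ follow once the term-by-term match is confirmed. The positive definiteness of the covariance feeding the Fock construction rests on the positivity of the inner product of $P$ together with $\delta>1$; this matching of the planar expansion to the cumulant blocks is the step I expect to be the main obstacle, since it requires careful bookkeeping of how the Temperley--Lieb diagrams factor into irreducible pieces.

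With a faithful positive trace in hand, $M_k$ is the von Neumann algebra generated by the GNS image of $Gr_k(P)$, and the stated consistency $\tau_k=\delta^{-k}Tr_k$ with the inclusions $Z_I$ realizes $M_0\subseteq M_1\subseteq M_2\subseteq\cdots$ as a trace-preserving tower. Factoriality of each $M_k$ I would deduce from the same model, in which these algebras arise as operator-valued free-semicircular von Neumann algebras and hence are $II_1$-factors; alternatively one checks directly that the center is trivial.

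It remains to identify the tower as a basic construction of index $\delta^2$ with standard invariant $P$. I would exhibit the Jones projections $e_k\in M_{k+1}$ as explicit Temperley--Lieb elements of the graded algebra and verify diagrammatically the relations $e_k^2=e_k=e_k^\dagger$, $e_k x e_k=E_{M_{k-1}}(x)e_k$ for $x\in M_k$, and the Markov property $\tau(e_k)=\delta^{-2}$, so that $M_{k+1}=\langle M_k,e_k\rangle$ is the basic construction of $M_{k-1}\subseteq M_k$ with $[M_1:M_0]=\delta^2$. Extremality follows because the constructed trace is the Markov trace, and a computation of the relative commutants $M_0'\cap M_k$, read off the planar pictures, recovers the spaces $P_n$ with their planar structure, giving the asserted isomorphism of the planar algebra of $M_0\subseteq M_1$ with $P$.
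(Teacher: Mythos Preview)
The paper does not prove this theorem at all: note the attribution ``see \cite{GnnJnsShl2008}'' in the header and the \qed\ placed immediately after the statement. In this paper the GJS theorem is simply quoted as background from \cite{GnnJnsShl2008} (see also \cite{JnsShlWlk2008} and \cite{KdySnd2008}), so there is no ``paper's own proof'' to compare your attempt against.

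For what it is worth, your sketch is broadly in the spirit of the original GJS argument and its companions: traciality by isotopy, positivity via a Fock-space/free-probabilistic model realizing the Temperley--Lieb moment expansion, and identification of the Jones tower by exhibiting the $e_k$ as Temperley--Lieb idempotents. The places you flag as delicate---matching the planar expansion to the moment--cumulant formula, and deducing factoriality---are indeed where the real work in \cite{GnnJnsShl2008}, \cite{JnsShlWlk2008}, \cite{KdySnd2008} lies, and your outline does not go beyond a plausibility sketch there. But since the present paper treats the theorem as a black box, no further comparison is possible.
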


\section{The planar algebra of a Kac algebra}\label{kac}

In this section we will review (from \cite{KdyLndSnd2003}) the main facts regarding the planar algebra $P(H)$ associated to a finite
dimensional Kac algebra $H$.

For the rest of the paper, fix a Kac algebra (= Hopf $C^*$-algebra)
$H$ of finite dimension $n > 1$. The structure maps of $H$
are denoted by $\mu,\eta,\Delta,\epsilon$ and $S$. Let $H^*$ be the
dual Kac algebra of $H$ and let
$\phi \in H^*$ and $h \in H$ denote the normalised traces
in the left regular representations of $H$ and $H^*$ respectively.
These are central projections that satisfy $ah = \epsilon(a)h$, 
$\phi\psi = \psi(1)\phi$  for all $a \in H$
and $\psi \in H^*$ and further, $\phi(h) = \frac{1}{n}$.

Let $\delta = \sqrt{n}$.
Associated to $H$ is a planar
algebra  $P = P(H)$ and defined to be the quotient of the
universal planar algebra on the labelling set $L = L_2 = H$
by the set of relations in 
Figure \ref{rels}
where, (i) we write 
the relations as identities - so the statement 
$a=b$ is interpreted as $a-b$ is a relation; (ii) $\zeta \in {\mathbb C}$  and  $a,b \in H$; and
(iii) the external boxes of all tangles appearing in the relations are left 
undrawn and it is assumed that all external $*$'s are at the top left corners.

\begin{figure}[!h]
\begin{center}
\psfrag{zab}{\huge $\zeta a + b$}
\psfrag{eq}{\huge $=$}
\psfrag{a}{\huge $a$}
\psfrag{b}{\huge $b$}
\psfrag{z}{\huge $\zeta$}
\psfrag{+}{\huge $+$}
\psfrag{del}{\huge $\delta^{-1}$}
\psfrag{1h}{\huge $1_H$}
\psfrag{h}{\huge $h$}
\psfrag{epa}{\huge $\epsilon(a)$}
\psfrag{eq}{\huge $=$}
\psfrag{delinphia}{\huge $\delta \phi(a)$}
\psfrag{a1}{\huge $a_1$}
\psfrag{a2b}{\huge $ba_2$}
\psfrag{sa}{\huge $Sa$}
\resizebox{12.0cm}{!}{\includegraphics{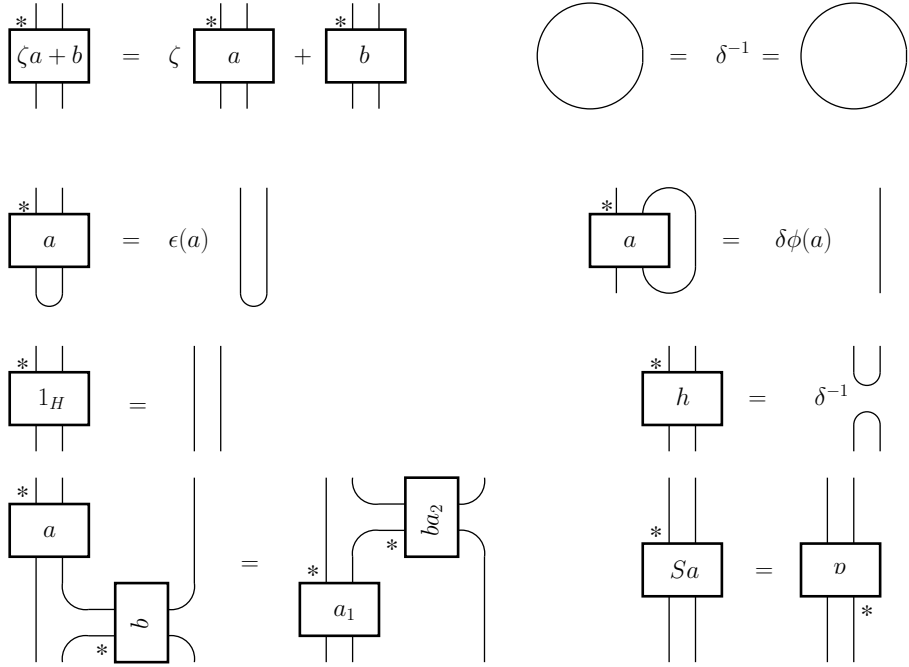}}
\end{center}
\caption{Relations in $P(H)$}
\label{rels}
\end{figure}

%

%


%

\begin{theorem}[Theorem 5.1 of \cite{KdyLndSnd2003}]\label{paha}
The planar algebra $P(H)$ is a subfactor planar algebra of modulus $\delta = \sqrt{n}$ and is the planar algebra of
the subfactor $M^H \subseteq M$ where $M$ is the hyperfinite
$II_1$-factor equipped with an outer action of $H$. There is a natural
identification of $H$ with $P_2$ under which the antipode $S$
of $H$ corresponds to the action $Z_{R}$ of the 2-rotation tangle $R = R^2_2$. \qed
\end{theorem}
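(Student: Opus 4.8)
The plan is to realize $P(H)$ concretely as the subfactor planar algebra of the fixed-point inclusion $M^H \subseteq M$ and then to show that the generators-and-relations presentation of Figure \ref{rels} captures exactly this planar algebra. First I would fix an outer action of $H$ on the hyperfinite $II_1$-factor $M$ and form the inclusion $M^H \subseteq M$; since $\dim H = n$ this has index $n$, so its planar algebra $\mathcal{Q}$ has modulus $\delta = \sqrt{n}$, which already settles the modulus claim. The relative commutants of the Jones tower of this subfactor admit an explicit description in terms of $H$ and $H^*$; in particular one identifies $\mathcal{Q}_2 \cong H$ as a $*$-algebra, with the Jones projection, the two conditional expectations, the unit, and the Haar trace $\phi$ all receiving diagrammatic descriptions.

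Next I would define a planar-algebra homomorphism $\Phi : P(H) \to \mathcal{Q}$. Because $P(H)$ is by construction the universal planar algebra on the label set $L_2 = H$ modulo the relations of Figure \ref{rels}, producing $\Phi$ amounts to specifying the map $H = L_2 \to \mathcal{Q}_2$ (the natural identification above) and verifying that each defining relation holds in $\mathcal{Q}$. This verification is the computational heart of the argument: one checks that the multiplication tangle reproduces $\mu$ read through $\Delta$ in Sweedler notation, that the capping and unit relations reproduce $\eta$ and $\epsilon$, that the closed-loop relation contributes the scalar $\delta$, and that the trace relation reproduces $\delta\phi$. Each of these is a finite local computation using the standard dictionary between subfactor data and the structure maps of $H$.

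To conclude that $\Phi$ is an isomorphism I would establish surjectivity and then match dimensions. Surjectivity follows from the fact that $\mathcal{Q}$ is generated as a planar algebra by $\mathcal{Q}_2 \cong H$, which for a fixed-point subfactor is a consequence of the form of its principal graph. For injectivity the key is to show that the relations are \emph{complete}: using them as rewriting rules one reduces an arbitrary element of $P(H)_k$ to a spanning set of ``canonical'' diagrams, thereby bounding $\dim P(H)_k$ from above by $\dim \mathcal{Q}_k$. Combined with surjectivity this forces $\Phi$ to be an isomorphism and transports the subfactor-planar-algebra structure (finite-dimensionality of each $P_k$, sphericity, and positivity of the partition function) back to $P(H)$.

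Finally, the antipode statement is a direct check on the generator: the $2$-rotation tangle $R = R^2_2$ acts on $\mathcal{Q}_2$ by a rotation of the distinguished intervals, and tracing this through the identification $\mathcal{Q}_2 \cong H$ shows that it implements $S$; this is checked on $H$ and then seen to be consistent with the relations. I expect the principal obstacle to be establishing completeness of the relations, that is, the diagrammatic reduction giving the upper bound $\dim P(H)_k \le \dim \mathcal{Q}_k$, since everything else decomposes into finitely many local computations whereas the reduction argument must handle arbitrarily complicated tangles in a uniform way.
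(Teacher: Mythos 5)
This theorem is recalled in the paper from \cite{KdyLndSnd2003} with no internal proof (hence the \qed on the statement), so the comparison must be with that reference; your outline reproduces essentially the same route taken there: verify the relations of Figure \ref{rels} in the planar algebra $\mathcal{Q}$ of $M^H \subseteq M$ to obtain a planar algebra map $\Phi$, get surjectivity from the fact that a depth-two subfactor planar algebra is generated by its $2$-boxes, and get injectivity by using the relations as rewriting rules to prove $\dim P(H)_k \le \dim \mathcal{Q}_k$ (both equal $n^{k-1}$, consistent with Proposition \ref{xprop}(3)), finishing with the direct check that the $2$-rotation implements $S$ on the generators. Your diagnosis that the completeness-of-relations reduction is the real technical burden is also correct.
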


We pause to remark that the convention regarding the labelling of boxes in multiplication
tangles in this paper agrees with that of \cite{GnnJnsShl2008}
and of \cite{KdySnd2008} but is opposite to that of \cite{KdyLndSnd2003} and so one of the relations here appears to be different from the corresponding one in \cite{KdyLndSnd2003}.

We will have occasion to use some other facts about $P(H)$
that depend on an explicit choice of basis for $H$.
Suppose that $\widehat{H^*}$
is a complete set of inequivalent irreducible $*$-representations of $H^*$; we will denote
a typical element of $\widehat{H^*}$ by $\gamma$ and its dimension
by $d_\gamma$. Then the set $\{\gamma_{pq} \in H : \gamma \in \widehat{H^*}, 1 \leq p,q \leq d_\gamma \}$ is a linear basis for $H$.

\begin{proposition}\label{xprop}
\begin{itemize}
\item[(1)]  Let $\gamma \in \widehat{H^*}$. Then, $\gamma_{pq}^* = S \gamma_{qp}$.
\item[(2)] The set $\{\widetilde{\gamma_{pq}} = \sqrt{d_\gamma} \gamma_{pq} : \gamma \in \widehat{H^*}, 1 \leq p,q \leq d_\gamma\}$ is an orthonormal basis of $H$
for the inner product defined by $\phi$.
\item[(3)] Let $X = X^n_{2,2,\cdots,2}$, $n \geq 2$ be the tangle illustrated in Figure \ref{xntangle}. A basis of $P_n$ 
is given by
\begin{figure}[!h]
\begin{center}
\psfrag{1}{\huge $1$}
\psfrag{2}{\huge $2$}
\psfrag{3}{\huge $3$}
\psfrag{n-2}{\huge $n-2$}
\psfrag{n-1}{\huge $n-1$}
\psfrag{cdots}{\huge $\cdots$}
\resizebox{10.0cm}{!}{\includegraphics{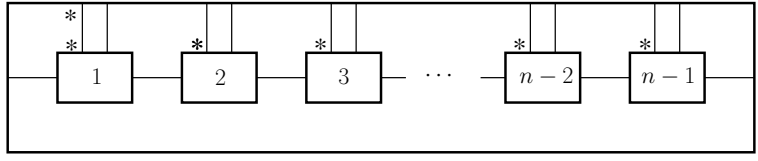}}
\end{center}
\caption{The tangle $X=X^n_{2,2,\cdots,2}$}
\label{xntangle}
\end{figure}
the set $\{Z_{X}(\gamma^1_{p_1q_1},\cdots,\gamma^{n-1}_{p_{n-1}q_{n-1}}) : \gamma^i \in \widehat{H^*}, 1 \leq p_i,q_i \leq d_{\gamma^i}\}$. In particular, $Z_X$
is an isomorphism.
\item[(4)] The relation in Figure \ref{deltarel} holds in $P(H)$ for any $\gamma \in \widehat{H^*}.$ \qed
\begin{figure}[!h]
\begin{center}
\psfrag{x1}{\huge $\gamma_{pq}$}
\psfrag{x2}{\huge $\gamma_{qt_k}^*$}
\psfrag{xn-1}{\huge $\gamma_{t_2t_1}^*$}
\psfrag{xn}{\huge $\gamma_{t_1p}^*$}
\psfrag{cdots}{\huge $\cdots$}
\psfrag{text}{\huge $= \sum_{t_1,\cdots,t_k}$}
\resizebox{10.0cm}{!}{\includegraphics{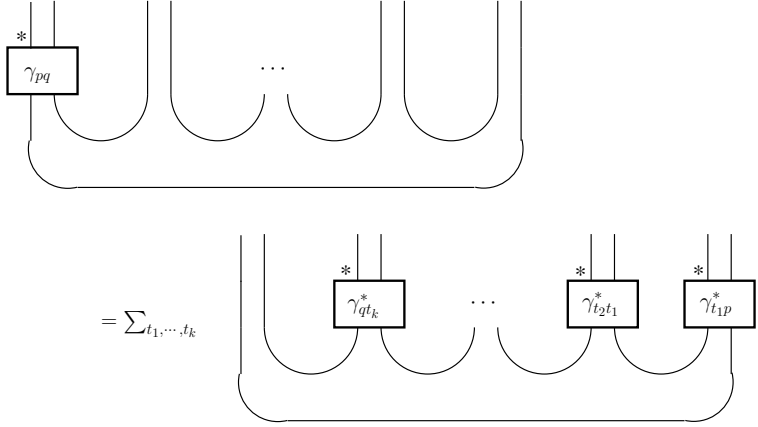}}
\end{center}
\caption{Useful relation in $P(H)$}
\label{deltarel}
\end{figure}
\end{itemize}
\end{proposition}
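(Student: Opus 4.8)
The plan is to regard $\gamma_{pq}$ as the matrix coefficients of the irreducible $*$-representations of the $C^*$-algebra $H^*$ and to read off (1)--(2) from Peter--Weyl theory for the Kac algebra, while (3)--(4) are internal identities forced by the generators-and-relations description recorded in Figure \ref{rels}. Throughout I would fix the pairing $\langle a,\psi\rangle = \psi(a)$ for $a\in H,\ \psi\in H^*$ and view $\gamma_{pq}\in H=(H^*)^*$ as the functional determined by $\langle\gamma_{pq},\psi\rangle=\gamma(\psi)_{pq}$. Part (1) is then a direct pairing computation: since each $\gamma$ is a $*$-representation one has $\gamma(\psi^*)=\gamma(\psi)^*$, and combining this with the standard Kac-algebra compatibilities $\langle Sa,\psi\rangle=\langle a,S\psi\rangle$ and $\langle a^*,\psi\rangle=\overline{\langle a,(S\psi)^*\rangle}$ gives $\langle\gamma_{pq}^*,\psi\rangle=\gamma(S\psi)_{qp}=\langle S\gamma_{qp},\psi\rangle$ for every $\psi$, whence $\gamma_{pq}^*=S\gamma_{qp}$. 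For (2) I would invoke the Schur orthogonality relations for the tracial Haar state $\phi$: these vanish across inequivalent irreducibles and, within a single $\gamma$, read $\phi((\gamma_{rs})^*\gamma_{pq})=d_\gamma^{-1}\delta_{pr}\delta_{qs}$, the factor $d_\gamma^{-1}$ being forced by unimodularity (the Kac condition). The normalisation $\widetilde{\gamma_{pq}}=\sqrt{d_\gamma}\,\gamma_{pq}$ cancels this factor, giving orthonormality for the inner product $\langle a,b\rangle=\phi(b^*a)$, and the family exhausts an orthonormal basis because $\sum_\gamma d_\gamma^2=\dim H^*=\dim H=n$.

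For (3) the goal is to show that $Z_X\colon P_2^{\otimes(n-1)}\to P_n$ is a linear isomorphism; granting this, the image of the product basis $\{\gamma^1_{p_1q_1}\otimes\cdots\otimes\gamma^{n-1}_{p_{n-1}q_{n-1}}\}$ of $P_2^{\otimes(n-1)}$ (a basis since $\{\gamma_{pq}\}$ is a basis of $H=P_2$) is precisely the asserted basis of $P_n$. Because each factor contributes $\sum_\gamma d_\gamma^2=n$ choices, the set has $n^{n-1}$ elements, matching $\dim P_n=n^{n-1}$ for the Kac-algebra planar algebra (Theorem \ref{paha} and \cite{KdyLndSnd2003}); so it suffices to prove that $Z_X$ is surjective or that it is injective. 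I would prove surjectivity: since $P(H)$ is generated as a planar algebra by $H=L_2\subseteq P_2$, every element of $P_n$ is a combination of $Z_T$ applied to $H$-labels, and the relations of Figure \ref{rels} (the $H$-multiplication and comultiplication relations together with the $\delta^{-1}$, $1_H$, $h$, $\epsilon$ and $\phi$ relations that cap off or absorb closed loops) allow one to isotope and reduce any such tangle to the standard ``comb'' shape $X$ of Figure \ref{xntangle}. Alternatively, injectivity could be obtained directly from the planar trace on $P_n$ together with the orthogonality of (2), by checking that the proposed elements are mutually orthogonal, hence independent.

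Finally, (4) is an internal identity proved from the same relations. The mechanism is the coproduct rule for matrix coefficients, $\Delta(\gamma_{pq})=\sum_t\gamma_{pt}\otimes\gamma_{tq}$, which is exactly the comultiplication relation of Figure \ref{rels} specialised to a $\gamma_{pq}$ label (it follows from $\gamma$ being a homomorphism, since $\langle\Delta\gamma_{pq},\psi\otimes\chi\rangle=\gamma(\psi\chi)_{pq}$). Iterating it $k$ times splits a single box into a chain summed over $t_1,\dots,t_k$; part (1) then rewrites each antipoded coefficient as a $*$-ed coefficient $\gamma^*_{\,\cdot\,\cdot}$, and the antipode (rotation) relation of Figure \ref{rels}, which realises $S$ as the $2$-rotation by Theorem \ref{paha}, reorients the strands into the cyclic index pattern $q\to t_k\to\cdots\to t_1\to p$ displayed in Figure \ref{deltarel}, with the summation inherited from the iterated coproduct.

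I expect the only genuine obstacle to be the structural claim in (3), namely the bijectivity of $Z_X$: parts (1), (2) and (4) are local algebraic identities that unwind mechanically once the matrix-coefficient picture and the relations are in place, whereas (3) requires a real normal-form (or orthogonality) argument to pass from ``the $\gamma_{pq}$ span $H$'' to ``their $X$-combs span all of $P_n$'', controlling the interaction of the defining relations with the planar structure. Pinning down $\dim P_n=n^{n-1}$ independently of this map, so as to avoid circularity, is the delicate point, and is where I would lean on \cite{KdyLndSnd2003}.
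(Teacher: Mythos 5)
Your proposal cannot be matched against a proof in the paper, because the paper offers none: Proposition \ref{xprop} is stated with a \qed and no argument, \S\ref{kac} being explicitly a review of facts established in \cite{KdyLndSnd2003}. Judged on its own terms, your reconstruction is correct in outline and is essentially the standard route. Parts (1) and (2) are, as you say, the Hopf $*$-algebra duality computation $a^*(\psi)=\overline{a((S\psi)^*)}$ together with Schur orthogonality for the Haar state; it is worth noting that traciality of $\phi$ (the Kac condition) is exactly what removes the modular factors, so that $\sqrt{d_\gamma}\,\gamma_{pq}$ comes out precisely orthonormal. Part (4) is indeed the iterated coproduct $\Delta(\gamma_{pq})=\sum_t\gamma_{pt}\otimes\gamma_{tq}$ (the exchange relation of Figure \ref{rels} specialised to a matrix-coefficient label), rewritten through (1) to produce the starred, cyclically indexed chain of Figure \ref{deltarel}. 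For part (3), the two issues you flag resolve exactly as you suspect: the dimension count $\dim P_n=n^{n-1}$ is not circular, because it follows from Theorem \ref{paha} --- $P(H)$ is the planar algebra of the irreducible, index-$n$, depth-$2$ subfactor $M^H\subseteq M$, whose relative commutants $\mathbb{C},\,H,\,H\rtimes H^*,\dots$ have dimensions $n^{k-1}$ independently of any choice of basis --- and given that count, either of your two routes closes the argument, since your proposed set has $(\sum_\gamma d_\gamma^2)^{n-1}=n^{n-1}$ elements: surjectivity of $Z_X$ by pushing every $2$-box into standard ``comb'' position using the multiplication, exchange and counit/trace relations (this normal-form argument is the one carried out in \cite{KdyLndSnd2003}), or injectivity by checking that distinct combs are orthogonal under the planar trace, which reduces via (4) and the $\phi$-relation to the orthogonality already proved in (2). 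So the only real labour is the normal-form (or orthogonality) computation you identified, and your strategy for it is the right one.
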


\section{Results from free probability theory}

The goal of this section is to give a very brief survey of free probability
theory and state the results that we will use in later sections.
We will use \cite{NcaSpc2006} and \cite{VclDykNca1992} as references.

\begin{definition}
An algebraic non-commutative
probability space consists of a unital algebra $A$ together with a linear
functional $\phi$ on $A$ such that $\phi(1) = 1$. 
It is said to be a $C^*$-algebraic probability space if $A$ is a $C^*$-algebra
and $\phi$ is a state, and to be a von Neumann algebraic probability
space if $A$ is a von Neumann algebra and $\phi$ is a normal state. 
\end{definition}

In this paper, all probability spaces we consider have tracial $\phi$.


%

\begin{definition}
If $(A,\phi)$ is a non-commutative probability space,
a family $\{A_i : i \in I\}$
of unital subalgebras of $A$ is said to be freely independent,
or simply free,
if for any positive integer $k$, indices $i_1,i_2,\cdots,i_k \in I$ such that
$i_1 \neq i_2, i_2 \neq i_3,\cdots, i_{k-1} \neq i_k$ and elements $a_t \in A_{i_t}$ with $\phi(a_t) = 0$ for $t = 1,2,\cdots,k$, the equality $\phi(a_1a_2\cdots a_t) =0$ holds. 
\end{definition}

In short, an alternating product
of centered elements is to be centered, with the obvious definitions.

\begin{definition}
If $\{(A_i,\phi_i) : i \in I\}$ is a family of algebraic non-commutative probability spaces, there is
a unique linear functional $\phi$ on the algebraic free product algebra $A = *_{i \in I} A_i$, such that
$\phi|_{A_i} = \phi_i$ and such that $\{A_i : i \in I\}$ (identified with
their images in $A$) is a freely independent family. The space
$(A,\phi)$ is said to be the free product of the family $\{(A_i,\phi_i) : i \in I\}$.
\end{definition}

There are notions of free products of $C^*$-algebraic
and von Neumann algebraic non-commutative
probability spaces  which require more work to define carefully and which we will
use without further explanation - see Chapter 1 of \cite{VclDykNca1992}.

In many contexts, it is important to decide whether a given
family of subalgebras of a non-commutative probability space
is a free family. For our purposes, the most convenient way to
do this is in terms of the free cumulants of the space which we will
now recall.

%


The lattice of non-crossing partitions plays a fundamental role
in the definition of free cumulants. Recall that for a totally ordered finite set $S$,
a partition $\pi$ of $S$ is said to be non-crossing if
whenever $i<j$ belong to a class of $\pi$ and $k<l$
belong to a different class of $\pi$, then it is not the case that
$k < i < l < j$ or $i < k < j < l$. The collection of non-crossing partitions of $S$, denoted
$NC(S)$, forms a lattice for the partial order defined by $\pi \geq  \rho$
if $\pi$ is coarser than $\rho$ or equivalently, if $\rho$ refines $\pi$.
The largest element of the lattice $NC(S)$  is denoted
$1_S$. Explicitly, $1_S = \{S\}$. If $S = [n] \stackrel{\mbox {\tiny {def}}}{=} \{1,2,\cdots,n\}$
for some $n \in {\mathbb N}$,  we will write
$NC(n)$ and $1_n$ for $NC(S)$ and $1_S$ respectively.

If $X$ is any set and $\{\phi_n : X^n \rightarrow {\mathbb C}\}_{n \in {\mathbb N}}$ is
a collection of functions, by the multiplicative extension of this collection, we will mean the collection of functions $\{\phi_\pi : X^n \rightarrow {\mathbb C}\}_{n \in {\mathbb N}, \pi \in NC(n)}$ defined by
$\phi_\pi(x^1,x^2,\cdots,x^n) = \prod_{C \in \pi} \phi_{|C|}(x^c : c \in C)$, where the arguments of each $\phi_{|C|}$ are listed with increasing
indices. Note that $\phi_n = \phi_{1_n}$. We now state a basic
combinatorial result (roughly equivalent to Proposition 10.21 of \cite{NcaSpc2006}) that we will refer to as M\"{o}bius inversion.
Let $\mu(\cdot,\cdot)$ be the M\"{o}bius function of the lattice $NC(n)$ - see Lecture 10 of \cite{NcaSpc2006}.

\begin{theorem}\label {Mobius}
Given two collections of functions $\{\phi_n : X^n \rightarrow {\mathbb C}\}_{n \in {\mathbb N}}$ and $\{\kappa_n : X^n \rightarrow {\mathbb C}\}_{n \in {\mathbb N}}$ extended multiplicatively, the following conditions are all equivalent:
\begin{itemize}
\item[(1)] $\phi_n = \sum_{\pi \in NC(n)} \kappa_\pi$ for each $n \in {\mathbb N}$.
\item[(2)] $\kappa_n = \sum_{\pi \in NC(n)} \mu(\pi,1_n) \phi_\pi$ for each $n \in {\mathbb N}$.
\item[(3)] $\phi_\tau = \sum_{\pi \in NC(n), \pi \leq \tau} \kappa_\pi$ for each $n \in {\mathbb N}, \tau \in NC(n)$.
\item[(4)] $\kappa_\tau = \sum_{\pi \in NC(n), \pi \leq \tau} \mu(\pi,\tau) \phi_\pi$ for each $n \in {\mathbb N}, \tau \in NC(n)$.
\end{itemize}
\end{theorem}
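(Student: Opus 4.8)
The plan is to derive all four equivalences from two standard structural facts about the lattice $NC(n)$: Möbius inversion in its incidence algebra, and the factorization of its lower intervals into products of smaller non-crossing partition lattices (Lectures 9 and 10 of \cite{NcaSpc2006}). First I would dispose of the equivalence (3) $\Leftrightarrow$ (4), which is purely the inversion step. For a fixed argument tuple $(x^1,\ldots,x^n)$, conditions (3) and (4) assert precisely that the two functions $\pi \mapsto \phi_\pi(x^1,\ldots,x^n)$ and $\pi \mapsto \kappa_\pi(x^1,\ldots,x^n)$ on the finite poset $NC(n)$ are related by the zeta transform $g(\tau) = \sum_{\pi \leq \tau} f(\pi)$ and by its inverse $f(\tau) = \sum_{\pi \leq \tau}\mu(\pi,\tau)g(\pi)$. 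Since $\mu$ is by definition the inverse of the zeta function in the incidence algebra of $NC(n)$, these two relations are equivalent; taking $g = \phi$, $f = \kappa$ and letting the tuple vary gives (3) $\Leftrightarrow$ (4) as an identity of functions.

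Because $1_n$ is the top element of $NC(n)$, the index set $\{\pi : \pi \leq 1_n\}$ is all of $NC(n)$, so (1) is exactly the instance $\tau = 1_n$ of (3) and (2) is exactly the instance $\tau = 1_n$ of (4). Hence (3) $\Rightarrow$ (1) and (4) $\Rightarrow$ (2) are immediate, and what remains is to promote each single identity at $\tau = 1_n$ to the family indexed by all $\tau$; this is where multiplicativity of the extensions enters. The key input is that for $\tau \in NC(n)$ with blocks $C_1,\ldots,C_r$, the lower interval $\{\pi \in NC(n) : \pi \leq \tau\}$ is canonically isomorphic, as a poset, to the product $NC(C_1) \times \cdots \times NC(C_r)$, the isomorphism sending $\pi$ to the tuple $(\pi|_{C_1},\ldots,\pi|_{C_r})$ of its restrictions to the blocks of $\tau$. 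Granting this, I would prove (1) $\Rightarrow$ (3) by direct expansion: writing $\phi_\tau = \prod_{C \in \tau}\phi_{|C|}(x^c : c \in C)$ and substituting (1) blockwise gives $\phi_\tau = \prod_{C \in \tau}\sum_{\sigma_C \in NC(C)}\kappa_{\sigma_C}$; multiplying out and reindexing each tuple $(\sigma_C)_{C}$ by the partition $\pi \leq \tau$ it assembles to, the multiplicativity of $\kappa$ identifies $\prod_C \kappa_{\sigma_C}$ with $\kappa_\pi$, so that $\phi_\tau = \sum_{\pi \leq \tau}\kappa_\pi$, which is (3).

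The same factorization forces multiplicativity of the Möbius function, namely $\mu(\pi,\tau) = \prod_{C \in \tau}\mu(\pi|_C, 1_C)$, and repeating the expansion with $\kappa_{|C|} = \sum_{\sigma_C}\mu(\sigma_C,1_C)\phi_{\sigma_C}$ in place of (1) yields (2) $\Rightarrow$ (4). Combined with the trivial reverse special cases already noted, this closes the loop and gives (1) $\Leftrightarrow$ (3) $\Leftrightarrow$ (4) $\Leftrightarrow$ (2). The routine parts are the incidence-algebra inversion and the two special-case reductions; the crux is the interval-factorization fact together with the bookkeeping that, under the induced bijection between partitions $\pi \leq \tau$ and tuples $(\sigma_C)_C$, the multiplicativities of $\kappa$ and of $\mu$ match $\prod_C \kappa_{\sigma_C}$ to $\kappa_\pi$ and $\prod_C \mu(\sigma_C,1_C)$ to $\mu(\pi,\tau)$ exactly. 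I expect verifying this alignment cleanly, rather than the formal manipulation of the sums, to be the main obstacle.
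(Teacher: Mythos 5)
Your proposal is correct and follows essentially the same route as the paper's proof: the specialization $\tau = 1_n$ gives $(3)\Rightarrow(1)$ and $(4)\Rightarrow(2)$, the blockwise expansion through the bijection between $\{\pi \in NC(n) : \pi \leq \tau\}$ and tuples of partitions of the blocks of $\tau$ (together with multiplicativity of $\kappa$, $\phi$ and of the M\"obius function) upgrades $(1)$ to $(3)$ and $(2)$ to $(4)$, and ordinary M\"obius inversion in $NC(n)$ gives $(3)\Leftrightarrow(4)$. The only cosmetic difference is that the paper writes out $(2)\Rightarrow(4)$ in detail and declares $(1)\Rightarrow(3)$ ``even easier,'' whereas you detail $(1)\Rightarrow(3)$ and treat $(2)\Rightarrow(4)$ as the variant with M\"obius factors inserted.
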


\begin{proof}[Sketch of Proof] Clearly $(3) \Rightarrow (1)$ and $(4) \Rightarrow (2)$ by taking $\tau = 1_n$. On the other hand, given $(2)$ and an arbitrary $\tau \in NC(n)$, we get:
\begin{eqnarray*}
\kappa_\tau(x^1,x^2,\cdots,x^n) &=& \prod_{C \in \tau} \kappa_{|C|}(x^c : c \in C)\\
               &=& \prod_{C \in \tau} \sum_{\pi_C \in NC(C)} \mu(\pi_C,1_C) \phi_{\pi_c}(x^c : c \in C)\\
               &=& \sum_{\pi \in NC(n), \pi \leq \tau} \mu(\pi,1_n) \phi_\pi(x^1,x^2,\cdots,x^n),
\end{eqnarray*}
where the last equality is a consequence of the natural bijection
between $\{\pi \in NC(n): \pi \leq \tau\}$ and collections $\{\{\pi_C \in NC(C)\}_{C \in \tau}\}$ given by $\pi = \cup_{C \in \tau} \pi_C$ under which
(i) $\phi_\pi(x^1,x^2,\cdots,x^n) = \prod_{C \in \tau} \phi_{\pi_C}(x^c: c \in C)$
and (ii) $\mu(\pi,1_n) = \prod_{C \in \tau} \mu(\pi_c,1_C)$.
This proves $(4)$ and so $(2) \Leftrightarrow (4)$.
An even easier proof shows that $(1) \Leftrightarrow (3)$.
Finally, $(3) \Leftrightarrow (4)$ by usual Mobius inversion in the
poset $NC(n)$.
\end{proof}

\begin{definition}
The free cumulants of a non-commutative probability space $(A,\phi)$
are the functions $\kappa_n : A^n \rightarrow {\mathbb C}$ 
associated as in Theorem \ref{Mobius} to the collection of functions
 $\{\phi_n : A^n \rightarrow {\mathbb C}\}_{n \in {\mathbb N}}$ defined by $\phi_n(a^1,\cdots,a^n) = \phi(a^1a^2\cdots a^n)$. 
\end{definition}
 
 The reason for their importance lies in the following theorem
of Speicher.

\begin{theorem}[Theorem 11.20 of \cite{NcaSpc2006}]\label{cumulant}
Let $(A,\phi)$ be a non-commutative probability space and $\{A_i : i \in I\}$ be a family of unital subalgebras of $A$
such that $A_i$ is generated as an algebra by $G_i \subseteq A_i$. This family is freely independent
iff for each positive integer $k$, indices $i_1,\cdots,i_k \in I$ that
are not all equal and elements $a_t \in G_{i_t}$ for $t = 1,2,\cdots,k$,
the equality $\kappa_k(a_1,a_2,\cdots,a_k) = 0$ holds. \qed
\end{theorem}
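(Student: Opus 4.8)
The plan is to prove the sharper principle behind the statement, namely that \emph{freeness of $\{A_i\}$ is equivalent to the vanishing of all mixed free cumulants}, and then to observe that it suffices to test this vanishing on the generating sets $G_i$. Here a cumulant $\kappa_k(a_1,\dots,a_k)$ is called \emph{mixed} when $a_t \in A_{i_t}$ with the $i_t$ not all equal. I would organise the argument as: (i) vanishing of mixed cumulants on the $A_i$ implies freeness (the easy direction, via the moment--cumulant expansion of Theorem \ref{Mobius}); (ii) freeness implies vanishing of mixed cumulants on the $A_i$ (the hard direction, via the uniqueness of the free product functional); and (iii) a passage between testing on all of $A_i$ and testing only on $G_i$.

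For (i), take $a_t \in A_{i_t}$ with consecutive indices distinct and $\phi(a_t)=0$; we must show $\phi(a_1\cdots a_k)=0$. By condition $(1)$ of Theorem \ref{Mobius}, $\phi(a_1\cdots a_k)=\sum_{\pi\in NC(k)}\kappa_\pi(a_1,\dots,a_k)$, and $\kappa_\pi=\prod_{C\in\pi}\kappa_{|C|}$. Any block $C$ whose elements carry different indices contributes a mixed cumulant, which vanishes by hypothesis; so only partitions all of whose blocks are \emph{monochromatic} (constant index on each block) survive. The combinatorial core is the lemma that a non-crossing partition of $[k]$ with monochromatic blocks but distinct consecutive colours must contain a singleton block: choosing an innermost nested interval forces a block of size one. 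For such a surviving $\pi$ the singleton $\{t\}$ gives a factor $\kappa_1(a_t)=\phi(a_t)=0$, so every term vanishes and $\phi(a_1\cdots a_k)=0$; hence the family is free.

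For (ii), I would argue by a matching/uniqueness device rather than by a direct induction. Define a modified cumulant family $\kappa^0$ on the algebra generated by $\bigcup_i A_i$ by keeping the genuine cumulants of each marginal $(A_i,\phi|_{A_i})$ and declaring every mixed cumulant to be zero, and let $\psi$ be the functional it determines through the moment formula of Theorem \ref{Mobius}. By part (i), $\{A_i\}$ is free with respect to $\psi$, and by construction $\psi|_{A_i}=\phi|_{A_i}$. Since $\phi$ itself makes $\{A_i\}$ free with the same marginals, the uniqueness of the functional on the free product that restricts correctly and keeps the family free (the uniqueness asserted in the definition of the free product) forces $\psi=\phi$. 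By the bijectivity of the moment--cumulant correspondence in Theorem \ref{Mobius}, the cumulants of $\phi$ are exactly $\kappa^0$, whose mixed part is zero. The delicate point here---and the main obstacle of the whole proof---is to check that the prescription $\kappa^0$ actually defines a consistent linear functional $\psi$ (equivalently, that it is compatible with multiplying two arguments lying in the same $A_i$); this compatibility is precisely the content of the formula for free cumulants with products as entries, and it is the part I expect to require real work.

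It remains to handle (iii). Testing on all of $A_i$ trivially implies testing on $G_i\subseteq A_i$, giving the forward implication of the theorem from (ii). For the converse I would use the same product formula (Theorem 11.12 of \cite{NcaSpc2006}): since every element of $A_i$ is a linear combination of the unit and of products of elements of $G_i$, multilinearity of $\kappa_k$ reduces a general mixed cumulant to cumulants whose entries are such products, together with the standard fact that a cumulant of order $\ge 2$ with a unit entry vanishes. Expanding each product turns $\kappa_k$ into a sum of $\kappa_\pi$ over non-crossing $\pi$ that connect all the product-blocks; in any term either some block of $\pi$ mixes two indices---hence vanishes by the hypothesis on generators---or every block is monochromatic, and then connectedness forces all the outer indices $i_1,\dots,i_k$ to coincide, contrary to the assumption. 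Thus vanishing on generators propagates to all of $A_i$, and combining with (i) yields freeness, completing the equivalence.
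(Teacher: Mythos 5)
First, a point of comparison: the paper does not actually prove this statement --- it is quoted, with a \qed, as Theorem 11.20 of \cite{NcaSpc2006} --- so your attempt can only be judged on its own terms, against the standard Nica--Speicher argument. On those terms, your parts (i) and (iii) are correct. Part (i) is the standard easy direction: mixed blocks die by hypothesis, a non-crossing partition with monochromatic blocks and distinct consecutive colours must have an interval block which is then forced to be a singleton, and the singleton contributes $\kappa_1(a_t)=\phi(a_t)=0$. Part (iii) is exactly how Nica--Speicher deduce Theorem 11.20 from the subalgebra version: expand entries into units plus products of generators, kill units with the fact that $\kappa_k(\ldots,1,\ldots)=0$ for $k\geq 2$, expand products via the products-as-arguments formula (their Theorem 11.12), and note that a connecting partition with monochromatic blocks would force all outer indices to agree.

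The genuine gap is in part (ii), and you have located it yourself but not closed it. The family $\kappa^0$ is prescribed only on tuples from $\bigcup_i A_i$, so the moment formula of Theorem \ref{Mobius} assigns values only to \emph{formal} products $a_1\cdots a_k$ of such elements; an element of the algebra generated by the $A_i$ (or of the abstract free product) has many such representations, and one must check that all assignments agree --- equivalently, that $\kappa^0$ behaves correctly when two adjacent arguments lying in the same $A_i$ are multiplied together. You appeal to the products-as-arguments formula for this, but that formula is a theorem about the genuine cumulants of an \emph{already given} functional; for the prescribed family $\kappa^0$ it is precisely the assertion that needs proof, and proving it requires the same induction over non-crossing partitions as the direct proof of vanishing of mixed cumulants. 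So the uniqueness device does not buy anything: it relocates the whole difficulty into the unverified consistency of $\psi$, and as written the ``freeness $\Rightarrow$ vanishing'' direction is not established. The standard proof avoids the issue by never constructing a second functional: one works with $\phi$ itself, by induction on $k$ --- centre the arguments using $\kappa_k(\ldots,1,\ldots)=0$, handle the case of distinct consecutive indices by running your singleton-block argument through the moment-cumulant recursion (all terms $\pi\neq 1_k$ vanish by induction, leaving $\kappa_k=0$), and reduce the case of adjacent equal indices to it by grouping those arguments via Theorem 11.12. Since your part (iii) already cites Theorem 11.12, this direct induction would complete your proof with less machinery than the detour through $\psi$.
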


We also need a result of Nica and Speicher on `$R$-cyclic matrices'
- see Lecture 20 of
\cite{NcaSpc2006} - 
of a special type. Let $(A,\phi)$ be a non-commutative probability
space, $d \in {\mathbb N}$ and $(M_d(A),\phi^d)$ be the associated
matrix probability space where $\phi^d(X) = \frac{1}{d} \sum_i \phi(x_{ii})$ for $X = ((x_{ij})) \in M^d(A)$. 
Let $\kappa_*(\cdots)$ and $\kappa^d_*(\cdots)$ denote the free cumulants of $A$
and $M_d(A)$ respectively.

\begin{definition}
Call a matrix $X = ((x_{ij})) \in M_d(A)$ uniformly $R$-cyclic with determining sequence $\{\alpha_t \in {\mathbb C}\}_{t \in {\mathbb N}}$ if for any $i_1,j_1,i_2,j_2,\cdots,i_t,j_t \in \{1,2,\cdots,d\}$,
\begin{equation*}
 \kappa_t(x_{i_1,j_1},x_{i_2,j_2},\cdots,x_{i_t,j_t}) = 
 \left\{ 
\begin{array}{ll}
                   \alpha_t & \mbox{if } ~j_1 = i_2,j_2 = i_3,\cdots,j_{t-1} = i_t,j_t=i_1\\
                   0  & \mbox{otherwise}
        \end{array}
\right. ~.
 \end{equation*}
The adjective `uniform' refers to the fact that the cumulants are
independent of the indices $i_s,j_s$.
 \end{definition}

\begin{theorem}[Theorems 14.18 and 14.20 of \cite{NcaSpc2006}]\label{rcyclic}
Fix $X = ((x_{ij})) \in M_d(A)$. Let $A_1 = M_d({\mathbb C}) \subseteq M_d(A)$ and $A_2$ be the unital subalgebra of $M_d(A)$ generated
by $X$.
The following conditions are then equivalent:
\begin{itemize}
\item The matrix $X$ is uniformly $R$-cyclic with (some) determining sequence $\{\alpha_t\}_{t \in {\mathbb N}}.$
  \item $A_1$ and $A_2$ are free.
\end{itemize}
If these conditions hold, then $\kappa^d_t(X,X,\cdots,X) = 
d^{t-1} \alpha_t$.\qed
\end{theorem}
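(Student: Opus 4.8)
The plan is to avoid working directly with the scalar cumulants $\kappa^d_\ast$ of $M_d(A)$: since the normalised trace $\phi^d$ sums the matrix indices around a single cycle, it couples together the blocks of a non-crossing partition, and there is no clean closed form for $\kappa^d_n$ on general matrices. Instead I would pass to $M_d({\mathbb C})$-valued cumulants. Put $B = M_d({\mathbb C}) = A_1$ and let $E : M_d(A) \to B$ be the conditional expectation $E(((x_{ij}))) = ((\phi(x_{ij})))$, so that $\phi^d = \mathrm{tr}_d \circ E$; write $\kappa^E_\ast$ for the associated $B$-valued free cumulants. The backbone is the linking formula
\[ \bigl(\kappa^E_n(a^{(1)},\ldots,a^{(n)})\bigr)_{i_0 i_n} = \sum_{i_1,\ldots,i_{n-1}} \kappa_n(a^{(1)}_{i_0 i_1}, a^{(2)}_{i_1 i_2}, \ldots, a^{(n)}_{i_{n-1}i_n}), \]
expressing the matrix-valued cumulants of matrices as an index-path sum of the scalar cumulants of their entries. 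I would prove this by induction on $n$, comparing the $B$-valued moment-cumulant expansion of $E(a^{(1)}\cdots a^{(n)})$ with the entrywise scalar expansion of $\phi(\cdots)$ summed over the internal indices, and checking that the two sums over $NC(n)$ agree block by block; the delicate point is matching the nested insertions forced by the non-commutativity of $B$ with the index contractions on the right.

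With the linking formula available, the identity $\kappa^d_t(X,\ldots,X) = d^{t-1}\alpha_t$ becomes immediate under the $R$-cyclic hypothesis. The chain $(x_{i_0 i_1}, x_{i_1 i_2}, \ldots, x_{i_{t-1}i_t})$ is automatically cyclic except for the closing relation $i_t = i_0$, so uniform $R$-cyclicity forces each summand to equal $\alpha_t\,\delta_{i_0 i_t}$, giving $\kappa^E_t(X,\ldots,X) = d^{t-1}\alpha_t\cdot 1_B$. Applying $\mathrm{tr}_d$ (legitimate once $A_2$ is free from $B$, since then the scalar cumulants of $A_2$-elements are the traces of their $B$-valued cumulants) yields $d^{t-1}\alpha_t$.

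For the equivalence itself I would invoke Speicher's criterion (Theorem \ref{cumulant}): with $A_1$ generated by the matrix units $\{e_{pq}\}$ and $A_2$ by $X$, freeness holds iff every mixed scalar cumulant $\kappa^d_k(c_1,\ldots,c_k)$, with each $c_s \in \{X\}\cup\{e_{pq}\}$ and not all $c_s$ in one algebra, vanishes. The device that makes these computable is $e_{pp'} X e_{q'q} = x_{p'q'}\, e_{pq}$: interleaving matrix units between copies of $X$ collapses a product to a single matrix unit times an explicitly named entry of $X$, so that a mixed scalar cumulant becomes a combination of entry cumulants $\kappa_t(x_{i_1 j_1},\ldots,x_{i_t j_t})$ for \emph{prescribed} (not merely summed) index patterns. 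Tracking the non-crossing structure, one finds that all such mixed cumulants vanish precisely when the $\kappa_t(x_{i_1 j_1},\ldots)$ are supported on cyclic patterns $j_1=i_2,\ldots,j_t=i_1$ and take a value independent of the indices, i.e.\ exactly when $X$ is uniformly $R$-cyclic.

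The real obstacle is this last translation, uniformly in $t$. At the first level it is transparent (demanding $\kappa^d_2(e_{pq},X)=0$ for all $p,q$ forces $\phi(x_{ij})=\alpha_1\delta_{ij}$, which is the $t=1$ instance of uniform $R$-cyclicity, and conversely), but in general one must carefully match the non-crossing partitions of the interleaved word against the cyclic index-flow imposed by the matrix units, separating the genuine cyclic contributions, which must all coincide to yield the single constant $\alpha_t$, from the merely \emph{diagonal} ones. This is also where one sees why scalar-valuedness of $\kappa^E$ on $A_2$ alone is not enough, since $X = x\,1_B$ has scalar $B$-valued cumulants yet is not free from $B$, so that the interleaved matrix units are genuinely necessary rather than a convenience.
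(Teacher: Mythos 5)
The paper contains no proof of this statement: it is quoted, without proof, from \cite{NcaSpc2006}, so your attempt can only be measured against the standard textbook argument (Lecture 20 of that book) --- and your setup is in fact that argument's machinery. The linking formula you take as backbone is correct and is the key lemma there, and your derivation of $\kappa^E_t(X,\dots,X)=d^{t-1}\alpha_t\,1_B$ from uniform $R$-cyclicity is right. One repair: to get $\kappa^d_t(X,\dots,X)=d^{t-1}\alpha_t$ you should not invoke freeness at all; once every $\kappa^E_s(X,\dots,X)$ is a scalar multiple of $1_B$, the nested $B$-valued moment--cumulant formula for $\phi^d(X^m)=\mathrm{tr}_d\,E(X^m)$ collapses to a scalar multiplicative one, and uniqueness of cumulants (Theorem \ref{Mobius}) yields $\kappa^d_s(X,\dots,X)=\mathrm{tr}_d\,\kappa^E_s(X,\dots,X)$ outright.

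The genuine gap is the equivalence itself, which is the actual content of the theorem and which you reduce to a combinatorial matching that you concede is ``the real obstacle'' and do not perform. Two concrete things are missing. First, $e_{pp'}Xe_{q'q}=x_{p'q'}e_{pq}$ is an identity of \emph{products}, but cumulants are not multiplicative, so to exploit it inside $\kappa^d_k(c_1,\dots,c_k)$ you need the Krawczyk--Speicher formula for cumulants with products as arguments; the resulting sums of $\kappa_\pi$ over non-crossing partitions joining the factors are exactly the combinatorics you defer. Second, and more structurally, the entry cumulants $\kappa_t(x_{i_1j_1},\dots,x_{i_tj_t})$ live in $(A,\phi)$ while Speicher's criterion (Theorem \ref{cumulant}) is applied in $(M_d(A),\phi^d)$; the bridge is your own linking formula in interspersed form,
\[
\bigl(\kappa^E_t(Xe_{j_1i_2},Xe_{j_2i_3},\dots,Xe_{j_{t-1}i_t},X)\bigr)_{i_1j_t}
=\kappa_t(x_{i_1j_1},x_{i_2j_2},\dots,x_{i_tj_t}),
\]
valid for any $X$, which converts a \emph{prescribed} entry cumulant into a $B$-valued cumulant with matrix units inserted between the $X$'s. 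The missing ingredient is then the Nica--Speicher--Shlyakhtenko characterization of freeness over the scalars in operator-valued terms: $A_2$ is free from $B=M_d(\mathbb{C})$ if and only if $\kappa^E_n(a_1b_1,\dots,a_{n-1}b_{n-1},a_n)=\kappa^d_n(a_1,\dots,a_n)\,\mathrm{tr}_d(b_1)\cdots\mathrm{tr}_d(b_{n-1})\,1_B$ for all $a_i\in A_2$, $b_i\in B$. Granting that theorem, both implications follow from the display in two lines: freeness forces the left-hand side to equal $\kappa^d_t(X,\dots,X)\,d^{-(t-1)}\delta_{j_1i_2}\cdots\delta_{j_{t-1}i_t}\delta_{i_1j_t}$, which is precisely uniform $R$-cyclicity with $\alpha_t=d^{-(t-1)}\kappa^d_t(X,\dots,X)$, and conversely uniform $R$-cyclicity makes all interspersed cumulants factor as required. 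Without that theorem --- or without actually executing the equivalent scalar-cumulant combinatorics --- your proposal establishes only the final cumulant identity, not the equivalence. Your closing example $X=x1_B$ rightly shows why the interspersed cumulants are indispensable, but diagnosing the obstruction is not the same as removing it.
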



The results summarised so far have an algebraic/combinatorial
flavour. To get results about subfactors, we need some analytic 
input that is contained in the next few results. We use the following
notation and conventions. If $(A,\phi_A)$ and $(B,\phi_B)$ are non-commutative probability spaces and $0 < \alpha < 1$, by $\underset{\alpha}{A} \oplus
\underset{1-\alpha}{B}$, we will denote the non-commutative probability
space $(A \oplus B,\phi)$ where $\phi = \alpha \phi_A + (1-\alpha)\phi_B$. If $\alpha = 0$ (respectively $\alpha = 1$) then $\underset{\alpha}{A} \oplus
\underset{1-\alpha}{B}$ will denote $(B,\phi_B)$ (respectively $(A,\phi_A)$. If $A = LG$ is the von Neumann algebra of a countable group $G$, then we will regard
$A$ as a von Neumann algebraic tracial probability space with $\phi_A$ determined by $\phi_A(g) = \delta_{g1}$ for $g \in G$.
If $A$ is a finite factor, we regard $A$ as a von Neumann algebraic
probability space with $\phi_A = tr_A$ - the unique trace on $A$.


\begin{lemma}[Proposition 2.5.7 of \cite{VclDykNca1992}]\label{dc}
Let $(A,\phi)$ be a von Neumann algebraic non-commutative probability space and $\{A_i : i \in I\}$ be a family of unital *-subalgebras of $A$. Then $\{A_i : i \in I\}$ is a free family
iff $\{A_i^{\prime\prime} : i \in I\}$ is a free family.\qed
\end{lemma}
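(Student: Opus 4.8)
The plan is to prove the two implications separately, the reverse one being formal and the forward one carrying all the analytic content. Throughout I would write $B_i = A_i''$ for the von Neumann subalgebra generated by $A_i$, and recall that freeness of a family is the requirement that $\phi(c_1 c_2 \cdots c_k) = 0$ whenever each $c_t$ lies in one of the subalgebras of the family, $\phi(c_t) = 0$, and the subalgebras hosting consecutive $c_t$'s are distinct.

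First I would dispose of the implication $\{B_i\}$ free $\Rightarrow \{A_i\}$ free. Since each $A_i \subseteq B_i$ and both families see the same state $\phi$, an alternating centered product drawn from the $A_i$ is in particular an alternating centered product drawn from the $B_i$, so its $\phi$-value vanishes by hypothesis. No analysis is needed here.

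The substance is the converse. Assume $\{A_i\}$ is free and fix $x_1,\ldots,x_k$ with $x_t \in B_{i_t}$, $\phi(x_t) = 0$, and $i_1 \neq i_2,\ldots,i_{k-1}\neq i_k$; I must show $\phi(x_1\cdots x_k) = 0$. After rescaling I may assume each $x_t$ lies in the unit ball of $B_{i_t}$. By the Kaplansky density theorem the unit ball of the $*$-algebra $A_{i_t}$ is strong-* dense in that of $B_{i_t}$, so I can choose nets $b_t^{(\alpha)} \in A_{i_t}$ in the unit ball with $b_t^{(\alpha)} \to x_t$ in the strong-* topology. These need not be centered, but since the $A_{i_t}$ are unital I may replace them by $a_t^{(\alpha)} = b_t^{(\alpha)} - \phi(b_t^{(\alpha)})\,1 \in A_{i_t}$; normality of $\phi$ gives $\phi(b_t^{(\alpha)}) \to \phi(x_t) = 0$, so the $a_t^{(\alpha)}$ remain uniformly bounded, satisfy $\phi(a_t^{(\alpha)}) = 0$, and still converge strong-* to $x_t$. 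Freeness of $\{A_i\}$ then yields $\phi(a_1^{(\alpha)} \cdots a_k^{(\alpha)}) = 0$ for every $\alpha$.

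It remains to pass to the limit, and this is where the only real care is needed. Multiplication is jointly continuous for the strong-* topology on bounded sets, so $a_1^{(\alpha)} \cdots a_k^{(\alpha)} \to x_1 \cdots x_k$ strongly; since $\phi$ is normal it is $\sigma$-weakly continuous, and on the bounded net in question strong convergence forces convergence of $\phi$, whence $0 = \lim_\alpha \phi(a_1^{(\alpha)} \cdots a_k^{(\alpha)}) = \phi(x_1 \cdots x_k)$ as required. I expect the main technical nuisance to be the simultaneous net bookkeeping: strictly speaking one approximates one factor at a time, or indexes by a common directed set and invokes joint continuity of the $k$-fold product on bounded sets. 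It is cleanest to run the whole argument in the GNS space $(\pi,\mathcal H,\xi)$ of $\phi$, where strong-* convergence of the bounded net $a_t^{(\alpha)} \to x_t$ means that $\pi(a_t^{(\alpha)})$ and its adjoint converge strongly; then $\pi(a_1^{(\alpha)})\cdots\pi(a_k^{(\alpha)})\xi \to \pi(x_1\cdots x_k)\xi$, and pairing with the cyclic vector $\xi$ delivers the convergence of $\phi$-values directly.
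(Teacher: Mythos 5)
Your argument is correct: the trivial direction from $\{A_i''\}$ to $\{A_i\}$, and for the converse the Kaplansky density theorem (valid for unital $*$-subalgebras, giving strong-$*$ dense unit balls), recentering by $\phi(b_t^{(\alpha)})1$, joint strong continuity of multiplication on bounded sets, and normality of $\phi$ to pass to the limit — all the needed ingredients are in place, and the net bookkeeping you flag is handled adequately. Note that the paper itself gives no proof of this lemma (it is quoted with a \qed directly from Proposition 2.5.7 of \cite{VclDykNca1992}), and your argument is essentially the standard one found in that reference.
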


\begin{proposition}\label{poisson}
Let $(A,\phi)$ be a von Neumann algebraic non-commutative probability space with free cumulants $\kappa_n$ and
let $x \in A$ be a self-adjoint element such that $\kappa_n(x,x,\cdots,x) = \delta^{n-1}$ for a $\delta > 1$.
Let $B$ be the
von Neumann algebra generated by $x$ and set $\phi_B = \phi|_B$.
Then
$$
(B,\phi_B) \cong  
\underset{\ \ 1-\delta^{-1}}{{\mathbb C}} \oplus \underset{\ \ \delta^{-1}}{L{\mathbb Z}}.
$$
\end{proposition}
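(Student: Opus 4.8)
The plan is to identify the spectral distribution of $x$ explicitly and then read off the von Neumann algebra it generates. Since $x = x^*$ is self-adjoint, $B = W^*(x)$ is abelian and is isomorphic, as a tracial von Neumann algebraic probability space, to $(L^\infty(\mu), \int \cdot\, d\mu)$, where $\mu$ is the distribution of $x$, i.e.\ the compactly supported probability measure on ${\mathbb R}$ determined by $\int t^k\, d\mu(t) = \phi(x^k)$ (such a measure exists and has bounded support because $x$ is a bounded self-adjoint element of the von Neumann algebra $A$). Thus the entire content of the proposition is the determination of $\mu$ from the cumulant data $\kappa_n(x,\dots,x) = \delta^{n-1}$, together with the elementary observation that a measure which is an atom plus a diffuse part produces a direct sum ${\mathbb C} \oplus L{\mathbb Z}$. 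I note in passing that these cumulants are exactly those of the free Poisson (Marchenko--Pastur) law of rate $\delta^{-1}$ and jump size $\delta$, which already predicts the shape of the answer.

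First I would pass from cumulants to the Cauchy transform via the standard $R$-transform machinery of \cite{NcaSpc2006}. The hypothesis gives $R_x(z) = \sum_{n \geq 1} \kappa_n(x,\dots,x)\, z^{n-1} = \sum_{n\geq 1} (\delta z)^{n-1} = (1 - \delta z)^{-1}$, convergent for small $|z|$. Writing $K(z) = z^{-1} + R_x(z)$ for the inverse of the Cauchy transform $G = G_\mu$, the relation $K(G(z)) = z$ becomes, after clearing denominators, the quadratic $\delta z\, w^2 + ((1-\delta) - z)\, w + 1 = 0$ in $w = G(z)$. Solving and choosing the branch with $G(z) \sim z^{-1}$ as $z \to \infty$ yields
\[
G(z) = \frac{(z - 1 + \delta) - \sqrt{z^2 - 2(\delta+1)z + (\delta-1)^2}}{2\delta z},
\]
whose radicand factors as $\bigl(z - (\sqrt\delta - 1)^2\bigr)\bigl(z - (\sqrt\delta + 1)^2\bigr)$.

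Next I would recover $\mu$ by Stieltjes inversion. On the interval $[(\sqrt\delta-1)^2, (\sqrt\delta+1)^2]$ the radicand is negative, so $-\tfrac{1}{\pi}\,\mathrm{Im}\,G(t + i0^+)$ produces the absolutely continuous density $\tfrac{1}{2\pi\delta t}\sqrt{((\sqrt\delta+1)^2 - t)(t - (\sqrt\delta-1)^2)}$, of total mass $\delta^{-1}$; off this interval the only singularity is the pole at $z = 0$, whose residue $\mu(\{0\}) = \lim_{z\to 0} z\,G(z)$ gives the atom $1 - \delta^{-1}$. I expect the one genuinely delicate point to be the bookkeeping of the branch of the square root: the branch fixed by the asymptotics at $+\infty$ must be analytically continued around the cut to $z = 0$, where it takes the value $-(\delta - 1)$ rather than $+(\delta-1)$, and it is precisely this sign that makes the residue equal $1 - \delta^{-1}$ instead of $0$. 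Some care is likewise needed to confirm that the continuous part contributes no further atoms.

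Finally, with $\mu = (1 - \delta^{-1})\,\varepsilon_0 + \nu$, where $\varepsilon_0$ denotes the unit point mass at $0$ and $\nu$ is the absolutely continuous measure of total mass $\delta^{-1}$ found above, the splitting $L^\infty(\mu) \cong {\mathbb C} \oplus L^\infty(\hat\nu)$ is immediate, with $\hat\nu$ the renormalisation of $\nu$ to a probability measure. Since $\hat\nu$ is diffuse, $(L^\infty(\hat\nu), \int \cdot\, d\hat\nu)$ is isomorphic as a tracial probability space to $(L^\infty[0,1], \mathrm{Leb}) \cong (L{\mathbb Z}, \phi_{L{\mathbb Z}})$, all standard diffuse probability spaces being mutually isomorphic. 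Tracking the weights $1 - \delta^{-1}$ and $\delta^{-1}$ through this identification gives exactly $\underset{\ \ 1-\delta^{-1}}{{\mathbb C}} \oplus \underset{\ \ \delta^{-1}}{L{\mathbb Z}}$, as claimed.
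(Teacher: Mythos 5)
Your proof is correct, and its skeleton is the same as the paper's: identify the distribution of $x$ as the free Poisson (Marchenko--Pastur) law of rate $\delta^{-1}$ and jump size $\delta$ --- an atom of mass $1-\delta^{-1}$ at $0$ plus a diffuse part of mass $\delta^{-1}$ carried by an interval in $(0,\infty)$ --- then split the abelian algebra $L^\infty(\mu)$ along this decomposition and identify the diffuse corner with $L\mathbb{Z}$ (you do this via the mutual isomorphism of standard diffuse probability spaces, the paper via $L^\infty(S^1)$ and the Fourier transform; these are the same step). The one real difference is how the law is obtained: the paper observes that $\kappa_n(x,\dots,x)=\delta^{-1}\delta^{n}$ exhibits $x$ as a free Poisson variable with rate $\delta^{-1}$ and jump size $\delta$ and then simply quotes Proposition 12.11 of \cite{NcaSpc2006} for the form of its distribution, whereas you re-derive that proposition from scratch: $R_x(z)=(1-\delta z)^{-1}$, the quadratic $\delta z w^2+((1-\delta)-z)w+1=0$ for the Cauchy transform, Stieltjes inversion for the density, and the branch-tracking that gives $\lim_{z\to 0} zG(z)=1-\delta^{-1}$ for the atom. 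Your computations check out (the radicand does factor as $\bigl(z-(\sqrt{\delta}-1)^2\bigr)\bigl(z-(\sqrt{\delta}+1)^2\bigr)$, the minus branch is the one with $G(z)\sim z^{-1}$ at infinity, and the masses $1-\delta^{-1}$ and $\delta^{-1}$ sum to $1$), so the trade-off is clear: the citation spares the paper exactly the analytic bookkeeping (branch choice, moment-determinacy) that you correctly flag as the delicate points, while your route is self-contained and yields the explicit density. One caveat, which applies equally to both arguments: the identification of $W^*(x)$ with $L^\infty(\mu)$ as a probability space tacitly uses faithfulness of $\phi$ on $B$ (otherwise $W^*(x)$ can be strictly larger than what the distribution sees); this is harmless here since the proposition is applied to the GJS algebras, where $\tau_k$ is a faithful normal trace.
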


\begin{proof}
Recall that a self-adjoint element $a$ in a von Neumann algebraic
probability space is said to be
a free Poisson variable with rate $\lambda > 0$ and jump size $\alpha \in {\mathbb R}$ if $\kappa_t(a,a,\cdots,a) = \lambda \alpha^t$.
Thus our element $x$ is free Poisson
with rate $\delta^{-1}$ and jump-size $\delta$ and, by Proposition
12.11 of \cite{NcaSpc2006}, generates a 
von Neumann algebra isomorphic to $L^\infty({\mathbb R},\mu)$ where the
measure $\mu$ is of the form $(1-\delta^{-1})\nu_0 + \delta^{-1}\nu$ -
where $\nu_0$ is the point-mass at $0$ and $\nu$ is a probability measure
supported on an interval  $[a,b] \subseteq (0,\infty)$ that is mutually absolutely continuous
with respect to the Lebesgue measure. Under this isomorphism,
$\phi_B$ goes over to integration with respect to $\mu$.

Hence 
\begin{eqnarray*}
(B,\phi|_B) &\cong&
\underset{1-\delta^{-1}}{(L^\infty(\{0\},\nu_0),\int (\cdot)d\nu_0)} \oplus
\underset{\delta^{-1}}{((L^\infty([a,b],\nu),\int (\cdot)d\nu)}\\
&\cong&
\underset{1-\delta^{-1}}{({\mathbb C},id_{\mathbb C})} \oplus
\underset{\delta^{-1}}{((L^\infty(S^1,m),\int (\cdot)dm)}\\
&\cong& \underset{\ \ 1-\delta^{-1}}{{\mathbb C}} \oplus \underset{\ \ \delta^{-1}}{L{\mathbb Z}},
\end{eqnarray*}
where the last isomorphism uses the Fourier transform.
\end{proof}

Thus, Proposition \ref{poisson} determines the von Neumann algebraic
probability space generated by a 
free Poisson variable with rate $\delta^{-1}$ and jump size $\delta$. 

We now recall from \cite{Dyk1994} and \cite{Rdl1994} basic
properties of the interpolated free
group factors $LF(r)$ defined for $r > 1$. Set $LF(1) = L{\mathbb Z}$.
If $M$ is a finite factor and $\alpha > 0$, the $\alpha$-ampliation of $M$ (defined only for $\alpha$ being an integral multiple of ${\frac{1}{n}}$ if
$M$ is of type $I_n$)
denoted 
$M_\alpha$, - see
\cite{MrrNmn1943} - stands  for $pMp$ where $p \in M$ is
a projection of trace $\alpha$ if $\alpha <1$, for $M_n(M)$ if
$\alpha=n \in {\mathbb N}$, and satisfies $(M_{\alpha})_{\beta} \cong
M_{\alpha\beta}$ in general.

\begin{proposition}[Theorems 4.1 and 2.4 of \cite{Dyk1994}, Propositions 4.4 and 4.5 of \cite{Rdl1994}]\label{dykrdl}
Let $r,s > 1$ and $\alpha >0$.Then:
\begin{itemize}
\item[(1)] $LF(r)*LF(s) \cong
LF(r+s)$, and
\item[(2)] $LF(r)_\alpha \cong LF(\frac{r-1}{\alpha^2}+1).$\qed
\end{itemize}
\end{proposition}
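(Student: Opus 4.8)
The plan is to deduce both identities from Dykema's notion of \emph{free dimension}, which I would take as the organising invariant for the family $LF(r)$. Recall that one realises each $LF(r)$, $r \in (1,\infty)$, as a von Neumann algebraic free product
$$ LF(r) \;\cong\; \big( {*}_{i}\,(L^\infty[0,1], {\textstyle\int})\big) \; * \; \big( {*}_{j}\,(M_{n_j}({\mathbb C}), \mathrm{tr})\big) $$
of finitely many diffuse abelian algebras $L^\infty[0,1] \cong L{\mathbb Z}$ and finite-dimensional algebras, where free dimension is declared additive over free products, with $\mathrm{fdim}(L{\mathbb Z}) = 1$ and $\mathrm{fdim}(\oplus_k M_{n_k}({\mathbb C})) = 1 - \sum_k \alpha_k^2 n_k^{-2}$ for the trace with weights $\alpha_k$, and one arranges the constituents so that the total free dimension equals $r$. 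Both statements then reduce to two properties of this invariant: additivity under free products, and the transformation rule $\mathrm{fdim}(M_\alpha) = (\mathrm{fdim}(M)-1)/\alpha^2 + 1$ under ampliation. The genuine content that I would import wholesale from \cite{Dyk1994} and \cite{Rdl1994} is the \textbf{well-definedness theorem}: any two such free products with the same free dimension $r>1$ are isomorphic $II_1$ factors, equal to $L(\mathbb{F}_n)$ when $r = n \in {\mathbb N}$.

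Granting well-definedness, statement $(1)$ is immediate: presenting $LF(r)$ and $LF(s)$ as the displayed free products and then forming $LF(r)*LF(s)$ merely concatenates the two families of free generators, so the result is again a free product of the same type, whose free dimension is $r+s$ by additivity; hence it is $LF(r+s)$.

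For $(2)$ I would first treat integer ampliation $M_d(LF(r)) = LF(r)_d$ and then bootstrap. The crux is the ampliation formula $\mathrm{fdim}(M_d(M)) = (\mathrm{fdim}(M)-1)/d^2 + 1$, equivalently that $M_d(M)$ behaves like a free product of $M_d({\mathbb C})$ (free dimension $1 - d^{-2}$) with pieces that are free from it and carry total free dimension $\mathrm{fdim}(M)/d^2$. Here Theorem \ref{rcyclic} is exactly the structural engine: realising the generators of $M$ inside $M_d(M)$ as uniformly $R$-cyclic matrices certifies that $M_d({\mathbb C})$ is free from them, and the cumulant rescaling $\kappa^d_t = d^{t-1}\alpha_t$ is what produces the governing factor of $d^{-2}$ in the free-dimension bookkeeping; in the GJS application the relevant generators are free-Poisson elements supplied by Proposition \ref{poisson}. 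This yields $LF(r)_d \cong LF((r-1)/d^2 + 1)$, which is $(2)$ for $\alpha = d$. The compression case $\alpha = 1/d$ follows by inversion, since a projection $p$ of trace $1/d$ satisfies $M_d(pMp) \cong M$, so $\mathrm{fdim}(pMp) = (r-1)d^2 + 1$ as required; the relation $(M_\alpha)_\beta \cong M_{\alpha\beta}$ then propagates the identity to every rational $\alpha$, and one checks directly that the formula is consistent with this composition law.

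The hard part will be twofold. First, the well-definedness theorem underlying both $(1)$ and the reading of $M_d(M)$ as a free product is the deep technical core of \cite{Dyk1994} and \cite{Rdl1994}: it rests on Voiculescu's isomorphism $L^\infty[0,1]*L^\infty[0,1]\cong L(\mathbb{F}_2)$ together with a web of explicit free-product computations that let one trade diffuse generators for matrix units without changing the isomorphism class, and I would cite rather than reprove these. Second, passing from rational to arbitrary real $\alpha$ in $(2)$ cannot be achieved by the algebraic identities alone; it requires knowing that $r \mapsto LF(r)$ interpolates genuinely, so that the rationally-established identity extends by density to all $\alpha > 0$. That interpolation/continuity statement is precisely what the Dykema--R\u{a}dulescu construction is designed to provide, and is the real obstacle to a self-contained argument.
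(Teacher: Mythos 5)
The first thing to note is that the paper contains no proof of this proposition at all: it is stated with a terminal $\Box$ precisely because it is imported wholesale from the literature (Theorems 4.1 and 2.4 of \cite{Dyk1994}, Propositions 4.4 and 4.5 of \cite{Rdl1994}), and the rest of the paper uses it as a black box. So your sketch can only be judged as a reconstruction of the cited proofs. Your overall framework is the right one: realize $LF(r)$ as a free product of diffuse abelian pieces and finite-dimensional algebras, organize the bookkeeping by Dykema's free dimension, and import as the deep ingredient the well-definedness theorem (same free dimension implies isomorphic, with $LF(n) \cong L({\mathbb F}_n)$ for integer $n$). Granting that, your derivation of (1) by concatenating presentations and adding free dimensions is indeed essentially how Theorem 4.1 of \cite{Dyk1994} goes.

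Part (2) is where there is a genuine gap. Your plan is: integer ampliation, then trace-$\frac{1}{d}$ compression by inverting $M_d(pMp) \cong M$, then rational $\alpha$ via $(M_\alpha)_\beta \cong M_{\alpha\beta}$, and finally all real $\alpha$ "by density," appealing to an interpolation/continuity property of $r \mapsto LF(r)$. That last step is not an argument and cannot be made into one: isomorphism classes of $II_1$ factors carry no topology in which an identity verified at rational parameters propagates to irrational ones, and no such continuity statement exists in \cite{Dyk1994} or \cite{Rdl1994}. What those papers actually do is establish the compression formula for \emph{every} $\alpha > 0$ directly: Dykema compresses by a projection chosen inside a diffuse abelian free summand and recomputes the resulting free product decomposition with his lemmas, while for Radulescu the formula for arbitrary $\alpha$ is essentially built into the definition of $LF(r)$ as a compression $L({\mathbb F}_n)_\alpha$, the substance being well-definedness. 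So the irrational case must be cited (or proved) as such; the rational bootstrap plus a limiting step does not close the argument. A lesser inaccuracy: Theorem \ref{rcyclic} (the Nica--Speicher $R$-cyclic matrix theorem) is not "the structural engine" of the ampliation formula in the cited papers -- it postdates them and plays no role there; in this paper it is used only for the GJS-specific computations (Corollary \ref{useful}, Proposition \ref{mgamma}), with Proposition \ref{dykrdl} taken as given. This is an anachronistic misattribution rather than a circularity, but it misrepresents where the content of (2) lies.
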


The other analytic results we need are from \cite{Dyk1994} on computations of free products of tracial von Neumann algebraic
probability spaces.

\begin{proposition}[Proposition 1.7 of \cite{Dyk1994}]\label{dykprop}
Let $r,s \geq 1$ and $0 \leq \alpha,\beta \leq 1$. Then:
\begin{eqnarray*}
\lefteqn{
(\underset{1-\alpha}{{\mathbb C}} \oplus \underset{\alpha}{LF(r)})
*
(\underset{1-\beta}{{\mathbb C}} \oplus \underset{\beta}{LF(s)})
=}\\
& &
\left\{ 
\begin{array}{llr}
                   LF(r\alpha^2 + 2\alpha(1-\alpha) + s\beta^2 + 2\beta(1-\beta)) & {\text {if\ }} \alpha + \beta \geq 1 &  \\
                    \underset{1-\alpha -\beta}{{\mathbb C}} \oplus \underset{\alpha+\beta}{\underbrace{LF((\alpha+\beta)^{-2}(r\alpha^2+s\beta^2+4\alpha \beta))}} & {\text {if\ }} \alpha+\beta \leq 1. & \hskip.8in \Box
        \end{array} 
 \right.
\end{eqnarray*}
\end{proposition}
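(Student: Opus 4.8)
The plan is to split the computation into an ``atomic'' part and a ``diffuse'' part, governed by the relative position of the two minimal projections coming from the $\mathbb{C}$-summands. Write $A = \underset{1-\alpha}{\mathbb{C}} \oplus \underset{\alpha}{LF(r)}$ as $\mathbb{C}p \oplus eAe$, where $p$ is the atom of trace $1-\alpha$, $e = 1-p$ has trace $\alpha$ and $eAe \cong LF(r)$; realize $B$ similarly with atom $q$ of trace $1-\beta$ and $fBf \cong LF(s)$, $\tau(f)=\beta$. (The degenerate cases $\alpha=1$ or $\beta=1$ reduce at once to a plain free product of free group factors, handled by Proposition \ref{dykrdl}(1), so I assume $\alpha,\beta<1$.) By Lemma \ref{dc} it suffices to work with the generating $*$-subalgebras, and inside $A*B$ the projections $p$ and $q$ are free. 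I would first pin down the atoms and center of $A*B$ from the joint distribution of the free pair $(p,q)$, and then identify the remaining diffuse summand as an interpolated free group factor.

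For the atomic part, recall that two free projections of traces $1-\alpha$ and $1-\beta$ have meet $p\wedge q$ of trace $\max(1-\alpha-\beta,\,0)$. When $\alpha+\beta\leq 1$ this meet is nonzero, and since $A$ and $B$ act as scalars on $p$ and $q$ respectively ($pAp=\mathbb{C}p$, $qBq=\mathbb{C}q$), one finds that the corner $(p\wedge q)(A*B)(p\wedge q)$ collapses to $\mathbb{C}(p\wedge q)$; its central support splits off exactly the summand $\underset{1-\alpha-\beta}{\mathbb{C}}$. This is the same merging of scalar atoms already visible in free products of finite-dimensional algebras: two atoms of traces $a,b$ survive in the free product, as an atom of trace $a+b-1$, precisely when $a+b>1$, which here reads $\alpha+\beta<1$. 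When $\alpha+\beta\geq 1$ no such corner survives and $A*B$ is a factor, matching the first line of the statement.

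It remains to identify the diffuse summand and compute its free-group-factor parameter, which is the heart of the matter. I would pass to the corner of $A*B$ supported away from the atom (of trace $\alpha+\beta$ when $\alpha+\beta\leq 1$, and of trace $1$ when $\alpha+\beta\geq 1$) and realize the compressed algebra by an explicit matrix/$R$-cyclic model: amplify to $M_d(A*B)$ and use Theorem \ref{rcyclic} to recognize the generators of the factor-parts as free from a system of matrix units, thereby expressing the corner as built from compressions and free products of copies of $LF(r)$, $LF(s)$ and $LF(1)$. Feeding these through Proposition \ref{dykrdl}---the compression rule $LF(r)_t = LF(1+(r-1)t^{-2})$ and additivity $LF(r)*LF(s)=LF(r+s)$---and tracking the weights $\alpha,\beta$ yields the parameter $r\alpha^2 + 2\alpha(1-\alpha) + s\beta^2 + 2\beta(1-\beta)$ in the factor case and, after compressing by the atom-complement of trace $\alpha+\beta$, the parameter $(\alpha+\beta)^{-2}(r\alpha^2 + s\beta^2 + 4\alpha\beta)$ in the other case. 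A clean way to organize and check the bookkeeping is to attach to $\underset{1-\alpha}{\mathbb{C}} \oplus \underset{\alpha}{LF(r)}$ the quantity $r\alpha^2 + 2\alpha(1-\alpha)$ (Dykema's free dimension) and verify that it is additive under the free product in both regimes; additivity forces the two displayed parameters and confirms their consistency across the case boundary $\alpha+\beta=1$.

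The main obstacle is exactly this last identification. The atom count and the case split are soft consequences of the position of two free projections, but showing that the complementary summand is a \emph{single} interpolated free group factor---with no spurious extra diffuse pieces---and extracting its precise parameter requires genuine analytic input: an explicit free model for the compressed free product together with the interpolation and compression theorems packaged in Proposition \ref{dykrdl}. Making the $R$-cyclic reduction of Theorem \ref{rcyclic} carry the weights $\alpha,\beta$ correctly through the compression formula, and confirming that the free-dimension bookkeeping is not merely formal but computes the actual parameter, is where the real work lies.
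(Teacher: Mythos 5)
This proposition is not proved in the paper at all: it is imported verbatim from Dykema (Proposition 1.7 of \cite{Dyk1994}), and the terminal $\Box$ marks it as a black-box citation. So the only fair comparison is with Dykema's own argument, and there your proposal does capture the correct overall architecture: the atom of the free product is indeed $p\wedge q$ for the two free projections coming from the $\mathbb{C}$-summands (of trace $\max(1-\alpha-\beta,0)$), the diffuse part is identified via explicit (semicircular/circular) matrix models, and free dimension is the bookkeeping invariant; your consistency check across the boundary $\alpha+\beta=1$ is also correct.

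However, as a proof the proposal has a genuine gap, and it sits exactly where you candidly place ``the real work.'' First, the assertion that $(p\wedge q)(A*B)(p\wedge q)=\mathbb{C}(p\wedge q)$ is not a soft consequence of $pAp=\mathbb{C}p$ and $qBq=\mathbb{C}q$: compressing an alternating word of $A*B$ by $p\wedge q$ need not give a scalar, and minimality of $p\wedge q$ in all of $A*B$ (rather than merely in $W^*(p,q)$, where it follows from Voiculescu's two-projection computation) is part of the content to be established. Second, the device you propose to extract the parameters --- additivity of free dimension under free products --- is circular here: for the class of algebras in question, that additivity is a theorem of \cite{Dyk1993} and \cite{Dyk1994} whose proof requires precisely the matrix models and compression arguments you defer, so it can only serve as a consistency check, never as the derivation. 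Third, Theorem \ref{rcyclic} concerns a single uniformly $R$-cyclic matrix being free from the scalar matrix units $M_d(\mathbb{C})$; it does not by itself produce the decomposition of the compressed free product into compressions and free products of copies of $LF(r)$, $LF(s)$ and $L\mathbb{Z}$ that your outline calls for, nor does it rule out extra diffuse or atomic pieces in the complementary corner. In short: the strategy is the right one (it is Dykema's), but every step that distinguishes this proposition from a formal bookkeeping identity is stated as a goal rather than proved.
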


What we will actually use is the following corollary of Proposition \ref{dykprop}
which is easily proved by induction on $N$.

\begin{corollary}\label{dyk1} Let $\delta > 1$ and $N \in {\mathbb N}$. Then
$$
\hskip.8in
(\underset{\ \ 1-\delta^{-1}}{{\mathbb C}} \oplus \underset{\ \ \delta^{-1}}{L{\mathbb Z}})^{*N} =
\left\{ 
\begin{array}{llr}
                   LF(N(2\delta^{-1} - \delta^{-2})) & {\text {if\ }} N \geq \delta & \\
                    \underset{1-N\delta^{-1}}{{\mathbb C}} \oplus \underset{N\delta^{-1}}{LF(2-\frac{1}{N})} & {\text {if\ }} N \leq \delta. &
        \end{array}
\right.\hskip.45in \Box
$$
\end{corollary}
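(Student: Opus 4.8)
The plan is to induct on $N$, using Proposition \ref{dykprop} as the computational engine. The key preliminary observation is that $L\mathbb{Z} = LF(1)$, so the building block $\underset{1-\delta^{-1}}{\mathbb{C}} \oplus \underset{\delta^{-1}}{L\mathbb{Z}}$ is precisely of the form $\underset{1-\alpha}{\mathbb{C}} \oplus \underset{\alpha}{LF(r)}$ covered by that proposition, with $\alpha = \delta^{-1}$ and $r = 1$. The base case $N = 1$ is immediate: since $\delta > 1$ we have $1 < \delta$, so we are in the second branch, and $LF(2 - \tfrac{1}{1}) = LF(1) = L\mathbb{Z}$ with $\mathbb{C}$-weight $1 - \delta^{-1}$, matching the asserted formula.

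For the inductive step I would write $\left(\underset{1-\delta^{-1}}{\mathbb{C}} \oplus \underset{\delta^{-1}}{L\mathbb{Z}}\right)^{*(N+1)} = \left(\underset{1-\delta^{-1}}{\mathbb{C}} \oplus \underset{\delta^{-1}}{L\mathbb{Z}}\right)^{*N} * \left(\underset{1-\delta^{-1}}{\mathbb{C}} \oplus \underset{\delta^{-1}}{LF(1)}\right)$, insert the inductive hypothesis for the first factor, and apply Proposition \ref{dykprop} with the second factor contributing $\beta = \delta^{-1}$, $s = 1$. The analysis then splits according to the location of $N$ and $N+1$ relative to $\delta$. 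When $N+1 \leq \delta$, the hypothesis gives the first factor with $\alpha = N\delta^{-1}$, $r = 2 - \tfrac{1}{N}$; here $\alpha + \beta = (N+1)\delta^{-1} \leq 1$, so the second branch of Proposition \ref{dykprop} applies, and the free-dimension parameter $(\alpha+\beta)^{-2}(r\alpha^2 + s\beta^2 + 4\alpha\beta)$ should simplify to $\tfrac{2N+1}{N+1} = 2 - \tfrac{1}{N+1}$, with $\mathbb{C}$-weight $1 - (N+1)\delta^{-1}$, as required.

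When $N \geq \delta$ (so also $N+1 > \delta$) the hypothesis is the factor $LF\big(N(2\delta^{-1}-\delta^{-2})\big)$, which I would write as $\underset{0}{\mathbb{C}}\oplus\underset{1}{LF(\cdot)}$, i.e. with $\alpha = 1$; then $\alpha + \beta > 1$, the first branch applies, and $r\alpha^2 + 2\alpha(1-\alpha) + s\beta^2 + 2\beta(1-\beta)$ collapses to $(N+1)(2\delta^{-1}-\delta^{-2})$. The remaining transitional case $N < \delta \leq N+1$ uses the hypothesis with $\alpha = N\delta^{-1} < 1$, $r = 2 - \tfrac{1}{N}$; since $\alpha+\beta = (N+1)\delta^{-1} \geq 1$ the first branch again applies, and the same quantity should again collapse to $(N+1)(2\delta^{-1}-\delta^{-2})$, producing the factor $LF\big((N+1)(2\delta^{-1}-\delta^{-2})\big)$.

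I expect the only genuine subtlety—rather than an obstacle—to be the bookkeeping at the transitional case and at the boundary $N = \delta$, where one must check that the two branches of the claimed formula agree: at $N = \delta$ one has $N\delta^{-1} = 1$, so $\underset{1-N\delta^{-1}}{\mathbb{C}}\oplus\underset{N\delta^{-1}}{LF(2-\tfrac1N)}$ degenerates (via the paper's convention for weight $0$) to $LF(2 - \delta^{-1}) = LF\big(\delta(2\delta^{-1}-\delta^{-2})\big)$, which is exactly the $N \geq \delta$ value. Everything else is routine polynomial algebra in $N$ and $\delta^{-1}$; the substantive content is entirely carried by Proposition \ref{dykprop}.
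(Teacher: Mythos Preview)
Your proposal is correct and follows exactly the route the paper indicates: the paper simply states that the corollary ``is easily proved by induction on $N$'' from Proposition~\ref{dykprop}, and your argument is precisely that induction, with the case analysis and boundary check carried out in full. There is nothing to add.
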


\begin{proposition}[Lemma 3.4 of \cite{Dyk1994}]\label{dyk2}
Let $r \geq 1$ and $0 \leq \alpha \leq 1$ and $d \in {\mathbb N}$. Then:
\begin{eqnarray*}
\lefteqn{
(\underset{1-\alpha}{{\mathbb C}} \oplus \underset{\alpha}{LF(r)})
*M_d({\mathbb C}) =
}
 \\
& &\hskip .3in
\left\{ 
\begin{array}{ll}
                   LF(r\alpha^2+2\alpha(1-\alpha)+1-d^{-2}) & {\text {if\ }} \alpha \geq d^{-2} \\
                    \underset{1-\alpha d^2}{M_d({\mathbb C})} \oplus \underset{\alpha d^2}{LF(rd^{-4}-2d^{-4}+1+d^{-2})} & {\text {if\ }} \alpha \leq d^{-2}.
        \end{array}
\right. \hskip1in \Box
\end{eqnarray*}
\end{proposition}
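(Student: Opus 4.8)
The plan is to reduce the free product to a single corner, read off the distribution of that corner from freeness, and then feed the result into the free-product formulas already available. Write $N := (\underset{1-\alpha}{\mathbb{C}} \oplus \underset{\alpha}{LF(r)}) * M_d(\mathbb{C})$, set $B := \underset{1-\alpha}{\mathbb{C}} \oplus \underset{\alpha}{LF(r)}$, and fix matrix units $\{e_{ij}\}$ for the copy of $M_d(\mathbb{C})$ in $N$, with $p := e_{11}$ a minimal projection of trace $d^{-1}$. Since $M_d(\mathbb{C})$ is a type $I_d$ subfactor of $N$, the matrix units implement the canonical isomorphism $N \cong M_d(C)$, where $C := pNp$ and the map sends $x \mapsto ((e_{1i}xe_{j1}))$; under it the trace of $N$ becomes $tr_{M_d} \otimes \tau_C$ with $\tau_C$ the normalised trace on $C$. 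Thus the whole computation reduces to identifying $(C,\tau_C)$, after which one ampliates back: by Proposition \ref{dykrdl}(2) a factor corner $C \cong LF(s)$ yields $N \cong M_d(LF(s)) = LF(s)_d \cong LF(\tfrac{s-1}{d^2}+1)$, whereas a corner $C \cong \underset{1-w}{\mathbb{C}} \oplus \underset{w}{LF(s)}$ yields $N \cong \underset{1-w}{M_d(\mathbb{C})} \oplus \underset{w}{M_d(LF(s))}$. These are precisely the two shapes of answer in the statement, so everything hinges on $C$.

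To get at $C$, I would transport the free pair $(B, M_d(\mathbb{C}))$ through the isomorphism $N \cong M_d(C)$. Here $M_d(\mathbb{C})$ becomes the scalar matrices $M_d(\mathbb{C}) \subseteq M_d(C)$, while $B$ becomes the subalgebra generated by the images of a self-adjoint generating family of $B$, i.e.\ by a family of matrices over $C$. Because $B$ and $M_d(\mathbb{C})$ are free in $N$, Theorem \ref{rcyclic} (in its version for families, Lecture 20 of \cite{NcaSpc2006}) says exactly that this family is uniformly $R$-cyclic; its determining sequence is read off from the free cumulants of $B$, which are themselves determined by the $*$-distribution of $B$ --- namely $(1-\alpha)$ times a point mass at $0$ plus $\alpha$ times the distribution carried by the $LF(r)$ summand --- since the inclusion $B \hookrightarrow N$ is trace preserving. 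The entries of these matrices generate $C$, and uniform $R$-cyclicity pins down all of their joint free cumulants.

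It then remains to convert this cumulant data into an explicit von Neumann algebra. Using the structure theory of $R$-cyclic families from Lecture 20 of \cite{NcaSpc2006}, together with Lemma \ref{dc} to pass between $*$-algebras and the von Neumann algebras they generate, one identifies $C$ as a free product of algebras of the form $\underset{\cdot}{\mathbb{C}} \oplus \underset{\cdot}{LF(\cdot)}$ whose weights and ranks are dictated by the determining sequence. Evaluating this free product by repeated application of Proposition \ref{dykprop} (equivalently its Corollary \ref{dyk1}) produces the two branches: the factor/atomic dichotomy of Proposition \ref{dykprop} is exactly the dichotomy of the statement, with crossover where the atomic weight $1-\alpha d^2$ of $C$ vanishes, i.e.\ at $\alpha = d^{-2}$. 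A convenient bookkeeping check is Dykema's free-dimension heuristic, which is additive on free products that are factors: since $\mathrm{fdim}(M_d(\mathbb{C})) = 1-d^{-2}$ and $\mathrm{fdim}(B) = r\alpha^2 + 2\alpha(1-\alpha)$, additivity predicts the parameter $r\alpha^2 + 2\alpha(1-\alpha) + 1 - d^{-2}$ of the first branch; the second branch follows after ampliating the corresponding result for $C$ by $M_d$.

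The main obstacle is this last step: extracting from the uniform $R$-cyclic cumulant data an honest free-product decomposition of $C$ with the correct weights and free-group ranks. The $R$-cyclicity computation is transparent when $B$ is diffuse, but here the atomic summand $\underset{1-\alpha}{\mathbb{C}}$ of $B$ interacts with the matrix units in a way that must be tracked carefully, and it is precisely this interaction that forces the case distinction and locates the threshold at $\alpha = d^{-2}$. Once the decomposition of $C$ is in hand, the rest is routine bookkeeping with Propositions \ref{dykrdl} and \ref{dykprop}.
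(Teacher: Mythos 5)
First, a point of comparison: the paper does not prove this proposition at all --- it is imported verbatim, with attribution, as Lemma 3.4 of \cite{Dyk1994} (that is what the $\Box$ immediately after the statement signifies), and it is then used as a black box, for instance in the proof of Proposition \ref{mgamma}. So your outline must stand as an independent proof, and it does not: the gap you yourself call ``the main obstacle'' is not a technical loose end but the entire content of the lemma. The reduction to the corner $C = pNp$ with $N \cong M_d(C)$ is fine, and so is the observation that freeness of $B$ from $M_d({\mathbb C})$ transports to uniform $R$-cyclicity of the matrices over $C$ that represent generators of $B$. But by Theorem \ref{rcyclic} that uniform $R$-cyclicity is \emph{equivalent} to the freeness you started from, so at that stage you hold nothing beyond a list of joint free cumulants for a family generating $C$; no new information has been created. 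The decisive step --- ``using the structure theory of $R$-cyclic families from Lecture 20 of \cite{NcaSpc2006} \dots one identifies $C$ as a free product of algebras of the form ${\mathbb C} \oplus LF(\cdot)$'' --- appeals to a theorem that does not exist. Lecture 20 of Nica--Speicher is purely combinatorial: it characterizes $R$-cyclicity by cumulants and by freeness with amalgamation over the diagonal, and it never identifies the von Neumann algebra generated by the entries of an $R$-cyclic family. Passing from prescribed joint cumulants to a $W^*$-isomorphism class is analytic work: it is available for a single self-adjoint element (Proposition \ref{poisson}, via the explicit free Poisson distribution), but no multivariable analogue is among the results quoted in this paper, and supplying one is exactly the problem you were asked to solve.

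Nor can the free-dimension ``check'' close this gap: free dimension is a bookkeeping heuristic (it is not known to be a well-defined invariant of a $II_1$ factor), so additivity predictions are mnemonics, not arguments. Dykema's actual proof of Lemma 3.4 proceeds not through cumulants but through explicit matrix models and induction, using the free products of hyperfinite and finite-dimensional algebras computed in \cite{Dyk1993}; in effect your outline converts the statement into an equivalent statement about the corner $C$ and then asserts the answer there, with the dichotomy at $\alpha = d^{-2}$ read off from the desired conclusion rather than derived. One further caution should you attempt an honest completion: since the statement \emph{is} Lemma 3.4 of \cite{Dyk1994}, you may use Proposition \ref{dykprop} (Proposition 1.7 there) and the compression formula of Proposition \ref{dykrdl}(2) (Theorem 2.4 there), which precede it, but you should avoid the addition formula of Proposition \ref{dykrdl}(1) (Theorem 4.1 there), which is proved later in that paper; leaning on it risks circularity.
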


\begin{proposition}[Special case of Theorem 4.6 of \cite{Dyk1994}]\label{dyk3}
Let $A$ be a finite-dimensional von Neumann algebra
and $\phi$ be the normalised trace on $A$ in its left regular
representation (so that each central minimal projection of $A$
has trace $\frac{1}{n}$).
Suppose that $\frac{1}{n} \leq \alpha \leq 1$. Then,
$$
\hskip 1.3in
(\underset{1-\alpha}{{\mathbb C}} \oplus \underset{\alpha}{L{\mathbb Z}})
 * A \cong LF(2\alpha - \alpha^2 +1-\frac{1}{n}).\hskip 1.5in \Box
$$
\end{proposition}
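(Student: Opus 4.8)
The plan is to deduce the proposition from Dykema's free-dimension calculus, of which Theorem 4.6 of \cite{Dyk1994} is the relevant part, by checking that both free factors belong to the admissible class and that the arithmetic of free dimensions produces the stated parameter. Write $A = \bigoplus_{i=1}^s M_{d_i}(\mathbb{C})$, so that $n = \dim A = \sum_{i=1}^s d_i^2$, and recall that for the regular trace the central projection onto the $i$-th block has trace $d_i^2/n$. Dykema assigns to each algebra in his class a \emph{free dimension}, with $\mathrm{fdim}(M_d(\mathbb{C})) = 1 - d^{-2}$, $\mathrm{fdim}(L\mathbb{Z}) = \mathrm{fdim}(LF(1)) = 1$, and, for a weighted direct sum, $\mathrm{fdim}(\bigoplus_i \underset{\gamma_i}{B_i}) = 1 + \sum_i \gamma_i^2(\mathrm{fdim}(B_i) - 1)$; the central fact I would invoke is that free dimension is additive under free products within this class.

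Second, I would run the two free-dimension computations. For $A$ with the regular trace the block weights are $\gamma_i = d_i^2/n$, whence
\[ \mathrm{fdim}(A) = 1 + \sum_{i=1}^s (d_i^2/n)^2(1 - d_i^{-2} - 1) = 1 - \frac{1}{n^2}\sum_{i=1}^s d_i^2 = 1 - \frac1n, \]
independently of the block structure. For $B = \underset{1-\alpha}{\mathbb{C}} \oplus \underset{\alpha}{L\mathbb{Z}}$, since $\mathrm{fdim}(\mathbb{C}) = 0$ and $\mathrm{fdim}(L\mathbb{Z}) = 1$,
\[ \mathrm{fdim}(B) = 1 + (1-\alpha)^2(0-1) + \alpha^2(1-1) = 1 - (1-\alpha)^2 = 2\alpha - \alpha^2. \]
Additivity then gives $\mathrm{fdim}(B * A) = 2\alpha - \alpha^2 + 1 - \tfrac1n$, which is exactly the parameter of the target free group factor; it remains only to identify $B * A$ as a single interpolated free group factor rather than a proper direct sum.

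For the single-block case $A = M_d(\mathbb{C})$ (so $n = d^2$) this identification is already available and serves as a base case: taking $r = 1$ and $L\mathbb{Z} = LF(1)$ in Proposition \ref{dyk2}, the hypothesis $\alpha \geq 1/n = d^{-2}$ lands us in the first alternative, which returns $LF(\alpha^2 + 2\alpha(1-\alpha) + 1 - d^{-2}) = LF(2\alpha - \alpha^2 + 1 - \tfrac1n)$, matching the claim. For general $A$ I would argue that the diffuse, freely independent copy of $L\mathbb{Z}$ of weight $\alpha$ forces $B * A$ to be a $II_1$ factor precisely when $\alpha \geq 1/n$, at which point additivity of free dimension pins it down as $LF(2\alpha - \alpha^2 + 1 - \tfrac1n)$; the complementary régime $\alpha < 1/n$ (not needed here) would instead leave an atomic summand, exactly as in the second alternatives of Propositions \ref{dyk2} and \ref{dykprop}.

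The hard part is this last step: showing that the free product has trivial center (no surviving matricial or abelian atoms) exactly under the threshold $\alpha \geq 1/n$, and that the free-dimension bookkeeping for the multi-block direct sum is legitimate. This is the genuine content of Theorem 4.6 of \cite{Dyk1994}, proved there by the free-probabilistic absorption arguments that let a sufficiently heavy diffuse element swallow the atomic part. I would either cite it directly or reproduce its inductive skeleton, incorporating one matrix block at a time via Proposition \ref{dyk2} and reassembling the weights using the compression formula $LF(r)_\beta \cong LF(\tfrac{r-1}{\beta^2}+1)$ of Proposition \ref{dykrdl}; the remaining arithmetic is routine.
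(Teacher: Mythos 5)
Your proposal is correct and takes essentially the same route as the paper: the paper supplies no argument at all, presenting the statement purely as a special case of Theorem 4.6 of \cite{Dyk1994}, and your proof likewise reduces to invoking that theorem (or its proof machinery). The free-dimension bookkeeping you supply --- $\mathrm{fdim}(A) = 1 - \frac{1}{n}$ for the regular trace independently of the block structure, $\mathrm{fdim}(\underset{1-\alpha}{\mathbb{C}} \oplus \underset{\alpha}{L\mathbb{Z}}) = 2\alpha - \alpha^2$, additivity, and the observation that the regular trace makes the factoriality threshold $\alpha \geq \frac{1}{n}$ uniform over all matrix blocks --- is exactly the arithmetic needed to read off the stated parameter from Dykema's theorem, which the paper leaves implicit.
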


\section{Determination of $M_1$}

Let $H$ be a finite dimensional Kac algebra of dimension $n > 1$
and let $P = P(H)$ be its planar algebra. Let $M_0 \subseteq
M_1 \subseteq \cdots$ be the tower of factors associated
to $P$ by the GJS-construction, so that $M_k$ is the von Neumann
algebra generated by $Gr_k(P)$ in the GNS-representation
afforded by $\tau_k$. Our goal in this section is to prove the following theorem.

\begin{theorem}\label{main1} Let $H$ be a finite dimensional Kac algebra of dimension $n > 1$, $P = P(H)$ be its planar algebra and $M_0 \subseteq
M_1 \subseteq \cdots$ be the tower of factors associated
with $P$ by the GJS-construction. Then, $M_1 \cong LF(2\sqrt{n}-1)$.
\end{theorem}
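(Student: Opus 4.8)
The plan is to realise $M_1$ as a free product of $n$ copies of the probability space $P := \underset{1-\delta^{-1}}{{\mathbb C}} \oplus \underset{\delta^{-1}}{L{\mathbb Z}}$, where $\delta = \sqrt{n}$. Indeed, Corollary \ref{dyk1} applied with $N = n$ (note $n \geq \sqrt{n} = \delta$ since $n>1$) gives $P^{*n} \cong LF(n(2\delta^{-1} - \delta^{-2})) = LF(2\sqrt{n}-1)$, so it suffices to prove $M_1 \cong P^{*n}$. The $n$ copies of $P$ will be organised according to the irreducibles $\gamma \in \widehat{H^*}$, each $\gamma$ contributing $d_\gamma^2$ of them; this is consistent since $\sum_\gamma d_\gamma^2 = \dim H = n$.

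First I would reduce the generation problem to $P_2$. The degree-zero part of $Gr_1(P)$ is $P_1 \cong {\mathbb C}$, so all nontrivial generators sit in $P_2 \cong H$, and since by Proposition \ref{xprop}(3) every $P_m$ is spanned by elements $Z_X(\cdots)$ with inputs in $P_2 \cong H$ and the tangle $X$ is built from multiplication tangles, the algebra $P_2$ generates $M_1$ as a von Neumann algebra. Writing $B_\gamma$ for the von Neumann subalgebra of $M_1$ generated by $\{\gamma_{pq} : 1 \le p,q \le d_\gamma\}$, I would then show that $\{B_\gamma : \gamma \in \widehat{H^*}\}$ is a free family. By Speicher's criterion (Theorem \ref{cumulant}) together with Lemma \ref{dc}, this amounts to checking that the $\tau_1$-free-cumulants of the generators $\gamma_{pq}$ vanish on every tuple involving two distinct $\gamma$'s, which should follow from the orthogonality of matrix coefficients of inequivalent irreducibles once the cumulants are computed planar-algebraically. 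This yields $M_1 = *_{\gamma \in \widehat{H^*}} B_\gamma$.

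The heart of the argument is the identification of each $B_\gamma$. Here I would assemble the generators into the matrix $X^\gamma = ((\gamma_{pq})) \in M_{d_\gamma}(B_\gamma) \subseteq M_{d_\gamma}(M_1)$ and analyse it in the matrix probability space $(M_{d_\gamma}(M_1),\phi^{d_\gamma})$. The key computation is that $X^\gamma$ is uniformly $R$-cyclic: translating the trace $\tau_1$ of a $\bullet$-product of generators into a planar tangle and performing M\"obius inversion, the surviving contributions are exactly the cyclically-chained ones recorded by the relation of Figure \ref{deltarel} (Proposition \ref{xprop}(4)), which one finds forces $\kappa_t(\gamma_{p_1p_2},\gamma_{p_2p_3},\cdots,\gamma_{p_tp_1}) = (\delta/d_\gamma)^{t-1}$ with all other cumulants vanishing; the behaviour under $\dagger$ is governed by $\gamma_{pq}^* = S\gamma_{qp}$ together with the identification $S = Z_R$ of Theorem \ref{paha}. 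Theorem \ref{rcyclic} then simultaneously gives that $M_{d_\gamma}({\mathbb C})$ and $W^*(X^\gamma)$ are free in $M_{d_\gamma}(M_1)$ and that $\kappa^{d_\gamma}_t(X^\gamma,\cdots,X^\gamma) = d_\gamma^{t-1}(\delta/d_\gamma)^{t-1} = \delta^{t-1}$. By Proposition \ref{poisson}, $X^\gamma$ is then a free Poisson element generating a copy of $P$, so that $M_{d_\gamma}(B_\gamma) = W^*(M_{d_\gamma}({\mathbb C}),X^\gamma) \cong P * M_{d_\gamma}({\mathbb C})$. Evaluating this free product by Proposition \ref{dyk2} and comparing with the ampliation formula of Proposition \ref{dykrdl}(2) applied to $P^{*d_\gamma^2}$ (the two expressions agreeing in both the factor and the non-factor regime, according to whether $d_\gamma^2 \geq \delta$ or $d_\gamma^2 \leq \delta$), I would conclude $M_{d_\gamma}(B_\gamma) \cong M_{d_\gamma}(P^{*d_\gamma^2})$, and hence $B_\gamma \cong P^{*d_\gamma^2}$.

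Combining the two halves, $M_1 = *_{\gamma} B_\gamma \cong *_{\gamma} P^{*d_\gamma^2} \cong P^{*\sum_\gamma d_\gamma^2} = P^{*n} \cong LF(2\sqrt{n}-1)$, using associativity of the free product and Corollary \ref{dyk1}. I expect the main obstacle to be the planar cumulant computation underlying the uniform $R$-cyclicity of $X^\gamma$: one must carry out the M\"obius inversion from the $\tau_1$-moments to the free cumulants entirely within the planar calculus, show that only the cyclically-chained index patterns survive, extract the precise determining sequence $(\delta/d_\gamma)^{t-1}$ from the relation in Figure \ref{deltarel}, and at the same time keep track of the $\dagger$-structure so that $X^\gamma$ is genuinely the matrix of a self-adjoint free Poisson variable rather than merely an $R$-cyclic element. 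By contrast, the freeness of the $B_\gamma$ and the generation statement should be comparatively routine consequences of the same cumulant computation.
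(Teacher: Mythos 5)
Your proposal follows essentially the same route as the paper's: generation of $M_1$ by $P_2 \subseteq Gr_1(P)$, freeness of the subalgebras indexed by $\gamma \in \widehat{H^*}$ via Speicher's cumulant criterion and Lemma \ref{dc}, uniform $R$-cyclicity of $X(\gamma)=((\gamma_{pq}))$ with determining sequence $(\delta/d_\gamma)^{t-1}$, identification of $M_{d_\gamma}(B_\gamma)$ as $(\underset{1-\delta^{-1}}{{\mathbb C}} \oplus \underset{\delta^{-1}}{L{\mathbb Z}}) * M_{d_\gamma}({\mathbb C})$ via Theorem \ref{rcyclic}, Proposition \ref{poisson} and Proposition \ref{dyk2}, and reassembly through Corollary \ref{dyk1}. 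The one step you pass over silently --- deducing $B_\gamma \cong ({\mathbb C}\oplus L{\mathbb Z})^{*d_\gamma^2}$ from an isomorphism of the $d_\gamma \times d_\gamma$ matrix algebras over them --- is precisely what the paper's Lemma \ref{simple} (uniqueness of the tracial direct-sum decomposition, permitting compression by $\frac{1}{d}$) is inserted to justify, so modulo making that standard fact explicit your outline coincides with the paper's proof.
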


The strategy of proof is to find a free family $\{A(\gamma)\}_\gamma$of subalgebras of
$Gr_1(P)$ that generate it as an algebra (and hence also $M_1$ as a
von Neumann algebra), identify the von Neumann algebra $M(\gamma) =
A(\gamma)^{\prime\prime}$, and compute $*_\gamma M(\gamma)$, which is $M_1$.
To begin with, we determine the structure of $Gr_1(P)$.

Let $T(H) = \oplus_{n \geq 0} H^{\otimes n}$ be the tensor algebra
of the complex vector space $H$ regarded as a graded algebra
with $H^{\otimes n}$ being the degree $n$ piece. Define a $*$-structure on $T(H)$ by defining $(x^1 \otimes x^2 \otimes \cdots \otimes x^n)^* = S(x^n)^* \otimes \cdots \otimes S(x^2)^*
\otimes S(x^1)^*$, for $x^1,x^2,\cdots,x^n \in H$. Recall that
$S$ is the antipode of $H$ and corresponds  - see Theorem \ref{paha} -
to the rotation map $Z_R$ on $P_2$ (under the  
identification of $H$ with $P_2$).

\begin{proposition}\label{gr1p}
As graded $*$-algebras, $T(H)$ and $Gr_1(P)$ are isomorphic.
\end{proposition}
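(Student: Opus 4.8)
The plan is to write down the obvious candidate map degree by degree and then check that it respects the two pieces of structure, multiplication and $\dagger$. For the underlying graded vector space isomorphism, note that under the grading of $Gr_1(P)$ the summand $P_{m}$ sits in degree $m-1$ (since $a\in P_m$ contributes degree $m-1$ and the product $a\bullet b=Z_M(a,b)\in P_{m+n-1}$ has degree $(m-1)+(n-1)$), so the degree-$n$ piece of $Gr_1(P)$ is $P_{n+1}$, which should be matched with the degree-$n$ piece $H^{\otimes n}$ of $T(H)$. For $n=0$ this is the identification $P_1\cong{\mathbb C}=H^{\otimes 0}$ coming from $\dim P_1 = 1$; for $n\geq 1$ it is precisely the linear isomorphism $Z_X\colon H^{\otimes n}\to P_{n+1}$ furnished by Proposition \ref{xprop}(3) applied to $X=X^{n+1}_{2,\dots,2}$. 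I would define $\Phi\colon T(H)\to Gr_1(P)$ to be $Z_X$ on each graded piece; it is then a graded linear isomorphism by construction.

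Second, I would verify multiplicativity. Writing $\vec x=(x^1,\dots,x^m)$ and $\vec y=(y^1,\dots,y^n)$, the required identity is
\[
Z_M\big(Z_X(\vec x),Z_X(\vec y)\big)=Z_X(x^1,\dots,x^m,y^1,\dots,y^n),
\]
which is purely a statement about composition of tangles: substitute the two $X$-tangles into the input disks of the $Gr_1$-multiplication tangle $M=M(1)$, and verify by isotopy that the resulting tangle is the single tangle $X^{m+n+1}_{2,\dots,2}$. The concatenation of tensors on the $T(H)$ side is exactly reflected by the way the lone through-strand of the $k=1$ multiplication tangle stitches the two $X$-diagrams into one. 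The one point that needs care here is that this composition produces no closed loops, so that no spurious power of the modulus $\delta$ is introduced.

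Third, I would check that $\Phi$ intertwines the two $*$-structures, i.e. that $Z_X(\vec x)^\dagger=\Phi\big((x^1\otimes\cdots\otimes x^n)^*\big)$. Unwinding the left-hand side, $Z_X(\vec x)^\dagger=Z_D\big(Z_X(\vec x)^*\big)$, and the planar adjoint pushes the $*$ onto each input while reflecting the tangle, so the computation reduces to recognising the reflected-and-capped $X$-tangle as $X$ again but read in the opposite order. The reversal of the tensor factors is produced by the reflection in $Z_D$; the appearance of $S$ on each factor is exactly the statement that the $2$-rotation $Z_R$ implements the antipode (Theorem \ref{paha}), and the remaining $*$'s are accounted for using $\gamma_{pq}^*=S\gamma_{qp}$ (Proposition \ref{xprop}(1)). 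Assembling these should give $(Sx^n)^*\otimes\cdots\otimes(Sx^1)^*$ on the nose, which is the definition of $(x^1\otimes\cdots\otimes x^n)^*$ on $T(H)$.

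The main obstacle is entirely diagrammatic and lives in the second and third steps: one must carry out the tangle isotopies carefully enough to be sure that (a) composing $M(1)$ with the two $X$-tangles yields the larger $X$-tangle with no hidden closed strands, and (b) the rotations accumulated when reflecting the $X$-tangle through $Z_D$ combine to give precisely one antipode per strand together with the correct reversal of order, with no leftover rotation. Once these bookkeeping points are settled, the algebraic identities follow formally from Theorem \ref{paha} and Proposition \ref{xprop}, and $\Phi$ is the desired graded $*$-isomorphism.
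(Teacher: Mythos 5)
Your proposal is correct and follows essentially the same route as the paper's proof: define the map to be $Z_X$ on each graded piece, reduce multiplicativity to the tangle identity $M(1)\circ(X,X)=X$, reduce $\dagger$-compatibility to the reflection-plus-rotation identity (with $Z_R$ implementing the antipode $S$ via Theorem \ref{paha}), and conclude bijectivity from Proposition \ref{xprop}(3). The only cosmetic differences are that you handle the degree-zero piece explicitly and invoke Proposition \ref{xprop}(1), which is not actually needed for the $*$-computation.
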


\begin{proof} 
Define a graded map from $T(H)$ to $Gr_1(P)$ by letting its
restriction to $H^{\otimes (n-1)} \subseteq T(H)$ be $Z_X$ where
$X = X^{n}_{2,2,\cdots,2}$ as defined in Figure \ref{xntangle}. This map
is easily verified to be a $*$-algebra isomorphism. Indeed, multiplicativity amounts to checking that with $M = M(1)_{m,n}^{m+n-1}$, $M\circ_{(1,2)}(X^m_{2,2,\cdots,2},X^n_{2,2,\cdots,2}) =
X^{m+n-1}_{2,2,\cdots,2}$, while $*$-preservation is seen to
follow from $D\circ X^* = \sigma(X)\circ_{(1,2,\cdots,n-1)}(R,R,\cdots,R)$ where, $D = D(1)^{n}_{n}$,
$X=X^{n}_{2,2,\cdots,2}$, $\sigma$ is the order reversing
involution of $\{1,2,\cdots,n-1\}$ and $\sigma(X)$ is the tangle
$X$ with $i^{th}$-internal box numbered $\sigma(i)$ for each $i$.
Both these tangle facts are seen to hold by drawing the appropriate
pictures.

Finally, it is seen from Proposition \ref{xprop}(3) that this map yields
an isomorphism, as desired.
\end{proof}




%
Note that Proposition \ref{gr1p} implies that $Gr_1(P) $ is generated
as a unital algebra by $P_2 \subseteq Gr_1(P)$. We now regard
$Gr_1(P)$ together with its trace $\tau_1 = \delta^{-1}Tr_1$ as a non-commutative probability space. Denoting the free cumulants by $\kappa_*(\cdots)$,
we wish to compute these explicitly on the generators. This can be done
in greater generality as in Proposition \ref{freecumulants}.

\begin{proposition}\label{freecumulants}
Let $P$ be any subfactor planar algebra of modulus $\delta$ that is irreducible (i.e., $P_1 \cong {\mathbb C}$), and $(Gr_1(P),\tau_1)$ be the GJS-probability space associated
to it (as in the preceding paragraph). 
If $x^1,\cdots,x^t \in P_2 \subseteq Gr_1(P)$, then
$\kappa_t(x^1,x^2,\cdots,x^t)$ is given by the tangle in Figure
\ref{fig:kappafig}. 
\end{proposition}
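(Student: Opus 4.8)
The plan is to obtain $\kappa_t$ by first computing the moments $\phi_t(x^1,\dots,x^t) = \tau_1(x^1 \bullet \cdots \bullet x^t)$ as explicit tangles and then inverting via Theorem \ref{Mobius}. First I would unwind the structure maps of Figure \ref{strmaps}. Since each $x^i$ lies in $P_2 \subseteq Gr_1(P)$ and the multiplication tangle $M(1)^{3}_{2,2}$ joins its two inputs along a single through-strand, the product $x^1 \bullet \cdots \bullet x^t$ is the element of $P_{t+1}$ obtained by stringing the $t$ boxes into a chain along one side of the boundary. Applying $\tau_1 = \delta^{-1} Tr_1$ then caps this chain off using the trace tangle $T(1)^{0}_{t+1,t}$ fed the Temperley--Lieb element $T_t \in P_t$. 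Writing $T_t$ as the sum of all its Temperley--Lieb diagrams, the moment $\phi_t$ becomes a sum, over non-crossing pairings of the free boundary strands of the chain, of the scalars obtained by closing the chain up according to each pairing.

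The decisive step is to reorganize this sum over Temperley--Lieb closures as a sum over $NC(t)$. I would show that each Temperley--Lieb diagram occurring in the closure, together with the through-strands already joined by the chain, splits the boxes $\{x^1,\dots,x^t\}$ into the connected components of the resulting planar picture, and that this assignment is a bijection onto the partitions $\pi \in NC(t)$. Because every strand is closed up without crossings, the scalar attached to a given closure factorizes over the blocks of $\pi$: it equals $\prod_{C \in \pi} \beta_{|C|}(x^c : c \in C)$, where $\beta_j(y^1,\dots,y^j)$ is precisely the connected closure tangle of Figure \ref{fig:kappafig} that strings $j$ boxes of $P_2$ cyclically into a single loop. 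In other words, the family $\{\beta_j\}$ extended multiplicatively satisfies $\phi_t = \sum_{\pi \in NC(t)} \beta_\pi$ for every $t$.

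With this identity in hand the conclusion is immediate. Comparing $\phi_t = \sum_{\pi \in NC(t)} \beta_\pi$ with condition $(1)$ of Theorem \ref{Mobius}, and using that the collection $\kappa$ satisfying $(1)$ is unique (being recovered explicitly by $(2)$), the free cumulants must coincide with $\{\beta_j\}$. Hence $\kappa_t(x^1,\dots,x^t) = \beta_t(x^1,\dots,x^t)$ is exactly the tangle of Figure \ref{fig:kappafig}, as claimed. (As a sanity check, $t=1$ gives $\kappa_1(x^1)=\tau_1(x^1)$, the single-box closure, and $t=2$ gives $\kappa_2(x^1,x^2)=\phi_2(x^1,x^2)-\phi_1(x^1)\phi_1(x^2)$.)

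I expect the main obstacle to be the middle step: establishing both the bijection between Temperley--Lieb closures of the chain and non-crossing partitions of $[t]$, and the exactness of the factorization of the closure scalar over the blocks of $\pi$. The bijection is a planar-isotopy bookkeeping exercise — tracking which boxes end up on a common loop and checking that closed loops carrying no box are accounted for by the normalizing powers of $\delta$. The role of the irreducibility hypothesis $P_1 \cong {\mathbb C}$ enters precisely here: when two blocks meet or nest, they are separated by a region that is a $1$-box, i.e.\ an element of $P_1$, and irreducibility forces this to be a scalar, so that the contribution splits \emph{exactly} into a product of block scalars and no crossing partition survives. Once the relevant diagrams are drawn these verifications are routine, but correctly identifying each connected component with the tangle of Figure \ref{fig:kappafig} is where the genuine content lies.
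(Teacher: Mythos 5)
Your proposal is correct and follows essentially the same route as the paper's proof: express $\tau_1(x^1\cdots x^t)$ as a sum over $NC(t)$ via the Temperley--Lieb/non-crossing-partition bijection, use irreducibility ($P_1\cong{\mathbb C}$) to factorize each $\pi$-term over the blocks of $\pi$ into exactly the tangles of Figure \ref{fig:kappafig}, and then invoke M\"{o}bius inversion (Theorem \ref{Mobius}) to identify those tangles as the free cumulants. The only cosmetic difference is that the paper carries out your ``factorization over blocks'' step by inductively peeling off an interval class of $\pi$ (the sub-picture being a $1$-tangle, hence a scalar by irreducibility), whereas you phrase the same verification in terms of connected components of the closure.
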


\begin{figure}[!h]
\begin{center}
\psfrag{epa}{\huge $\epsilon(a)$}
\psfrag{eq}{\huge $=$}
\psfrag{cdots}{\huge $\cdots$}
\psfrag{d}{\huge $\delta^{-1}$}
\psfrag{x1}{\huge $x^1$}
\psfrag{x2}{\huge $x^2$}
\psfrag{xn-1}{\huge $x^{t-1}$}
\psfrag{xn}{\huge $x^t$}
\psfrag{+}{\huge $+$}
\psfrag{del}{\huge $\delta$}
\resizebox{7.0cm}{!}{\includegraphics{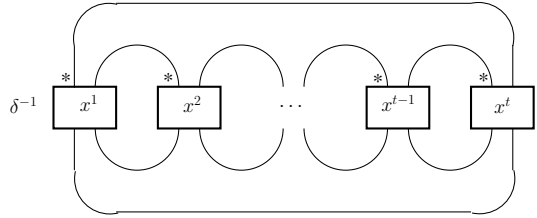}}
\end{center}
\caption{Identification of the free cumulants}
\label{fig:kappafig}
\end{figure}

Before proving this, we remind the reader of the well-known bijection
between non-crossing partitions and Temperley-Leib diagrams. We illustrate this in Figure \ref{tlncpcorr} with a
single example that should suffice.
\begin{figure}[!htb]
\psfrag{epa}{\huge $\epsilon(a)$}
\psfrag{eq}{\huge $=$}
\psfrag{cdots}{\huge $\cdots$}
\psfrag{text}{\huge $\leftrightarrow \ \ \ \ \ \{\{1,2,5\},\{3,4\},\{6\}\}$}
\psfrag{x1}{\huge $x^1$}
\psfrag{x2}{\huge $x^2$}
\psfrag{xn-1}{\huge $x^{t-1}$}
\psfrag{xn}{\huge $x^t$}
\psfrag{+}{\huge $+$}
\psfrag{del}{\huge $\delta$}
\resizebox{5.0cm}{!}{\includegraphics{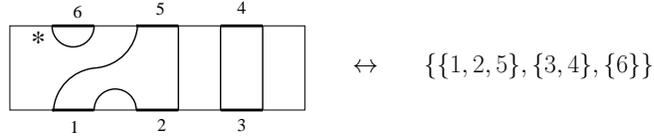}}
\caption{Bijection between TL-diagrams and non-crossing partitions}
\label{tlncpcorr}
\end{figure}
The Temperley-Lieb diagram on the left is to correspond to the non-crossing
partition on the right. 
Given a Temperley-Lieb diagram $T$, number the black boundary segments of the diagram anti-clockwise and take the partition corresponding
to the black regions to get the associated non-crossing partition $\pi_T$.
In the reverse direction,
denote the TL-diagram corresponding to a non-crossing partition $\pi$
by $TL(\pi)$ so that, for instance, $T_k = \sum_{\pi \in NC(k)}
TL(\pi)$, where, $T_k$ (recall from \S 1) is the sum of all the Temperley-Lieb elements
of $P_k$.

\begin{proof}[Proof of Proposition \ref{freecumulants}]


By definition of the product and trace in $Gr_1(P)$, we see that $\tau_1(x^1x^2\cdots x^t)$ is given by the expression in Figure \ref{fig:trfig}.
\begin{figure}[!h]
\begin{center}
\psfrag{tpi}{\huge $TL(\pi)$}
\psfrag{sum}{\huge $\sum_{\pi \in NC(t)}$}
\psfrag{delinphia}{\huge $\delta^{-1} \phi(a)$}
\psfrag{d}{\huge $\delta^{-1}$}
\psfrag{x1}{\huge $x^1$}
\psfrag{x2}{\huge $x^{2}$}
\psfrag{xn-1}{\huge $x^{t-1}$}
\psfrag{xn}{\huge $x^t$}
\psfrag{cdots}{\huge $\cdots$}
\psfrag{del}{\huge $\delta$}
\resizebox{8.0cm}{!}{\includegraphics{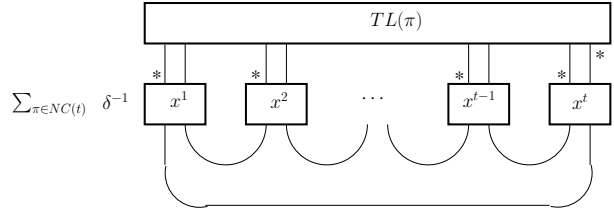}}
\end{center}
\caption{The trace of a product of elements of $P_2 \subseteq Gr_1(P)$}
\label{fig:trfig}
\end{figure}

We analyse the $\pi$-term of this sum. Since any non-crossing partition has a class that is an interval,
let $C$ be such a class of $\pi$ and suppose that $C = [k,l]$
where $1 \leq k \leq l \leq t$. 
The $\pi$-term then contains as a `sub-picture' the 1-tangle in Figure
\ref{fig:onetangle}.
 \begin{figure}[!h]
\begin{center}
\psfrag{tpi}{\huge $TL(\pi)$}
\psfrag{sum}{\huge $\sum_{\pi \in NC(k)}$}
\psfrag{delinphia}{\huge $\delta^{-1} \phi(a)$}
\psfrag{d}{\huge $\delta^{-1}$}
\psfrag{x1}{\huge $x^k$}
\psfrag{x2}{\huge $x^{k+1}$}
\psfrag{xn-1}{\huge $x^{l-1}$}
\psfrag{xn}{\huge $x^l$}
\psfrag{cdots}{\huge $\cdots$}
\psfrag{del}{\huge $\delta$}
\resizebox{6.0cm}{!}{\includegraphics{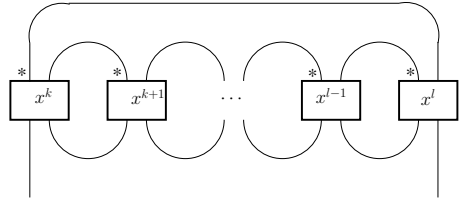}}
\end{center}
\caption{Sub-picture corresponding to the class $C$ of $\pi$}
\label{fig:onetangle}
\end{figure}
The irreducibility of the planar algebra $P$ implies that  this 1-tangle is 
a scalar multiple of $1_1$ (the unit element of $P_1$) the scalar
being given in Figure \ref{fig:scalar}.
\begin{figure}[!h]
\begin{center}
\psfrag{epa}{\huge $\epsilon(a)$}
\psfrag{eq}{\huge $=$}
\psfrag{cdots}{\huge $\cdots$}
\psfrag{d}{\huge $\delta^{-1}$}
\psfrag{x1}{\huge $x^k$}
\psfrag{x2}{\huge $x^{k+1}$}
\psfrag{xn-1}{\huge $x^{l-1}$}
\psfrag{xn}{\huge $x^l$}
\psfrag{+}{\huge $+$}
\psfrag{del}{\huge $\delta$}
\resizebox{7.0cm}{!}{\includegraphics{kappafig.eps}}
\end{center}
\caption{}
\label{fig:scalar}
\end{figure}

We may now peel off the next class of $\pi$ that is an interval and
proceed by induction to conclude that $\tau_1(x^1x^2\cdots x^t)$
is given by the expression in Figure \ref{fig:expression}, where we
write $C = \{i_1^C, i_2^C, \cdots ,i_{|C|}^C\}$.
\begin{figure}[!h]
\begin{center}
\psfrag{epa}{\huge $\epsilon(a)$}
\psfrag{eq}{\huge $=$}
\psfrag{sum}{\huge $\sum_{\pi \in NC(k)}$}
\psfrag{prod}{\huge $\prod_{C \in \pi}$}
\psfrag{cdots}{\huge $\cdots$}
\psfrag{d}{\huge $\delta^{-1}$}
\psfrag{x1}{\huge $x^{i_1^C}$}
\psfrag{x2}{\huge $x^{i_2^C}$}
\psfrag{xn-1}{\huge $x^{i_{|C|-1}^C}$}
\psfrag{xn}{\huge $x^{i_{|C|}^C}$}
\psfrag{+}{\huge $+$}
\psfrag{del}{\huge $\delta$}
\resizebox{10.0cm}{!}{\includegraphics{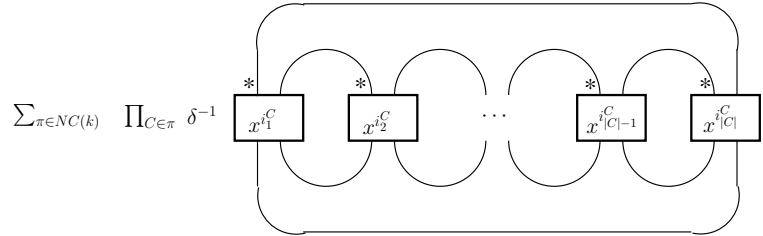}}
\end{center}
\caption{Expression for $\tau_1(x^1x^2\cdots x^t)$}
\label{fig:expression}
\end{figure}
Now, Mobius inversion (the implication $(1) \Rightarrow (2)$ of Theorem~\ref{Mobius})
yields the desired expression for $\kappa_t(x^1,x^2,\cdots,x^t)$.
\end{proof}

We extract a corollary of Proposition \ref{freecumulants} when $P = P(H)$. For $\gamma \in \widehat{H^*}$, let $A(\gamma)$
denote the subalgebra of $Gr_1(P)$ generated by $\gamma_{kl}
\in P_2 \subseteq Gr_1(P)$ for
$1 \leq k,l \leq d_\gamma$ and let $M(\gamma) = A(\gamma)^{\prime\prime} \subseteq M_1$. Let $X(\gamma) \in M_{d_\gamma}(M_1)$ be the $d_\gamma \times d_\gamma$ matrix $X(\gamma)
= ((\gamma_{kl}))$, and $\kappa^{d_\gamma}_*(\cdots)$ denote the free
cumulants of $M_{d_\gamma}(M_1)$.

\begin{corollary}\label{useful}
\itemize
\item[(1)] For each
$\gamma \in \widehat{H^*}$, the matrix $X(\gamma)$ is uniformly $R$-cyclic  with 
determining sequence $\{(\frac{\delta}{d_\gamma})^{t-1}\}_{t \in {\mathbb N}}$.
\item[(2)] The collection $\{M(\gamma)\}_{ \gamma \in \widehat{H^*}}$ is a 
free family in $M_1$.
\end{corollary}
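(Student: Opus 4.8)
The plan is to deduce both statements from a single computation of the mixed free cumulants of the generators $\gamma_{kl}$, combining the cumulant formula of Proposition~\ref{freecumulants} with the explicit structure of $P(H)$ recorded in Proposition~\ref{xprop}. Concretely, I would compute $\kappa_t(\gamma^1_{i_1j_1},\dots,\gamma^t_{i_tj_t})$ for arbitrary irreducibles $\gamma^1,\dots,\gamma^t\in\widehat{H^*}$ and arbitrary matrix indices, and show that it equals $(\delta/d_\gamma)^{t-1}$ when all the $\gamma^s$ coincide with a single $\gamma$ and the indices match cyclically (i.e.\ $j_1=i_2,\dots,j_{t-1}=i_t,j_t=i_1$), and vanishes otherwise. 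Restricting to the case where all $\gamma^s=\gamma$ then yields part (1), while the vanishing of the genuinely mixed cumulants yields part (2).

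For the computation itself, Proposition~\ref{freecumulants} expresses $\kappa_t(x^1,\dots,x^t)$, for $x^s\in P_2$, as the single scalar produced by the tangle of Figure~\ref{fig:kappafig}, in which the $t$ two-boxes are strung together cyclically. Substituting the basis elements $x^s=\gamma^s_{i_sj_s}$, the strands joining consecutive boxes become a chain of matrix coefficients, and I would evaluate it using the relation of Proposition~\ref{xprop}(4) (Figure~\ref{deltarel}): the internal sum over $t_1,\dots,t_k$ appearing there is exactly what couples the external indices and forces the cyclic matching condition, with the antipode/conjugation bookkeeping of Proposition~\ref{xprop}(1) keeping track of the $*$'s. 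When two of the $\gamma^s$ are inequivalent, the corresponding link is a pairing of matrix coefficients of inequivalent irreducibles and so vanishes by Schur orthogonality; in planar-algebraic form this is the orthonormality statement of Proposition~\ref{xprop}(2). This gives the vanishing of all genuinely mixed cumulants.

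Granting this, part (1) is immediate from the definition of uniform $R$-cyclicity. For part (2), the vanishing of every mixed cumulant of the generating sets $\{\gamma_{kl}\}$ of the algebras $A(\gamma)$ is precisely Speicher's freeness criterion (Theorem~\ref{cumulant}), so the family $\{A(\gamma)\}_\gamma$ of $*$-subalgebras of $Gr_1(P)$ is free; Lemma~\ref{dc} then promotes this to freeness of the generated von Neumann algebras $\{M(\gamma)=A(\gamma)''\}_\gamma$ in $M_1$, which is the assertion. (Part (1) together with Theorem~\ref{rcyclic} separately records that $M_{d_\gamma}(\mathbb{C})$ and the algebra generated by $X(\gamma)$ are free inside $M_{d_\gamma}(M_1)$, a fact used later rather than what part (2) states.)

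The step I expect to be the real obstacle is pinning down the exact scalar $(\delta/d_\gamma)^{t-1}$ in the cyclically matched case. Extracting the cyclic condition, and the mixed-cumulant vanishing, is essentially a matter of deciding which strands are forced to connect; but the precise constant demands careful accounting of the factors of $\delta$ contributed by the closed loops in Figure~\ref{fig:kappafig} against the factors of $d_\gamma^{-1}$ entering through the normalization $\widetilde{\gamma_{pq}}=\sqrt{d_\gamma}\,\gamma_{pq}$ of Proposition~\ref{xprop}(2). By comparison, once the orthogonality is set up the mixed-cumulant vanishing needed for part (2) should be routine.
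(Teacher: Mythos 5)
Your proposal follows essentially the same route as the paper's proof: evaluate the mixed cumulants $\kappa_t(\gamma^1_{i_1j_1},\dots,\gamma^t_{i_tj_t})$ via the tangle of Proposition~\ref{freecumulants}, use Proposition~\ref{xprop} to show these vanish except in the cyclically matched single-$\gamma$ case where they equal $(\delta/d_\gamma)^{t-1}$, then obtain (1) from the definition of uniform $R$-cyclicity and (2) from Theorem~\ref{cumulant} together with Lemma~\ref{dc}. Your spelled-out use of Proposition~\ref{xprop}(1),(2),(4) is exactly the ``judicious use of various parts of Proposition~\ref{xprop}'' that the paper leaves implicit, and your closing parenthetical correctly distinguishes the content of (2) from the $R$-cyclicity freeness used later in Proposition~\ref{mgamma}.
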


\begin{proof}
The key calculation is that of the free cumulant $\kappa_t(\gamma^1_{i_1,j_1},\gamma^2_{i_2,j_2},\cdots,\gamma^t_{i_t,j_t})$ for $\gamma^1,\cdots,\gamma^t
\in \widehat{H^*}$, which, by Proposition \ref{freecumulants}
is given by the value of the tangle in Figure \ref{vanishes}.
\begin{figure}[!h]
\begin{center}
\psfrag{epa}{\huge $\epsilon(a)$}
\psfrag{eq}{\huge $=$}
\psfrag{cdots}{\huge $\cdots$}
\psfrag{d}{\huge $\delta^{-1}$}
\psfrag{x1}{\huge $\gamma^1_{i_1,j_1}$}
\psfrag{x2}{\huge $\gamma^2_{i_2,j_2}$}
\psfrag{xn-1}{\huge $\gamma^{t-1}_{i_{t-1},j_{t-1}}$}
\psfrag{xn}{\huge $\gamma^t_{i_t,j_t}$}
\psfrag{+}{\huge $+$}
\psfrag{del}{\huge $\delta$}
\resizebox{7.0cm}{!}{\includegraphics{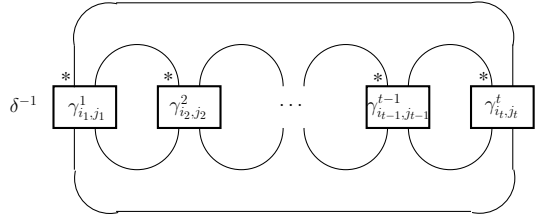}}
\end{center}
\caption{$\kappa_t(\gamma^1_{i_1,j_1},\gamma^2_{i_2,j_2},\cdots,\gamma^t_{i_t,j_t})$}
\label{vanishes}
\end{figure}
Judicious use of various parts of Proposition \ref{xprop} then shows that this vanishes
unless $\gamma^1 = \gamma^2 = \cdots = \gamma^t = \gamma$, say, and $j_1 = i_2$, $j_2 = i_3$,$\cdots$,$j_t = i_1$,
in which case it equals $(\frac{\delta}{d_\gamma})^{t-1}$.
This proves $(1)$
and, combined with Theorem \ref{cumulant} and Lemma \ref{dc}, yields $(2)$.
\end{proof}

The final hurdle to be crossed to prove Theorem \ref{main1} is the
determination of the structure of $M(\gamma)$; before getting to this,
we need an elementary fact.

\begin{lemma}\label{simple}
Suppose that $(A,\phi)$ is a von Neumann algebraic probability
space and $d\in {\mathbb N}$.
Assume that $$(M_d(A),\phi^d) \cong \underset{\alpha_1}{F_1} \oplus \underset{\alpha_2}{F_2} \oplus \cdots \oplus \underset{\alpha_k}{F_k},$$ where
the $F_i$ are all finite factors and $0 < \alpha_i \leq 1$ with $\sum_i \alpha_i = 1.$
Then, $$(A,\phi) \cong \underset{\alpha_1}{(F_1)_{\frac{1}{d}}} \oplus \underset{\alpha_2}{(F_2)_{\frac{1}{d}}} \oplus \cdots \oplus \underset{\alpha_k}{(F_k)_{\frac{1}{d}}}.$$
\end{lemma}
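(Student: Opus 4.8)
The plan is to realize $(A,\phi)$ as a corner of $(M_d(A),\phi^d)$ and then transport the hypothesized decomposition through the matrix unit $e_{11}$. Concretely, the map sending $a \in A$ to the matrix with $a$ in the $(1,1)$ slot and zeros elsewhere is a unital $*$-isomorphism of $A$ onto the corner $e_{11}M_d(A)e_{11}$. Since $\phi^d(e_{11}) = \frac1d$ and $\phi^d$ of the image of $a$ equals $\frac1d\phi(a)$, the state $\phi$ corresponds under this isomorphism precisely to the renormalized restriction $\frac{1}{\phi^d(e_{11})}\,\phi^d|_{e_{11}M_d(A)e_{11}}$. Thus it suffices to compute the corner $e_{11}M_d(A)e_{11}$ together with this renormalized trace.

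Next I would push $e_{11}$ through the given isomorphism $\Phi\colon M_d(A) \to \bigoplus_i F_i$, writing $p := \Phi(e_{11}) = (p_i)_i$ with each $p_i$ a projection in the factor $F_i$. Because $\Phi$ is trace-preserving and a direct sum of factors compresses componentwise, the corner becomes $p\big(\bigoplus_i F_i\big)p = \bigoplus_i p_iF_ip_i$, and each nonzero $p_iF_ip_i$ is, by the definition of the ampliation, isomorphic to $(F_i)_{\beta_i}$ with $\beta_i = tr_{F_i}(p_i)$. So everything reduces to identifying the component traces $tr_{F_i}(p_i)$ and the resulting weights.

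The key step, which I expect to be the main obstacle, is to show that $tr_{F_i}(p_i) = \frac1d$ for every $i$. For this I would exploit the full system of matrix units: the projections $e_{11},\dots,e_{dd} \in M_d(\mathbb C) \subseteq M_d(A)$ are mutually orthogonal, sum to $1$, and are pairwise equivalent via the partial isometries $e_{j1}$. Applying $\Phi$ and reading off components, the images $\Phi(e_{jj}) = (p^{(j)}_i)_i$ satisfy $\sum_j p^{(j)}_i = 1_{F_i}$ and $p^{(1)}_i \sim p^{(j)}_i$ in $F_i$ for all $j$; since equivalent projections in a finite factor have equal trace, each $p^{(j)}_i$ has normalized trace $\frac1d$. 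In particular $tr_{F_i}(p_i) = tr_{F_i}(p^{(1)}_i) = \frac1d$, which forces $p_i \neq 0$ (no summand drops out) and gives $p_iF_ip_i \cong (F_i)_{1/d}$.

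Finally I would pin down the weights by a direct trace computation. The renormalized trace on the corner is $\psi = \frac{1}{\phi^d(p)}\,\phi^d|_{\mathrm{corner}}$, and $\phi^d(p) = \sum_i \alpha_i tr_{F_i}(p_i) = \frac1d\sum_i\alpha_i = \frac1d$, so $\psi = d\,\phi^d|_{\mathrm{corner}}$. Using that the normalized trace of $(F_i)_{1/d} = p_iF_ip_i$ equals $d\,tr_{F_i}$ on that corner, one checks for $x = (x_i)_i$ that $\psi(x) = d\sum_i \alpha_i\, tr_{F_i}(x_i) = \sum_i \alpha_i\, tr_{(F_i)_{1/d}}(x_i)$, whence the weight attached to $(F_i)_{1/d}$ is exactly $\alpha_i$. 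This yields $(A,\phi) \cong \bigoplus_i \underset{\alpha_i}{(F_i)_{1/d}}$, as desired. The only genuine content beyond bookkeeping is the equivalence-of-matrix-units argument that fixes all component traces at $\frac1d$.
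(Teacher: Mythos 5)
Your proof is correct, but it takes a genuinely different route from the paper's. You work \emph{downward}: you realize $A$ as the corner $e_{11}M_d(A)e_{11}$, transport $e_{11}$ through the given isomorphism, and use the full system of matrix units --- mutually orthogonal, summing to $1$, pairwise equivalent, hence (since equivalent projections in a finite factor have equal trace) each component projection has trace exactly $\frac{1}{d}$ --- to identify each compressed summand as $(F_i)_{1/d}$ and the weights as $\alpha_i$. The paper instead works \emph{upward}: it first proves a uniqueness statement for trace-preserving direct sum decompositions into finite factors (via minimal central projections), then uses $Z(A)\cong Z(M_d(A))$ to decompose $A$ abstractly as $\oplus_i\underset{\beta_i}{\tilde F_i}$, amplifies to get $(M_d(A),\phi^d)\cong \oplus_i \underset{\beta_i}{M_d(\tilde F_i)}$, and matches this against the hypothesized decomposition to conclude $M_d(\tilde F_i)\cong F_i$ and $\beta_i=\alpha_i$, whence $\tilde F_i\cong (F_i)_{1/d}$. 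Your compression argument is more constructive: it needs no uniqueness lemma and no center computation, it makes visible why no summand can drop out (each $p_i$ has trace $\frac1d>0$), and it shows directly that the ampliation $(F_i)_{1/d}$ is well-defined even when $F_i$ is type $I_n$ (the required projection exists because it is the image of $e_{11}$). The paper's argument buys structural cleanliness --- no partial isometries or projection bookkeeping --- and its uniqueness observation is what makes the statement of the lemma unambiguous in the first place; it also gets traciality and faithfulness of $\phi$ by an explicit remark, which in your approach is absorbed silently into the fact that the renormalized restriction of a faithful trace to a corner is again a faithful trace (a point worth stating explicitly).
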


\begin{proof} 

%

%
Observe first that the direct sum decomposition of
the non-commutative probability space $(M_d(A),\phi)$ is unique
in the sense that if 
$$(M_d(A),\phi^d) \cong \underset{\alpha^\prime_1}{F^\prime_1} \oplus \underset{\alpha^\prime_2}{F^\prime_2} \oplus \cdots \oplus \underset{\alpha^\prime_k}{F^\prime_{k^\prime}},$$
is another such decomposition, then $k=k^\prime$ and, after a
rearrangement, $F_i \cong F^\prime_i$ and $\alpha_i = \alpha^\prime_i$.
To see this, let $\{e_1,\cdots,e_k\}$ be the set of minimal central
projections of $F_1 \oplus F_2 \oplus \cdots \oplus F_k$ and 
$\{e_1^\prime,\cdots,e_{k^\prime}^\prime\}$ be the corresponding set for $F_1^\prime \oplus F_2^\prime \oplus \cdots \oplus F_{k^\prime}^\prime$.
 The trace preserving
isomorphism between $F_1 \oplus F_2 \oplus \cdots \oplus F_k$ and 
$F_1^\prime \oplus F_2^\prime \oplus \cdots \oplus F_{k^\prime}^\prime$ induces a bijection between these sets,
so that $k = k^\prime$ and we may assume after rearrangement that
$e_i$ corresponds to $e^\prime_i$.
Further, the quotient of $F_1 \oplus F_2 \oplus \cdots \oplus F_k$ by $1-e_i$, which is $F_i$, is isomorphic to
the quotient of $F_1^\prime \oplus F_2^\prime \oplus \cdots \oplus F_k^\prime$ by $1-e^\prime_i$ which is $F^\prime_i$.
Finally $\alpha_i = \alpha^\prime_i$ since these are the traces of
$e_i$ and $e_i^\prime$ respectively and the isomorphism is trace
preserving.

Now, since $Z(A)\cong Z(M_d(A))$ which is $k$-dimensional, it follows that $A$ is isomorphic
to a direct sum of $k$ factors.
Suppose that $A \cong \tilde{F_1} \oplus \cdots \oplus \tilde{F_k}$
for factors $\tilde{F_i}$. 
Since $\phi^d$ is tracial and faithful (by the assumed strict
positivity of the $\alpha_i$'s), so is $\phi$ and so $(A,\phi) \cong
\underset{\beta_1}{\tilde{F_1}} \oplus \cdots \oplus \underset{\beta_k}{\tilde{F_k}}$ for some $0 < \beta_i \leq 1$ and therefore
$(M_d(A),\phi^d) \cong \underset{\beta_1}{M_d(\tilde{F_1})} \oplus \cdots \oplus \underset{\beta_k}{M_d(\tilde{F_k})}.$
By the observation made at the start of this proof, we may assume that $M_d(\tilde{F_i})
\cong F_i$ and that $\beta_i = \alpha_i$. Therefore,
$$(A,\phi) \cong \underset{\alpha_1}{(F_1)_{\frac{1}{d}}} \oplus \underset{\alpha_2}{(F_2)_{\frac{1}{d}}} \oplus \cdots \oplus \underset{\alpha_k}{(F_k)_{\frac{1}{d}}},$$
concluding the proof.
\end{proof}

\begin{proposition}\label{mgamma}
For $\gamma \in \widehat{H^*}$, let $\tau_\gamma = \tau_1|_{M(\gamma)}$. Then
$$
(M(\gamma),\tau_\gamma) \cong 
(\underset{1-\delta^{-1}}{\mathbb C} \oplus 
\underset{\delta^{-1}}{L{\mathbb Z}})^{*\ d_\gamma^2}.
$$
\end{proposition}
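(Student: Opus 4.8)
The plan is to realize $M(\gamma)$ as built from a single free-Poisson element via the $R$-cyclic machinery, compute the matrix amplification $M_{d_\gamma}(M(\gamma))$ as a free product, and then descend to $M(\gamma)$ itself using Lemma \ref{simple} — which is exactly the device that lemma was set up to provide. Throughout write $d = d_\gamma$ and work inside the matrix probability space $(M_d(M(\gamma)), (\tau_\gamma)^d)$, noting that its free cumulants coincide with those computed in $M_d(M_1)$ since free cumulants depend only on the joint distribution of the entries.

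First I would record that $X(\gamma) = ((\gamma_{kl}))$ is self-adjoint. Under the $*$-structure of $T(H)$ transported to $Gr_1(P)$ by Proposition \ref{gr1p}, the $\dagger$ on the degree-one part acts by $x \mapsto S(x)^* = S(x^*)$, so using Proposition \ref{xprop}(1) and $S^2 = \mathrm{id}$ we get $(X(\gamma)^*)_{kl} = \gamma_{lk}^\dagger = S(\gamma_{lk}^*) = S(S\gamma_{kl}) = \gamma_{kl} = X(\gamma)_{kl}$. By Corollary \ref{useful}(1) the matrix $X(\gamma)$ is uniformly $R$-cyclic with determining sequence $(\delta/d)^{t-1}$, so Theorem \ref{rcyclic} yields $\kappa^d_t(X(\gamma),\ldots,X(\gamma)) = d^{t-1}(\delta/d)^{t-1} = \delta^{t-1}$. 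Thus $X(\gamma)$ is a self-adjoint free-Poisson element of rate $\delta^{-1}$ and jump size $\delta$, and Proposition \ref{poisson} identifies the von Neumann algebra it generates as $\underset{1-\delta^{-1}}{\mathbb{C}} \oplus \underset{\delta^{-1}}{L\mathbb{Z}}$.

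Next, Theorem \ref{rcyclic} also gives that $A_1 = M_d(\mathbb{C})$ and the algebra $A_2$ generated by $X(\gamma)$ are free; by Lemma \ref{dc} so are $M_d(\mathbb{C})$ and $\{X(\gamma)\}''$. Since the matrix units of $A_1$ together with the entries of $X(\gamma)$ generate $M_d(A(\gamma))$ (one recovers $\gamma_{kl}$ as $E_{kk}X(\gamma)E_{ll}$), these two free subalgebras generate $M_d(M(\gamma))$, so the latter is their von Neumann free product:
\[
(M_d(M(\gamma)), (\tau_\gamma)^d) \cong M_d(\mathbb{C}) * \bigl(\underset{1-\delta^{-1}}{\mathbb{C}} \oplus \underset{\delta^{-1}}{L\mathbb{Z}}\bigr).
\]
I would then evaluate the right-hand side with Proposition \ref{dyk2} (taking $r=1$ since $L\mathbb{Z} = LF(1)$ and $\alpha = \delta^{-1}$), whose two cases are governed by $\delta^{-1} \gtrless d^{-2}$, i.e.\ $d^2 \gtrless \delta$. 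Lemma \ref{simple} now converts the resulting decomposition of $M_d(M(\gamma))$ into one for $M(\gamma)$, replacing each finite-factor summand by its $\tfrac1d$-ampliation, where $(M_d(\mathbb{C}))_{1/d} \cong \mathbb{C}$ and, by Proposition \ref{dykrdl}(2), $(LF(\rho))_{1/d} \cong LF((\rho-1)d^2 + 1)$. Substituting in each case gives $M(\gamma) \cong LF(d^2(2\delta^{-1}-\delta^{-2}))$ when $d^2 \geq \delta$, and $M(\gamma) \cong \underset{1-d^2\delta^{-1}}{\mathbb{C}} \oplus \underset{d^2\delta^{-1}}{LF(2 - d^{-2})}$ when $d^2 \leq \delta$. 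Comparing with Corollary \ref{dyk1} applied with $N = d^2$, whose two cases are precisely $d^2 \geq \delta$ and $d^2 \leq \delta$, the formulas match term for term, yielding the claimed isomorphism.

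The main obstacle I anticipate is not any single hard estimate but the careful passage between $M_d(M(\gamma))$ and $M(\gamma)$ via Lemma \ref{simple}, combined with correctly handling the type $I$ ampliation $(M_d(\mathbb{C}))_{1/d} \cong \mathbb{C}$ (legitimate because $\tfrac1d$ is an integral multiple of $\tfrac1d$) and keeping the case-divisions of Proposition \ref{dyk2} aligned with those of Corollary \ref{dyk1}. The self-adjointness verification for $X(\gamma)$, though short, is the other delicate point, since it requires invoking the $\dagger$-versus-$*$ distinction and the $S = Z_R$ identification together. Once these bookkeeping matters are in place, the isomorphism follows mechanically from the cited free-probability computations.
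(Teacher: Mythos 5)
Your proposal is correct and follows essentially the same route as the paper's own proof: self-adjointness of $X(\gamma)$ via the $\dagger$-structure, uniform $R$-cyclicity (Corollary \ref{useful}) plus Theorem \ref{rcyclic} to get the free Poisson element and its freeness from $M_{d_\gamma}(\mathbb{C})$, Proposition \ref{poisson} and Proposition \ref{dyk2} to compute $M_{d_\gamma}(M(\gamma))$ in the two cases, and Lemma \ref{simple} with Proposition \ref{dykrdl}(2) to descend to $M(\gamma)$ and match Corollary \ref{dyk1} with $N = d_\gamma^2$. The case computations and the final comparison agree term for term with the paper.
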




\begin{proof}
By definition, $M(\gamma)$ is the von Neumann algebraic probability subspace
of $(M_1,\tau_1)$
generated by the entries of $X(\gamma) \in M_{d_\gamma}(M_1)$. It follows that
$M_{d_\gamma}(M(\gamma))$ is the von Neumann algebraic probability subspace
of  $(M_{d_\gamma}(M_1),\tau_1^{d_\gamma})$ generated by $X(\gamma)$
and $M_{d_\gamma}({\mathbb C})$.

Notice now that although $\gamma_{kl}^* = S\gamma_{lk}$ (in $P(H)$) ,
we see from the definitions that $\gamma_{kl}^\dagger = \gamma_{lk}$
(in $Gr_1(P)$) and consequently $X(\gamma) \in M_{d_\gamma}(M_1)$ is self-adjoint.
By Corollary \ref{useful}(1), the matrix $X(\gamma)$ is uniformly $R$-cyclic with determining
sequence $\{(\frac{\delta}{d_\gamma})^{t-1}\}_{t \in {\mathbb N}}$; Theorem
\ref{rcyclic} now implies that $X(\gamma)$ is a free Poisson variable with rate $\delta^{-1}$ and jump size $\delta$ and so the von Neumann algebraic probability space that
it generates is $\underset{\ \ 1-\delta^{-1}}{{\mathbb C}} \oplus \underset{\ \ \delta^{-1}}{L{\mathbb Z}}$ by Proposition \ref{poisson}.

By Theorem \ref{rcyclic} again and Lemma \ref{dc}, the von Neumann algebraic
probability spaces generated by $X(\gamma)$ and $M_{d_\gamma}({\mathbb C})$
are free in $M_{d_\gamma}(M_1)$ and therefore
\begin{eqnarray*}
(M_{d_\gamma}(M(\gamma)),\tau_\gamma^{d_\gamma}) &\cong&
(\underset{1-\delta^{-1}}{{\mathbb C}} \oplus \underset{\ \ \delta^{-1}}{L{\mathbb Z}}) * M_{d_\gamma}({\mathbb C})\\ &\cong&
\left\{ 
\begin{array}{ll}
                   LF(2\delta^{-1}-\delta^{-2}+1-{d_\gamma}^{-2}) & {\text {if\ }} \delta^{-1} \geq {d_\gamma}^{-2} \\
                    \underset{1-\delta^{-1} {d_\gamma}^2}{M_{d_\gamma}({\mathbb C})} \oplus \underset{\delta^{-1} {d_\gamma}^2}{LF({-d_\gamma}^{-4}+1+{d_\gamma}^{-2})} & {\text {if\ }} \delta^{-1} \leq {d_\gamma}^{-2}
        \end{array}
\right. 
\end{eqnarray*}
where the last isomorphism appeals to Proposition \ref{dyk2}.
Now Lemma \ref{simple} and Proposition \ref{dykrdl} show that
$$
(M(\gamma)),\tau_\gamma) \cong
\left\{ 
\begin{array}{ll}
                   LF(2\delta^{-1}d_\gamma^2-\delta^{-2}{d_\gamma}^{2}) & {\text {if\ }} \delta^{-1} \geq {d_\gamma}^{-2} \\
                    \underset{1-\delta^{-1} {d_\gamma}^2}{\mathbb C} \oplus \underset{\delta^{-1} {d_\gamma}^2}{LF(2-d_\gamma^{-2})} & {\text {if\ }} \delta^{-1} \leq {d_\gamma}^{-2}
        \end{array}
\right. 
$$
Finally, an application of Corollary \ref{dyk1} with $N = d_\gamma^2$ yields the desired result.
\end{proof}


We conclude this section with the proof of its main result.

\begin{proof}[Proof of Theorem \ref{main1}]
Since the family $\{M(\gamma)\}_{\gamma \in \widehat{H^*}}$ is free
in $M_1$ and generates it as a von Neumann algebra, 
\begin{eqnarray*}
M_1 &\cong& *_{\gamma \in \widehat{H^*}}M(\gamma)\\
&\cong& *_{\gamma \in \widehat{H^*}}(\underset{1-\delta^{-1}}{\mathbb C} \oplus 
\underset{\delta^{-1}}{L{\mathbb Z}})^{*\ d_\gamma^2}\\
&\cong& (\underset{1-\delta^{-1}}{\mathbb C} \oplus 
\underset{\delta^{-1}}{L{\mathbb Z}})^{*\ n}\\
&\cong& LF(2\sqrt{n}-1),
\end{eqnarray*}
where the second isomorphism follows from Proposition \ref{mgamma}
and the last isomorphism from Corollary \ref{dyk1}.
\end{proof}

\section{Determination of $M_2$}

The main result of this section is the identification of $M_2$
as an interpolated free group factor. The strategy of proof
is similar to that of the last section. We determine a pair
of subalgebras of $Gr_2(P)$ that are free and generate it
and compute the free product of the generated von Neumann
algebras to determine $M_2$.

\begin{theorem}\label{main2}
Let $H$ be a finite dimensional Kac algebra of dimension $n > 1$, $P = P(H)$ be its planar algebra and $M_0 \subseteq
M_1 \subseteq \cdots$ be the tower of factors associated
to $P$ by the GJS-construction. Then, $M_2 \cong LF(\frac{2}{\sqrt{n}} - \frac{2}{n} + 1)$.
\end{theorem}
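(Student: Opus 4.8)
The plan is to follow the template of \S4: exhibit two von Neumann subalgebras of $M_2$ that are free and together generate it, identify each, and take their free product. Since the target $LF(\frac{2}{\sqrt n}-\frac{2}{n}+1)$ is exactly the output of Proposition \ref{dyk3} applied with $A = H$ (an $n$-dimensional algebra, hence of free dimension $1-\frac1n$) and $\alpha = \delta^{-1} = n^{-1/2}$, the two subalgebras must be a copy of $H = P_2 \subseteq Gr_2(P)$ carrying its regular trace and a copy of $\mathbb{C}_{1-\delta^{-1}}\oplus L\mathbb{Z}_{\delta^{-1}}$. Concretely, I would produce a single self-adjoint element $Y \in P_3 \subseteq Gr_2(P)$, show that $W^*(Y)$ and $P_2$ are free and generate $M_2$, that $Y$ is free Poisson with $\kappa_t(Y,\dots,Y)=\delta^{t-1}$, and that $\tau_2|_{P_2}$ is the regular trace on $H$. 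Granting these, Proposition \ref{poisson} gives $W^*(Y)\cong \mathbb{C}_{1-\delta^{-1}}\oplus L\mathbb{Z}_{\delta^{-1}}$, and Proposition \ref{dyk3} yields
\[
M_2 \cong \Bigl(\underset{1-\delta^{-1}}{\mathbb{C}}\oplus\underset{\delta^{-1}}{L\mathbb{Z}}\Bigr)*H \cong LF\!\left(2\delta^{-1}-\delta^{-2}+1-\tfrac1n\right)=LF\!\left(\tfrac{2}{\sqrt n}-\tfrac{2}{n}+1\right),
\]
using $\delta=\sqrt n$.

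First I would determine the $*$-algebra structure of $Gr_2(P)$, in analogy with Proposition \ref{gr1p}. Grading $P_m$ by $m-2$, the degree-zero part is $P_2 = H$ (which contains the unit), and $Gr_2(P)$ is the tensor algebra over $H$ of the $H$-bimodule $P_3$; here the multiplication $M = M(2)^{m+n-2}_{m,n}$ plays the role that $M(1)$ did in \S4. Proposition \ref{xprop}(3) identifies $P_3$ with $H^{\otimes 2}$ via $Z_X$, so $\dim P_3 = n^2 = \dim(H\otimes H)$; hence if I can exhibit a single self-adjoint $Y\in P_3$ with $\mathrm{span}(HYH)=P_3$, the bimodule $P_3$ is free of rank one on $Y$, and $P_2$ together with $Y$ generate $Gr_2(P)$, hence $M_2$ as a von Neumann algebra. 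The element $Y$ should be a suitably normalised $Z_X$-image of $1_H\otimes 1_H$, with $\dagger$-self-adjointness checked from the rules $\gamma^\dagger_{pq}=\gamma_{qp}$ exactly as in the computation preceding Proposition \ref{mgamma}.

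The heart of the matter is a free-cumulant computation extending Proposition \ref{freecumulants} from $Gr_1(P)$ to $Gr_2(P)$. Writing $\tau_2$ of an alternating word in the matrix-unit generators $\{\gamma_{pq}\}\subseteq P_2$ and $Y$ as a sum over $NC(t)$ of tangle evaluations (via the Temperley--Lieb/non-crossing bijection), and applying the Möbius inversion $(1)\Rightarrow(2)$ of Theorem \ref{Mobius}, I would read off $\kappa_k(z_1,\dots,z_k)$ for $z_i\in\{\gamma_{pq}\}\cup\{Y\}$. Two consequences are needed: every \emph{mixed} cumulant (whose arguments involve both a $\gamma_{pq}$ and at least one $Y$) vanishes, so that by Theorem \ref{cumulant} and Lemma \ref{dc} the algebras $H$ and $W^*(Y)$ are free; and $\kappa_t(Y,\dots,Y)=\delta^{t-1}$, identifying $Y$ as the required free Poisson variable. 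As in Corollary \ref{useful}, both reduce to evaluating explicit closed diagrams using Proposition \ref{xprop} --- orthogonality of the $\gamma_{pq}$, the antipode rule $\gamma_{pq}^*=S\gamma_{qp}$, and the relation of Figure \ref{deltarel}.

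The main obstacle, and the genuine difference from \S4, is that the base algebra is now $P_2 = H$ rather than $P_1\cong\mathbb{C}$. Proposition \ref{freecumulants} used irreducibility decisively: peeling an interval class of a non-crossing partition produced a $1$-box, which \emph{is} a scalar, so the cumulant factored through the modulus. In $Gr_2(P)$ the same peeling produces a $2$-box, i.e.\ an element of $H$, so the naive computation is $H$-valued, and I must argue that after closing against $\tau_2$ the mixed \emph{scalar} cumulants still vanish --- i.e.\ that $W^*(Y)$ is free from $H$ in the scalar sense, not merely amalgamated over $H$. I expect this to be the crux; it should be handled either by a direct tangle analysis exploiting the $\dagger$-matrix-unit structure of the $\gamma_{pq}$, or by repackaging the generators blockwise into matrices over the summands $M_{d_\gamma}(\mathbb{C})$ of $H$ and invoking the $R$-cyclicity criterion of Theorem \ref{rcyclic} as in Corollary \ref{useful}. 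Once scalar freeness and the free Poisson law are established, everything reduces to the free-dimension bookkeeping of Propositions \ref{poisson} and \ref{dyk3}.
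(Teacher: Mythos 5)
Your reductions are all correct and they coincide with the paper's own route: $Gr_2(P)$ is generated by $P_2$ together with a single self-adjoint element of $P_3$ (the paper's Proposition \ref{gengr2}, with $X$ taken to be the image of $1 \in P_2 \subseteq Gr_1(P)$ under the inclusion $Gr_1(P) \subseteq Gr_2(P)$ --- a choice that makes the free Poisson property immediate, since the inclusions are trace-compatible and so the cumulants of $X$ in $(Gr_2(P),\tau_2)$ equal those of $1 = triv_{11}$ in $(Gr_1(P),\tau_1)$, namely $\delta^{t-1}$ by Corollary \ref{useful}(1), with no new tangle computation); $\tau_2|_{P_2} = \phi$ is the regular trace; and, granted freeness, Propositions \ref{poisson} and \ref{dyk3} finish exactly as you say. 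But the crux you yourself flag --- scalar freeness of $W^*(X)$ and $P_2$ in $(Gr_2(P),\tau_2)$, which is Proposition \ref{xp2free} of the paper --- is precisely what your proposal does not prove, and neither of your two suggested routes closes it as stated. The $R$-cyclicity criterion (Theorem \ref{rcyclic}) concerns $M_d(\mathbb{C}) \subseteq M_d(A)$ for a single full matrix algebra, whereas here the relevant subalgebra is $H \cong \oplus_{\gamma \in \widehat{H^*}} M_{d_\gamma}(\mathbb{C})$, a multi-matrix algebra, so that theorem does not apply; and ``direct tangle analysis'' is not a method but the name of the work that remains.

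For comparison, here is how the paper fills the gap, and why your worry about peeling producing $2$-boxes dissolves. For a word $X^1\cdots X^t$ with $X^i \in P_2 \cup \{X\}$, set $D = \{i : X^i = X\}$ and $E = [t]\setminus D$. Since the $P_2$-letters contribute no free strands to the trace tangle, $Tr_2$ of the word is a sum over $NC(D)$ only, not over $NC(t)$. For fixed $\pi \in NC(D)$ one does not peel interval classes; instead one removes the floating loops, after which the connected components of the remaining picture string the $P_2$-boxes into closed necklaces, defining a partition $\tilde{\pi}$ of $E$ (the coarsest one with $\pi \cup \tilde{\pi}$ non-crossing). Each necklace is a closed $0$-tangle, so irreducibility does apply after all, and each class $C \in \tilde{\pi}$ contributes the scalar $\delta\phi(\prod_{c \in C} X^c)$: the feared $H$-valued obstruction never materializes. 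A loop-counting lemma (Proposition \ref{euler}: $N(\pi)-2 = |D|-|\pi|$) converts the power of $\delta$ into the multiplicative weight $\delta^{|D|-|\pi|}$. Finally, the vanishing of mixed cumulants is not obtained cumulant-by-cumulant: one defines a multiplicative family $\tilde{\kappa}$ that vanishes on mixed tuples, equals $\delta^{t-1}$ on pure-$X$ tuples and $\kappa_t$ on pure-$P_2$ tuples, checks that the moments equal $\sum_{\lambda \in NC(t)}\tilde{\kappa}_\lambda$, and concludes $\tilde{\kappa} = \kappa$ by the uniqueness built into M\"{o}bius inversion (Theorem \ref{Mobius}); freeness then follows from Theorem \ref{cumulant} and Lemma \ref{dc}. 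Without this argument (or an equivalent), what you have is a correct plan, not a proof.
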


The first step is to determine the structure of $Gr_2(P)$.
The graded $*$-algebra $T(H)$ admits an action by the Kac algebra $H$ defined by $\alpha_a(x^1 \otimes x^2 \otimes \cdots \otimes x^t)
= a_1x^1 \otimes \cdots \otimes a_tx^t$ for $a \in H$ and
$x^1 \otimes x^2 \otimes \cdots \otimes x^t \in H^{\otimes t} \subseteq T(H)$ (where
we use the notation $a_1 \otimes a_2 \otimes \cdots \otimes a_t$ for
the iterated coproduct $\Delta^t(a)$).
We may form the crossed-product algebra $T(H) \rtimes_\alpha H$
and introduce a grading on it by declaring that $deg(w \rtimes a) =
deg(w)$ for any $a \in H$ and homogeneous $w \in T(H)$.
The natural inclusion $T(H) \subseteq T(H) \rtimes_\alpha H$ is a map 
of graded $*$-algebras.

\begin{proposition}\label{gr2p} The algebras $T(H) \rtimes_\alpha H$
and $Gr_2(P)$ are isomorphic as graded $*$-algebras by
an isomorphism that extends the isomorphism from
$T(H)$ to $Gr_1(P)$.
\end{proposition}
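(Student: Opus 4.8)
The plan is to construct the isomorphism $T(H) \rtimes_\alpha H \to Gr_2(P)$ explicitly and check that it is a graded $*$-isomorphism extending the $T(H) \to Gr_1(P)$ map of Proposition \ref{gr1p}. Since $T(H) \rtimes_\alpha H$ is spanned by elements $w \rtimes a$ with $w \in H^{\otimes t}$ homogeneous and $a \in H$, and since the inclusion $T(H) \hookrightarrow T(H)\rtimes_\alpha H$ and the copy of $H$ (as $1 \rtimes a$) together generate the crossed product, it suffices to define the map on these generators and verify it respects the crossed-product relations, the grading, the product, and the $\dagger$-structure. On the $T(H)$ part I would use the already-established map from Proposition \ref{gr1p}, namely $Z_X$ on $H^{\otimes(n-1)}$; the new ingredient is where to send the extra copy $1 \rtimes a$ of $H$. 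The natural guess is that $1 \rtimes a \in (T(H)\rtimes_\alpha H)$ of degree $0$ should map to $a \in P_2 \subseteq Gr_2(P)$ (now viewing $P_2$ as sitting in degree $0$ of $Gr_2(P)$, since $Gr_2(P) = \oplus_{n \geq 2} P_n$ has its bottom graded piece $P_2$), again via the identification of $H$ with $P_2$.

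The key steps, in order, are as follows. First I would fix the tangle-level description of multiplication and the $\dagger$-operation in $Gr_2(P)$ using $M = M(2)^{m+n-2}_{m,n}$ and $D = D(2)^n_n$, exactly as was done for $Gr_1(P)$ but now with $k=2$, so that two strands are ``cabled off'' at the top. Second, I would define the candidate map on generators as above and verify it is well-defined on the crossed product, i.e.\ that the crossed-product relation $(1 \rtimes a)(w \rtimes 1)(1 \rtimes a')^{\dagger}$-type conjugation realises the action $\alpha$. Concretely, the crossed-product relation to check is that conjugating the image of $w = x^1 \otimes \cdots \otimes x^t$ by the image of $a$ reproduces the image of $\alpha_a(w) = a_1 x^1 \otimes \cdots \otimes a_t x^t$; this is precisely where the iterated coproduct $\Delta^t$ enters, and it should follow from a picture in which the degree-$0$ box labelled $a$ caps across all $t$ of the cabled strands, splitting via the comultiplication relation of Figure \ref{rels} into the factors $a_1, \ldots, a_t$ acting on $x^1, \ldots, x^t$ respectively. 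Third, I would check multiplicativity and $\dagger$-compatibility as tangle identities analogous to the two identities $M\circ_{(1,2)}(X^m,X^n) = X^{m+n-1}$ and $D\circ X^* = \sigma(X)\circ(R,\ldots,R)$ used in Proposition \ref{gr1p}, together with the fact that the degree-$0$ generators multiply according to the algebra structure of $P_2 \cong H$. Finally, I would establish bijectivity by a dimension/basis count: Proposition \ref{xprop}(3) gives a basis of each $P_n$ indexed by $(n-1)$-tuples of matrix-unit labels $\gamma^i_{p_iq_i}$, and I would match these against the basis of $T(H)\rtimes_\alpha H$ in degree $n$ given by $H^{\otimes(n-2)} \otimes H$ (i.e.\ a length-$(n-2)$ tensor times the crossed-product copy of $H$), checking the induced map on bases is an isomorphism.

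The main obstacle I expect is the second step: verifying that the degree-$0$ copy of $H$ inside $Gr_2(P)$ acts on $T(H) \cong Gr_1(P)$ by exactly the coproduct-twisted action $\alpha_a(x^1 \otimes \cdots \otimes x^t) = a_1 x^1 \otimes \cdots \otimes a_t x^t$, rather than by some other Hopf-algebraic twist (e.g.\ involving the antipode $S$, which intervenes in both the $\dagger$-structure of $T(H)$ and the $2$-rotation of Theorem \ref{paha}). Getting the comultiplication to distribute correctly across all $t$ cabled strands, with the right placement of $*$'s and rotations so that signs and antipodes cancel, is the delicate tangle computation here; everything else is a routine transcription of the $Gr_1(P)$ argument with $k=2$. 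Once the action is correctly identified, the remaining verifications are pictorial and the basis count closes the argument.
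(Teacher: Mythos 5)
Your proposal matches the paper's proof in all essentials: the paper defines the same map $\theta$ (the $Gr_1(P)$ isomorphism on $T(H)$, the identification of $H$ with $P_2 \subseteq Gr_2(P)$ on the acting copy), reduces multiplicativity to exactly the tangle identity you single out as the delicate point --- the box $a$ splitting via the comultiplication relation of Figure \ref{rels} into $a_1,\ldots,a_t$ acting on $x^1,\ldots,x^t$ (in the paper's Figure \ref{crux}, with a leftover $a_{t+1}$, i.e.\ the crossed-product straightening relation rather than your equivalent conjugation form) --- and obtains bijectivity from Proposition \ref{xprop}(3) by observing that the defining tangle is just $X^{t+2}_{2,2,\cdots,2}$ redrawn, which is the same content as your basis-matching step. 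The only cosmetic difference is that you propose checking $\dagger$-compatibility by separate tangle identities, whereas the paper deduces it formally from multiplicativity together with $*$-preservation on the two generating copies of $T(H)$ and $H$.
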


\begin{proof} Define $\theta : T(H) \rtimes_\alpha H \rightarrow Gr_2(P)$ by
letting $\theta(x^1 \otimes x^2 \otimes \cdots \otimes x^t \rtimes a)$
be given by the tangle in Figure \ref{imtheta}.
\begin{figure}[!h]
\psfrag{x1}{\huge $x^1$}
\psfrag{x2}{\huge $x^2$}
\psfrag{xt}{\huge $x^t$}
\psfrag{9}{\huge $X^9$}
\psfrag{a}{\huge $a$}
\psfrag{cdots}{\huge $\cdots$}
\psfrag{13}{\huge $X^{13}$}
\psfrag{15}{\huge $X^{15}$}
\psfrag{16}{\huge $X^{16}$}
\resizebox{8.0cm}{!}{\includegraphics{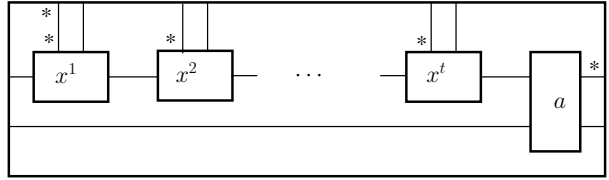}}
\caption{Definition of $\theta$}\label{imtheta}
\end{figure}
It is a straightforward consequence of the definitions that
the restriction of $\theta$ to $T(H)$ is the composition of the
isomorphism of $T(H)$ 
with $Gr_1(P)$ and the inclusion of $Gr_1(P)$ into $Gr_2(P)$, while
the restriction of $\theta$ to the acting $H$ is the natural
isomorphism of $H$ with 
$P_2 \subseteq Gr_2(P)$.
Also $\theta$ is a linear isomorphism since for each $t$, the tangle in Figure \ref{imtheta}
is just $X=X^{t+2}_{2,2,\cdots,2}$ - see Proposition \ref{xprop}(3) -
redrawn slightly differently. 
The crux of the verification of multiplicativity of $\theta$ is seen to reduce to
the equality asserted in Figure \ref{crux}, which is a consequence of the relations
in $P(H)$.
\begin{figure}[!h]
\psfrag{x1}{\huge $x^1$}
\psfrag{x2}{\huge $x^2$}
\psfrag{xt}{\huge $x^t$}
\psfrag{9}{\huge $X^9$}
\psfrag{a}{\huge $a$}
\psfrag{cdots}{\huge $\cdots$}
\psfrag{y1}{\huge $a_1x^1$}
\psfrag{y2}{\huge $a_2x^2$}
\psfrag{yt}{\huge $a_tx^t$}
\psfrag{=}{\huge $=$}
\psfrag{b}{\huge $a_{t+1}$}
\resizebox{8.0cm}{!}{\includegraphics{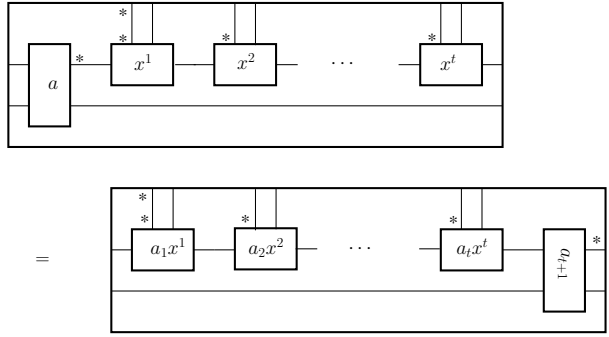}}
\caption{Multiplicativity of $\theta$}\label{crux}
\end{figure}
Finally, $\theta$ preserves $*$ since it does so on $T(H)$ (by
Proposition \ref{gr1p}) and on the acting $H$ (clearly!)
and is multiplicative.
\end{proof}

Since the crossed product algebra $T(H) \rtimes_\alpha H$ is clearly
generated by the two patent copies of $H$, it follows from Proposition
\ref{gr2p} that $Gr_2(P)$ is generated by $P_2 \subseteq Gr_2(P)$
and by the image of $P_2 \subseteq Gr_1(P)$ in $P_3 \subseteq Gr_2(P)$. We will require the following sharpening of this result.
Throughout this section we will denote the  image of $1 \in P_2 \subseteq Gr_1(P)$ in $P_3 \subseteq Gr_2(P)$ by $X$ and note that
pictorially, it is shown in the figure below.
\begin{figure}[!h]
\resizebox{1.0cm}{!}{\includegraphics{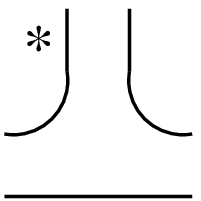}}
\end{figure}

\begin{proposition}\label{gengr2}
The algebra $Gr_2(P)$ is generated by $P_2 \subseteq Gr_2(P)$
and $X \in P_3 \subseteq Gr_2(P)$.
\end{proposition}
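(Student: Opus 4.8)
The plan is to carry the statement across the isomorphism $\theta$ of Proposition \ref{gr2p} and verify the corresponding assertion inside the crossed product $T(H) \rtimes_\alpha H$. Under $\theta$, the subalgebra $P_2 \subseteq Gr_2(P)$ is precisely the acting copy of $H$, consisting of the elements $\mathbf{1} \rtimes a$ with $\mathbf{1}$ the degree-zero unit of $T(H)$ and $a \in H$, while the distinguished element $X$ corresponds to $u \rtimes 1$, where $u = 1_H$ is viewed in the degree-one summand $H^{\otimes 1} \subseteq T(H)$ and the acting part is trivial. The remark preceding the proposition already furnishes that $Gr_2(P)$ is generated by the acting $H$ together with the \emph{entire} degree-one copy $H^{\otimes 1} \rtimes 1 \subseteq P_3$. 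It therefore suffices to show that the single element $X$, together with the acting copy of $H$, already generates all of $H^{\otimes 1} \rtimes 1$.

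The core of the argument is a short Hopf-algebraic computation using the crossed-product multiplication $(w \rtimes a)(w' \rtimes a') = w\,\alpha_{a_1}(w') \rtimes a_2 a'$ and the fact that $\alpha_a$ acts on the degree-one summand by left multiplication and on the degree-zero summand through $\epsilon$. First, $(\mathbf{1} \rtimes a) X = \sum a_1 \rtimes a_2$, with $a_1$ in degree one and $a_2$ acting. I then claim the antipode identity
\[
\sum (\mathbf{1} \rtimes a_1)\, X\, (\mathbf{1} \rtimes S(a_2)) = a \rtimes 1 \qquad (a \in H).
\]
Expanding the left side and applying coassociativity rewrites it as $\sum a_1 \rtimes a_2 S(a_3)$, and the antipode relation $\sum b_1 S(b_2) = \epsilon(b) 1_H$ applied to the last two legs, followed by the counit identity on the first, collapses this to $a \rtimes 1$. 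Since the left side manifestly lies in the algebra generated by $X$ and the acting $H$, this recovers every element of the degree-one copy $H^{\otimes 1} \rtimes 1$. Invoking the generation statement recalled in the first paragraph then finishes the proof; translating back through $\theta$ gives the proposition.

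I expect the only genuine point requiring care to be the displayed antipode identity, i.e.\ confirming that twisting the single generator $u = 1_H$ by the coproduct and antipode of $H$ regenerates the full copy of $H$ sitting in $P_3$. Everything else is forced bookkeeping with the crossed-product relations once $\theta$ and the preceding remark are in hand; in particular, one could alternatively verify the identity diagrammatically in $P(H)$ using the relations of Figure \ref{rels}, but the crossed-product formulation makes the role of the antipode transparent.
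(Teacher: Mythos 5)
Your proof is correct, and it takes a genuinely different route from the paper at the key step. Both arguments start from the same reduction — the sentence following Proposition \ref{gr2p}, which says $Gr_2(P)$ is generated by $P_2 \subseteq Gr_2(P)$ together with the image of $P_2 \subseteq Gr_1(P)$ in $P_3$ (your ``degree-one copy $H^{\otimes 1}\rtimes 1$'') — but they diverge afterwards. The paper stays diagrammatic: for $a,b \in P_2 \subseteq Gr_2(P)$ it draws the tangle for $aXb$ and asserts that such elements span \emph{all} of $P_3$, citing the depth-2 property of $P(H)$ as ``easily verified.'' You instead transport everything into $T(H)\rtimes_\alpha H$ and prove the exact statement needed, namely that the degree-one copy lies in the algebra generated by $X = 1_H \rtimes 1$ and the acting $H$, via the identity $\sum (\mathbf{1}\rtimes a_1)\,X\,(\mathbf{1}\rtimes S(a_2)) = a\rtimes 1$; your verification of that identity (coassociativity, then $\sum b_1S(b_2)=\epsilon(b)1_H$ on the last two legs, then the counit axiom) is sound, as is the bookkeeping $(\mathbf{1}\rtimes a)X = \sum a_1\rtimes a_2$ and $(w\rtimes b)(\mathbf{1}\rtimes c) = w\rtimes bc$. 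The two proofs are really two sides of one fact: in the crossed-product picture the paper's elements $aXb$ become $\sum a_1 \rtimes a_2b$, so the paper's spanning claim is the surjectivity of the Galois-type map $a\otimes b \mapsto \Delta(a)(1\otimes b)$ — which is what the depth-2 property encodes — while your antipode identity exhibits its explicit inverse. What each buys: the paper's version yields the stronger conclusion that $\{aXb\}$ spans the whole of $P_3$, but leaves the spanning claim to the reader; yours proves only what the proposition requires, but is self-contained, using nothing beyond the Hopf algebra axioms and the crossed-product relations already set up in Proposition \ref{gr2p}.
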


\begin{proof} From (the sentence immediately following) Proposition
  \ref{gr2p}, it suffices to verify that the image of $P_2 \subseteq
  Gr_1(P)$ in $P_3 \subseteq Gr_2(P)$ 
is contained in the subalgebra of $Gr_2(P)$ generated by $P_2$
and $X$. 
However for any $a,b \in P_2 \subseteq Gr_2(P)$, notice that $aXb$ is
given by the tangle in Figure \ref{axb}.
\begin{figure}[!htb]
\psfrag{a}{\huge $a$}
\psfrag{b}{\huge $b$}
\resizebox{3.0cm}{!}{\includegraphics{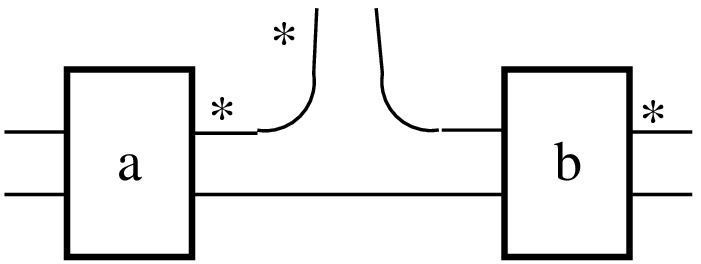}}
\caption{}\label{axb}
\end{figure}
Elements of this kind are easily verified to span the whole of $P_3$ using the depth 2
property of $P(H)$.
\end{proof}

The main combinatorial fact underlying the determination of $M_2$
is that the algebra $P_2$ and the algebra generated by $X$ are
free in it, which is what we will establish next. Recall that
$\tau_2 = \delta^{-2}Tr_2$ is a normalised trace on $Gr_2(P)$.
We will denote the associated free cumulants by $\kappa_*(\cdots)$.

Note that $(Gr_1(P),\tau_1)$ is a non-commutative probability subspace of
$(Gr_2(P),\tau_2)$ and so the free cumulants of $X$ in $Gr_2(P)$
are the same as those of $1 \in P_2 \subseteq Gr_1(P)$.
Since $1 = triv_{11}$ where $triv \in \widehat{H^*}$ is the
trivial representation of $H^*$, it follows - from Corollary \ref{useful} - that $\kappa_t(X,X,\cdots,X) = \delta^{t-1}$, or equivalently, that $X$ is free Poisson with rate $\delta^{-1}$
and jump size $\delta$.

\begin{proposition}\label{xp2free}
The algebra generated by $X$ and the algebra $P_2 \subseteq Gr_2(P)$
are free in the non-commutative probability space $(Gr_2(P),\tau_2)$.
\end{proposition}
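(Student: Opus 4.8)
The plan is to deduce freeness from Speicher's cumulant criterion, Theorem \ref{cumulant}. The algebra generated by $X$ has the single-element set $\{X\}$ as a set of generators, while $P_2\subseteq Gr_2(P)$ generates itself; so by Theorem \ref{cumulant} these two algebras are free in $(Gr_2(P),\tau_2)$ exactly when every \emph{mixed} free cumulant vanishes, i.e. $\kappa_k(w^1,\dots,w^k)=0$ whenever each $w^t$ is either $X$ or an element of $P_2$ and the list $w^1,\dots,w^k$ is neither constantly $X$ nor entirely contained in $P_2$. (This is the purely algebraic notion of freeness, so Lemma \ref{dc} is not needed for the statement itself, only afterwards to pass to the generated von Neumann algebras.) Thus the entire proposition reduces to this one vanishing assertion, and the cumulants of $X$ alone are already known to be $\kappa_t(X,\dots,X)=\delta^{t-1}$ by the remark preceding the statement.

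To get at these mixed cumulants I would reproduce, for $Gr_2(P)$, the computation carried out in the proof of Proposition \ref{freecumulants}. Concretely, I would first compute the relevant moments $\tau_2(\text{word in }X\text{ and }P_2)$ by drawing the product tangle in $Gr_2(P)$, capping it off with the trace tangle $T$ that defines $Tr_2$, and then expanding the Temperley--Lieb sum $T_\bullet$ into its constituent diagrams $TL(\pi)$; reading the black regions as in Figure \ref{tlncpcorr} rewrites each such moment as a sum of planar expressions indexed by non-crossing partitions. Feeding these moments into the Möbius-inversion formula over $NC(k)$ (the implication $(1)\Rightarrow(2)$ of Theorem \ref{Mobius}) then produces a single ``connected'' ($1_k$) tangle for $\kappa_k(w^1,\dots,w^k)$, in exact parallel with the passage from Figure \ref{fig:expression} to Figure \ref{fig:kappafig}. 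It is worth keeping the crossed-product description $Gr_2(P)\cong T(H)\rtimes_\alpha H$ of Proposition \ref{gr2p} in mind throughout, since under it $X$ is the identity in degree one of the $T(H)$ factor while $P_2$ is the transverse acting copy of $H$.

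The main obstacle is the last step: showing that this connected tangle is identically zero as soon as the arguments are of mixed type. The mechanism I would exploit is that $X$ is the image of $1\in P_2\subseteq Gr_1(P)$ and hence appears in every tangle as an \emph{unlabelled} box (a cabled identity strand) sitting at depth one, whereas the elements of $P_2\subseteq Gr_2(P)$ are labelled by $H$ and sit at depth zero. In the single-block configuration produced by Möbius inversion, the strands emanating from the $X$-boxes must be joined by Temperley--Lieb arcs; for an alternating word such an arc is forced either to split the picture into two pieces --- which is incompatible with the connectedness of the $1_k$ term and therefore contributes nothing --- or to cap off a label $a\in P_2$, which by the relations of Figure \ref{rels} (the $\delta^{-1}\epsilon$-cap and the $\delta\phi$ relation) replaces the offending block by a scalar times a strictly smaller, and ultimately disconnected, diagram. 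Tracking these reductions should show that every mixed cumulant cancels to zero, which is precisely the required freeness; the careful planar bookkeeping here is the real content of the proof. Once this is established, $M_2$ is the free product of $\langle X\rangle''\cong\underset{1-\delta^{-1}}{\mathbb C}\oplus\underset{\delta^{-1}}{L\mathbb Z}$ (Proposition \ref{poisson}) with $P_2''\cong H$, and Proposition \ref{dyk3} yields Theorem \ref{main2}.
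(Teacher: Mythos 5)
Your overall strategy coincides with the paper's: verify Speicher's criterion (Theorem \ref{cumulant}) by showing all mixed free cumulants of $X$ and elements of $P_2$ vanish, starting from a tangle computation of the moments $\tau_2(X^1\cdots X^t)$. But there is a genuine gap at the step you describe as ``in exact parallel with the passage from Figure \ref{fig:expression} to Figure \ref{fig:kappafig}.'' That parallel breaks down, for a structural reason: in $Gr_1(P)$ every argument $x^i\in P_2$ sends exactly one (cabled) strand into the Temperley--Lieb closure, so the moment expands as a sum over $NC(t)$ in which each $\pi$-term factors multiplicatively over the classes of $\pi$ into ``connected'' sub-tangles; M\"obius inversion then literally identifies $\kappa_t$ with the connected ($1_t$) tangle. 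In $Gr_2(P)$, however, an element of $P_2$ has degree zero for the grading relevant to $Tr_2$ --- it contributes \emph{no} strands to the TL closure --- and only the $X$-boxes do. Consequently the moment of a mixed word expands as a sum over $NC(D)$, where $D$ is the set of positions occupied by $X$, not over $NC(k)$; the $P_2$-boxes get grouped only indirectly, into a partition $\tilde{\pi}$ of $E=[k]\setminus D$ read off from the connected components of the $\pi$-term. There is therefore no ``single connected tangle'' representing a mixed cumulant, and your proposed vanishing mechanism (arcs either disconnecting the $1_k$ picture or capping off labels) is not a well-posed statement that one could check; mixed cumulants here do not vanish because some tangle evaluates to zero, but because the moments satisfy a moment--cumulant recursion with respect to a multiplicative family whose mixed entries are zero.

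Concretely, what is missing --- and what the paper supplies --- is: (a) the identification of the $\pi$-term as $\delta^{N(\pi)}\phi_{\tilde{\pi}}(X^e : e\in E)$, together with the facts that $\pi\cup\tilde{\pi}\in NC(t)$ and that $\tilde{\pi}$ is the coarsest partition of $E$ with this property; (b) the loop count $N(\pi)-2=|D|-|\pi|$ (the paper's Proposition \ref{euler}), which makes the $\delta$-powers multiplicative over the classes of $\pi$; and (c) the resummation: expanding $\phi_{\tilde{\pi}}=\sum_{\rho\leq\tilde{\pi}}\kappa_\rho$ and checking that the pairs $(\pi,\rho)$ with $\rho\leq\tilde{\pi}$ biject with exactly those $\lambda\in NC(t)$ all of whose classes lie entirely in $D$ or entirely in $E$, so that $\tau_2(X^1\cdots X^t)=\sum_{\lambda\in NC(t)}\tilde{\kappa}_\lambda$ for the multiplicative family $\tilde{\kappa}$ with $\tilde{\kappa}$ equal to $\delta^{t-1}$ on pure $X$-words, to $\kappa_t$ on pure $P_2$-words, and to $0$ on mixed words. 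Only then does M\"obius inversion (uniqueness of the cumulant family) give $\kappa_\lambda=\tilde{\kappa}_\lambda$ and hence the vanishing of mixed cumulants. Your write-up correctly flags that ``the careful planar bookkeeping here is the real content of the proof,'' but that bookkeeping is precisely steps (a)--(c), and the heuristic you offer in its place would not produce them.
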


\begin{proof}

Consider the problem of calculating $Tr_2(X^1X^2\cdots X^t)$ where each
$X^i \in P_2 \cup \{X\}$. Let $D = \{i \in [t]: X^i = X\}$ and
$E = \{i \in [t]: X^i \in P_2\}$ so that these are complementary
sets in $[t]$. 

We illustrate with an example. 
Suppose 
$t=16$ and $D = \{1,3,4,5,8,12,14,15\}$ so that $E =
\{2,6,7,9,10,11,13,16\}$. The product $\prod_{i=1}^{15} X^i$ in
$Gr_2(P)$ is is given by the
tangle in Figure \ref{chain}
\begin{figure}[!h]
\psfrag{2}{\huge $X^2$}
\psfrag{6}{\huge $X^6$}
\psfrag{7}{\huge $X^7$}
\psfrag{9}{\huge $X^9$}
\psfrag{10}{\huge $X^{10}$}
\psfrag{11}{\huge $X^{11}$}
\psfrag{13}{\huge $X^{13}$}
\psfrag{15}{\huge $X^{15}$}
\psfrag{16}{\huge $X^{16}$}
\resizebox{12.0cm}{!}{\includegraphics{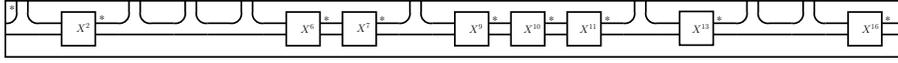}}
\caption{$\prod_{i=1}^{15} X^i$}\label{chain}
\end{figure}
and its trace is given by the sum over all $\pi \in NC(D)$ of the tangle
in Figure \ref{trchain}.
\begin{figure}[!h]
\psfrag{2}{\huge $X^2$}
\psfrag{6}{\huge $X^6$}
\psfrag{7}{\huge $X^7$}
\psfrag{9}{\huge $X^9$}
\psfrag{10}{\huge $X^{10}$}
\psfrag{11}{\huge $X^{11}$}
\psfrag{13}{\huge $X^{13}$}
\psfrag{15}{\huge $X^{15}$}
\psfrag{16}{\huge $X^{16}$}
\psfrag{text}{\huge $TL(\pi)$}
\resizebox{12.0cm}{!}{\includegraphics{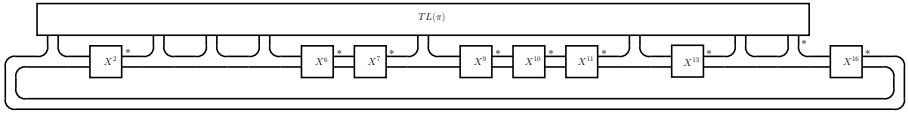}}
\caption{The $\pi$-term of $Tr_2(\prod_{i=1}^{15} X^i)$}\label{trchain}
\end{figure}
We will fix a $\pi \in NC(D)$ and analyse the $\pi$-term of the sum.
Again, an illustrative example will help. So we consider $\pi = 
\{\{1,5\},\{3,4\},\{8,14,15\},\{12\}\}$. Then the $\pi$-term is illustrated
in Figure \ref{piterm}.
\begin{figure}[!h]
\psfrag{2}{\huge $X^2$}
\psfrag{6}{\huge $X^6$}
\psfrag{7}{\huge $X^7$}
\psfrag{9}{\huge $X^9$}
\psfrag{10}{\huge $X^{10}$}
\psfrag{11}{\huge $X^{11}$}
\psfrag{13}{\huge  $X^{13}$}
\psfrag{15}{\huge $X^{15}$}
\psfrag{16}{\huge $X^{16}$}
\resizebox{12.0cm}{!}{\includegraphics{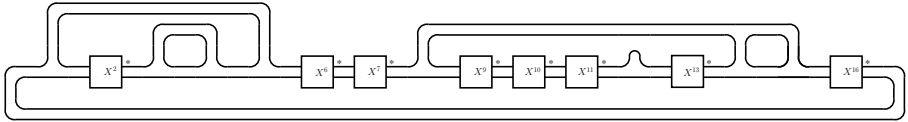}}
\caption{The $\{\{1,5\},\{3,4\},\{8,14,15\},\{12\}\}$-term of $Tr_2(\prod_{i=1}^{15} X^i)$}\label{piterm}
\end{figure}

Note that the $\pi$-term has several floating loops, each contributing
a multiplicative factor of $\delta$. 
Now remove the floating loops and the innermost string connecting all
the boxes $X^i$ for $i \in E$ to get Figure \ref{disjointed}.
\begin{figure}[!h]
\psfrag{2}{\huge $X^2$}
\psfrag{6}{\huge $X^6$}
\psfrag{7}{\huge $X^7$}
\psfrag{9}{\huge $X^9$}
\psfrag{10}{\huge $X^{10}$}
\psfrag{11}{\huge $X^{11}$}
\psfrag{13}{\huge $X^{13}$}
\psfrag{15}{\huge $X^{15}$}
\psfrag{16}{\huge $X^{16}$}
\resizebox{12.0cm}{!}{\includegraphics{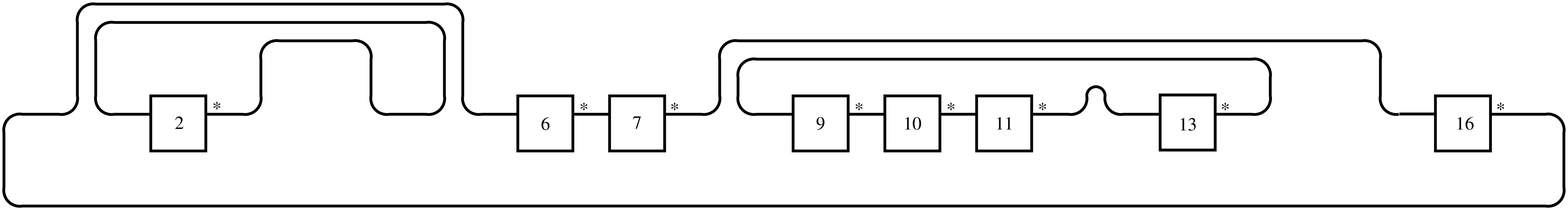}}
\caption{Disconnecting Figure \ref{piterm}}\label{disjointed}
\end{figure}

There are several connected components, each of which loops some
of the boxes $X^i$, $i \in E$ together and so defines a partition of $E$. 
Denote this partition by $\tilde{\pi}$.
A little thought should convince the reader that $\pi \cup \tilde{\pi}$
is a non-crossing partition of $[t]$ and that $\tilde{\pi}$ is 
coarser than any partition of $E$ with this property.

In our example, $\tilde{\pi} =
\{\{2\},\{6,7,16\},\{9,10,11,13\}\}$. By irreducibility of the planar
algebra $P$, any class, say $C$, of $\tilde{\pi}$ contributes a
multiplicative factor of $\delta \phi(\prod_{c \in C} X^c)$ (where the
product is taken with the $X^c$ listed in increasing order) to the
$\pi$-term. It follows that the $\pi$-term evaluates to
$\delta^{N(\pi)} \phi_{\tilde{\pi}}(X^e : e \in E)$, where $N(\pi)$ is
the number of loops in the figure obtained from Figure \ref{piterm} by
replacing all the $X^i, i \in E$ by $1_2 \in P_2$.  This latter figure
is shown below.

\begin{figure}[!h]
\psfrag{2}{\huge $X^2$}
\psfrag{6}{\huge $X^6$}
\psfrag{7}{\huge $X^7$}
\psfrag{9}{\huge $X^9$}
\psfrag{10}{\huge $X^{10}$}
\psfrag{11}{\huge $X^{11}$}
\psfrag{13}{\huge $X^{13}$}
\psfrag{15}{\huge $X^{15}$}
\psfrag{16}{\huge $X^{16}$}
\resizebox{12.0cm}{!}{\includegraphics{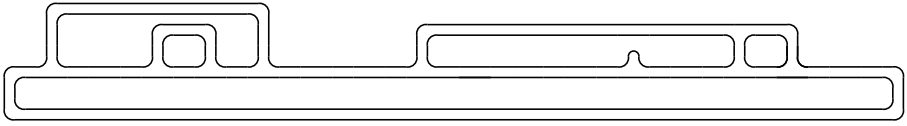}}
\caption{Replacing all $X^i$ in Figure \ref{piterm} by $1_2$}\label{loops}
\end{figure}

Therefore $Tr_2(X^1X^2\cdots X^t) = 
\sum_{\pi \in NC(D)} \delta^{N(\pi)} \phi_{\tilde{\pi}}(X^e : e \in E)$ and hence:

%



\begin{eqnarray*}
\tau_2(X^1X^2\cdots X^t) &=& \sum_{\pi \in NC(D)} \delta^{N(\pi)-2} \phi_{\tilde{\pi}}(X^e : e \in E)\\
&=&
\sum_{\pi \in NC(D)} \delta^{|D|-|\pi|} \phi_{\tilde{\pi}}(X^e : e \in E)\\
&=& \sum_{\pi \in NC(D)} \delta^{|D|-|\pi|} \left(\sum_{\rho \in NC(E),\rho \leq \tilde{\pi}} \kappa_\rho(X^e : e \in E)\right)
\end{eqnarray*}
where the second equality follows from Proposition \ref{euler} below,
and the third equality is by $(3)$ of Theorem~\ref{Mobius}. 

We now assert that 
$$
\tau_2(X^1X^2\cdots X^t) = \sum_{\lambda \in NC(t)} \tilde{\kappa}_\lambda(X^1,\cdots,X^t),
$$
where $\tilde{\kappa}_\lambda$ is the multiplicative extension of
$\{\tilde{\kappa}_t : (P_2 \cup \{X\})^t \rightarrow {\mathbb C}\}_{t \in {\mathbb N}}$
defined by
\begin{equation*}
\tilde{\kappa}_t(X^1,\cdots,X^t) = 
\left\{ 
\begin{array}{ll}
                   \delta^{t-1} & {\text {if\ all\ }} X^i = X \\
                    \kappa_t(X^1,\cdots,X^t) & {\text {if\ all\ }} X^i \in P_2\\
                    0 & {\text {otherwise.}}
        \end{array}
\right.
\end{equation*}
To prove this assertion, note that the only $\lambda \in NC(t)$ that contribute to the sum are those 
of the form $\pi \cup \rho$ where $\pi \in NC(D)$, $\rho \in NC(E)$ and
$\rho \leq \tilde{\pi}$,
and the corresponding term is exactly $\delta^{|D|-|\pi|} \kappa_\rho(X^e : e \in E)$.

But now, M\"{o}bius inversion implies that $\tilde{\kappa}_{\pi} = \kappa_{\pi}$ and Theorem \ref{cumulant} then proves the desired freeness
of $P_2$ and the algebra generated by $X$ in $Gr_2(P)$.
\end{proof}

\begin{proposition}\label{euler}
Let $n \in {\mathbb N}$ and $\pi \in NC(n)$.
By $L(\pi)$, we will denote the $0$-tangle in Figure \ref{lpi}. Let
$N(\pi)$ be the
number of loops in $L(\pi)$.
Then, $N(\pi) -2 = n - |\pi|$.
\begin{figure}[!h]
\psfrag{tlpi}{\huge $TL(\pi)$}
\psfrag{cdots}{\huge $\cdots$}
\psfrag{7}{\huge $X^7$}
\psfrag{9}{\huge $X^9$}
\psfrag{10}{\huge $X^{10}$}
\psfrag{11}{\huge $X^{11}$}
\psfrag{13}{\huge $X^{13}$}
\psfrag{15}{\huge $X^{15}$}
\psfrag{16}{\huge $X^{16}$}
\resizebox{6.0cm}{!}{\includegraphics{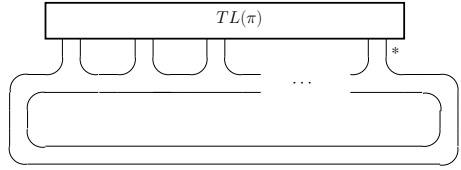}}
\caption{The $0$-tangle $L(\pi)$}\label{lpi}
\end{figure}
\end{proposition}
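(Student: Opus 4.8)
The form of the claimed identity is the key hint. Rewriting it as $|\pi| - n + N(\pi) = 2$ and recognising $2$ as the Euler characteristic $\chi(S^2)$ of the sphere on which the $0$-tangle $L(\pi)$ of Figure \ref{lpi} is drawn, the plan is to realise $L(\pi)$ as the $1$-skeleton of a CW decomposition of $S^2$ for which $V = |\pi|$, $E = n$ and $F = N(\pi)$; the assertion is then exactly Euler's relation $V - E + F = \chi(S^2) = 2$. Concretely, I would use the Temperley--Lieb/non-crossing-partition dictionary recalled before the proof of Proposition \ref{freecumulants}: the $|\pi|$ black regions of $L(\pi)$ are in bijection with the blocks of $\pi$ and furnish the vertices, each of the $n$ trivial boxes $X^i$ furnishes one edge, and the $N(\pi)$ loops of $L(\pi)$ bound the faces. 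The two things to extract from the picture are then the precise incidences (which black regions each box-edge joins) and the connectedness of the resulting graph, the latter being what makes $V - E + F$ equal $2$ rather than $1$ plus the number of connected components.

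A more hands-on alternative, which avoids having to describe the whole CW structure at once, is to induct exactly as in the proof of Proposition \ref{freecumulants}, by repeatedly peeling off an interval block. Since every $\pi \in NC(n)$ has a block $C = [k,l]$ that is an interval, let $m = |C|$ and let $\pi'$ be the non-crossing partition of the remaining $n - m$ points obtained by deleting $C$. Deleting $C$ decreases $n$ by $m$ and $|\pi|$ by $1$, so the quantity $n - |\pi|$ drops by $m - 1$; thus it suffices to show that erasing the sub-picture associated with $C$ (the nested rainbow of $TL(\pi)$ capping the $m$ consecutive trivial boxes $X^k, \ldots, X^l$) removes exactly $m - 1$ loops and leaves precisely $L(\pi')$. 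With the base case $n = 0$, where the diagram is the trace closure $Tr_2(1)$ and visibly has $2$ loops, the invariant $N(\pi) = n - |\pi| + 2$ then propagates.

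I expect the genuine content in either route to be the same local topological count, packaged differently, and that this is the main obstacle. For the inductive version it is the claim that a nested rainbow capping $m$ trivial boxes in series contributes exactly $m - 1$ closed loops and is otherwise loop-count-neutral (the case $m = 1$, where a single capped identity box must be removable without changing the loop count, is the sanity check worth drawing first); for the Euler version it is the verification that the boxes really do give $n$ edges of a connected graph whose faces are the loops. In both cases the step is purely a matter of reading Figure \ref{lpi} carefully, and I would settle it by drawing the rainbow-over-boxes sub-picture and tracking how many loops close off as the block size grows.
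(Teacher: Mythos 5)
Your ``hands-on alternative'' is, in fact, the paper's own proof. The paper argues by induction on $|\pi|$ (base case $|\pi|=1$ rather than your $n=0$ --- an immaterial difference): it picks an interval class $C=[k,l]$, observes that near $C$ the tangle $L(\pi)$ is the nested rainbow of Figure \ref{nbd}, removes the $l-k=|C|-1$ closed loops this rainbow creates over the boxes of $C$, and notes that what remains is exactly $L(\pi_S)$ for $\pi_S=\pi|_S$, $S=[n]\setminus C$; the bookkeeping $N(\pi_S)=N(\pi)-(|C|-1)$, $|\pi_S|=|\pi|-1$, $|S|=n-|C|$ then closes the induction, precisely as in your computation. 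The local count you isolate as the crux --- a rainbow capping $m$ consecutive boxes in series contributes exactly $m-1$ loops and is otherwise neutral --- is exactly the step the paper settles by inspecting the picture, so your diagnosis of where the content lies is accurate. Your first, Euler-characteristic route is genuinely different in character: it would prove the identity in one global stroke and, unlike the induction, explains the constant $2$ conceptually as $\chi(S^2)$. But as written it is a plan rather than a proof, and the deferred verification is not a formality: one must specify the incidences so that the resulting graph is connected, since for a disconnected graph embedded in $S^2$ one has $V-E+F=1+(\text{number of components})$, and a careless assignment of box-edges to blocks (e.g.\ for $\pi=\{\{1\},\{2\}\}$, making each box a loop-edge at its own block) produces the wrong count; the closure strands of the $0$-tangle must be woven into the cell structure to rule this out. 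So the two routes trade the same planarity content in different currencies: the paper's induction localizes all topology into one elementary picture check repeated $|\pi|$ times, while your Euler argument globalizes it into a single CW-structure verification that is slicker once done but is the step you have not done.
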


\begin{proof} 
The proof is an induction on the number of classes $|\pi|$ of
$\pi$. The basis case $|\pi| =1$ being easily proved, we consider the
case $|\pi|>1$. Since any non-crossing partition has a class that
is an interval, let $C= [k,l], 1 \leq k \leq l \leq n$ be such a class of $\pi$ and let $S$ denote
the complement of $C$ in $[n]$.
In a `neighbourhood' of $C$, the tangle $L(\pi)$ looks as in Figure \ref{nbd}.
\begin{figure}[!h]
\psfrag{tlpi}{\huge $TL(\pi)$}
\psfrag{cdots}{\huge $\cdots$}
\psfrag{k}{\Large $k$}
\psfrag{k+1}{\Large $k+1$}
\psfrag{k+2}{\Large $k+2$}
\psfrag{l-1}{\Large $l-1$}
\psfrag{l}{\Large $l$}
\psfrag{15}{\huge $X^{15}$}
\psfrag{16}{\huge $X^{16}$}
\resizebox{5.0cm}{!}{\includegraphics{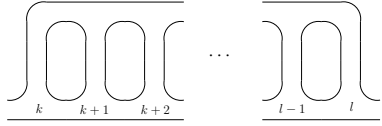}}
\caption{A `neighbourhood' of the class $C = [k,l]$ of $\pi$}\label{nbd}
\end{figure}
After removing the $l-k=|C|-1$ loops between $k$ and $l$,
it should be clear that what remains is the tangle $L(\pi_S)$
where $\pi_S = \pi|_S$. 
Hence, $N(\pi_S) = N(\pi) - (|C|-1)$ while $|\pi_S| = |\pi|-1$ and
$|S| = n - |C|$. The proof is complete by induction.
\end{proof}

We can now identify the factor $M_2$ as an interpolated free group
factor.

\begin{proof}[Proof of Theorem \ref{main2}]
By Proposition \ref{gengr2}, the algebra $Gr_2(P)$ is generated
by $P_2$ and $X \in P_3$ and so the factor $M_2$ is generated
as a von Neumann algebra by these. Since $X$ is free Poisson with
rate $\delta^{-1}$ and jump size $\delta$, the von Neumann
algebra it generates is isomorphic to 
$\underset{\ \ 1-\delta^{-1}}{\mathbb C} \oplus 
\underset{\delta^{-1}}{L{\mathbb Z}}$. Since this von Neumann algebra
and $P_2$ are free in $M_2$ by Proposition \ref{xp2free} and Lemma \ref{dc}, it follows that 
\begin{eqnarray*}
M_2 &\cong& (\underset{1-\delta^{-1}}{\mathbb C} \oplus 
\underset{\delta^{-1}}{L{\mathbb Z}}) * P_2\\
&\cong & LF(2\delta^{-1} -\delta^{-2} + 1 - \frac{1}{n})\\
&\cong & LF(\frac{2}{\sqrt{n}} - \frac{2}{n} +1),
\end{eqnarray*}
where the second isomorphism is a consequence of Proposition \ref{dyk3}.
\end{proof}

\section{Conclusion}

\begin{theorem}\label{main}
Let $H$ be a finite dimensional Kac algebra of dimension $n > 1$, $P = P(H)$ be its planar algebra and $M_0 \subseteq
M_1 \subseteq \cdots$ be the tower of factors associated
to $P$ by the GJS-construction.
Then, $M_0 \cong LF({2n\sqrt{n} - 2n +1})$ and $M_1 \cong LF({2\sqrt{n}-1})$.
\end{theorem}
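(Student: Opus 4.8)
The statement about $M_1$ is verbatim Theorem~\ref{main1}, so no further work is needed there; all the content of the final theorem lies in identifying $M_0$. The plan is not to analyse $Gr_0(P)$ directly (as was done for $M_1$ and $M_2$), but instead to read off $M_0$ from the already-computed $M_2$ by exploiting the basic-construction structure of the tower provided by the GJS theorem of \S1, and then to apply the ampliation/compression formula of Proposition~\ref{dykrdl}(2).

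First I would invoke the GJS theorem (from \cite{GnnJnsShl2008}): the tower $M_0 \subseteq M_1 \subseteq M_2$ is the beginning of the Jones basic-construction tower of the extremal subfactor $M_0 \subseteq M_1$, whose index is $\delta^2 = n$. Hence $M_2 = \langle M_1, e\rangle$ where $e = e_{M_0}$ is the Jones projection implementing the trace-preserving conditional expectation $E \colon M_1 \to M_0$. The standard identities $exe = E(x)e$ for $x \in M_1$ and $M_2 = \overline{\mathrm{span}}\,(M_1 e M_1)$ give $eM_2e = E(M_1)e = M_0 e$, and since $e$ commutes with $M_0$ the map $x \mapsto xe$ is a trace-preserving isomorphism of $M_0$ onto $eM_2e$. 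As $e$ has trace $[M_1:M_0]^{-1} = n^{-1}$ in the canonical (Markov) trace of the tower, this identifies $M_0$ with the compression $(M_2)_{1/n}$ in the notation of Proposition~\ref{dykrdl}.

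It then remains only to compute this compression. By Theorem~\ref{main2} we have $M_2 \cong LF(r)$ with $r = \tfrac{2}{\sqrt n} - \tfrac{2}{n} + 1$, and one checks that $r > 1$ for $n > 1$. Applying Proposition~\ref{dykrdl}(2) with $\alpha = 1/n$ yields
$$
M_0 \cong (M_2)_{1/n} \cong LF\left(\frac{r-1}{(1/n)^2}+1\right) = LF\left(n^2\left(\frac{2}{\sqrt n}-\frac{2}{n}\right)+1\right) = LF(2n\sqrt n - 2n + 1),
$$
which together with Theorem~\ref{main1} completes the proof.

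The one genuinely non-routine point is the reduction $M_0 \cong (M_2)_{1/n}$; everything after it is a direct appeal to the preceding results plus a short arithmetic simplification. That reduction is pure basic-construction theory, but the detail requiring care is that the trace with respect to which the compression is taken must be the canonical trace of the tower — i.e.\ the one induced by $\tau_2$ — so that $\mathrm{tr}(e_{M_0})$ is exactly $1/n$ and Proposition~\ref{dykrdl}(2) applies with $\alpha = 1/n$. No new analytic input beyond Theorems~\ref{main1} and \ref{main2} and Proposition~\ref{dykrdl} is needed.
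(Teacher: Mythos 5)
Your proposal is correct and takes essentially the same route as the paper: both handle $M_1$ by citing Theorem~\ref{main1}, identify $M_0$ with the compression $(M_2)_{\frac{1}{n}}$ using the fact that the GJS tower is a basic-construction tower of index $n$, and then compute via Theorem~\ref{main2} and Proposition~\ref{dykrdl}(2). The only difference is one of detail: the paper simply asserts $M_2 \cong M_n(M_0)$, whereas you spell out the standard corner argument $eM_2e = M_0e$ with $\mathrm{tr}(e) = \frac{1}{n}$ that justifies this identification.
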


\begin{proof} The statement about $M_1$ is contained in Theorem \ref{main1}. For $M_0$,
since the tower $M_0 \subseteq
M_1 \subseteq \cdots$ of factors of the GJS-construction is a basic
construction tower with index $n$, the factor $M_2 \cong M_n(M_0)$
or equivalently, $M_0 \cong (M_2)_{\frac{1}{n}}$. By Theorem \ref{main2} and Proposition
\ref{dykrdl}(2) this is computed to be $LF({2n\sqrt{n} - 2n +1})$.
\end{proof}

\begin{remark} If $N \subseteq M$ is a finite index subfactor and
$\alpha > 0$, then the $\alpha$-ampliation subfactor $N_\alpha
\subseteq M_\alpha$ has the same standard invariant (planar
algebra) as $N \subseteq M$. Since all the finite interpolated free group factors $LF(r)$ are ampliations
of each other by Proposition
\ref{dykrdl}(2), our main theorem implies that any $LF(r)$ for
$1 < r < \infty$ is universal
for planar algebras of depth 2, in the sense that given such a planar
algebra it is the planar algebra of a subfactor of $LF(r)$.
\end{remark}

In the light of the previous remark and the results of \cite{PpaShl2003}
on the universality of $LF(\infty)$ it is tempting - and we yield to the
temptation - to conjecture the following.

\begin{conjecture} Any finite interpolated free group factor $LF(r)$
is universal for finite depth subfactor planar algebras.
\end{conjecture}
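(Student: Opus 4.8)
The plan is to reduce the conjecture, via the ampliation argument already used in the Remark above, to a single structural statement about the GJS construction for arbitrary finite-depth planar algebras, and then to attack that statement by extending the free-probabilistic analysis of \S 4 and \S 5. Let $P$ be any subfactor planar algebra of finite depth and modulus $\delta$, and let $M_0 \subseteq M_1$ be its GJS subfactor, so that $P$ is the planar algebra of $M_0 \subseteq M_1$. Suppose one knows that $M_1 \cong LF(r_1)$ for some finite $r_1 > 1$ (this is the crux, discussed below). Given any target $r > 1$, set $\alpha = \sqrt{(r_1-1)/(r-1)}$; then Proposition \ref{dykrdl}(2) gives $(M_1)_\alpha \cong LF(r)$, while $(M_0)_\alpha$ is again a finite interpolated free group factor. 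Since ampliation by a common $\alpha$ preserves the standard invariant, $(M_0)_\alpha \subseteq (M_1)_\alpha = LF(r)$ is a subfactor of $LF(r)$ whose planar algebra is $P$. Thus the conjecture follows at once from the claim that the GJS factors of every finite-depth $P$ are finite interpolated free group factors.

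The crux is therefore to push the strategy of \S 4 and \S 5 past depth two. The depth-two argument rested on three special features of $P(H)$: the clean identifications $Gr_1(P) \cong T(H)$ (Proposition \ref{gr1p}) and $Gr_2(P) \cong T(H) \rtimes_\alpha H$ (Proposition \ref{gr2p}); the explicit planar-algebraic evaluation of the free cumulants on the generators $P_2$ (Proposition \ref{freecumulants}); and the reduction of each block to a free-Poisson, uniformly $R$-cyclic matrix (Corollary \ref{useful}, Theorem \ref{rcyclic}), whose free product over the irreducibles $\gamma$ collapses through the Dykema formulas to a single $LF(r)$. For general finite depth I would first re-derive the trace $\tau_k$ on $Gr_k(P)$ as a sum over non-crossing partitions weighted by planar-algebraic scalars, exactly as in Proposition \ref{freecumulants}, in order to compute the free cumulants of a natural generating set. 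I would then organize these generators along the principal graph $\Gamma$ of $P$, expecting free-Poisson (or, after symmetrization, semicircular) elements indexed by the edges of $\Gamma$ and matrix units indexed by its vertices, and establish that the resulting family is free. If this succeeds, $M_0$ and $M_1$ become free products of amplifications of copies of $L\mathbb{Z}$ and of matrix algebras weighted by the Perron-Frobenius data of $\Gamma$, which then collapse to a single finite $LF(r)$ by the free-product computations of \cite{Dyk1994}.

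The main obstacle is the freeness and the precise distribution of the generators in the absence of the Kac-algebra structure. In the depth-two case, irreducibility forced each interval class of a non-crossing partition to contribute a single scalar (the computation behind Proposition \ref{freecumulants}), and the matrix of generators was \emph{exactly} uniformly $R$-cyclic, so that Theorem \ref{rcyclic} delivered freeness for free. For a general principal graph the cumulant bookkeeping is governed by the fusion data of $\Gamma$ rather than by one algebra $H$, and the clean $R$-cyclic characterization is unavailable. I expect one must replace the purely cumulant-theoretic argument by an asymptotic-freeness or random-matrix model of the GJS algebra built on $\Gamma$, in the spirit of the original estimates of \cite{GnnJnsShl2008}, from which both the freeness and the semicircular distributions of the generators would follow. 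Once that structural input is in place, the computation of $r$ from $\Gamma$ and the ampliation reduction of the first paragraph complete the argument; everything outside that single structural step is routine given the machinery of \S 3.

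I emphasize that, as stated, this is a program rather than a completed proof: the asymptotic-freeness input just described is precisely the point at which the depth-two methods of this paper do not transfer automatically, and it is for this reason that the statement is offered as a conjecture.
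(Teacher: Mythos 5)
There is a genuine gap here, and you have named it yourself: the statement you are addressing appears in the paper only as a conjecture, the paper contains no proof of it, and your closing paragraph correctly concedes that what you have written is a program rather than a proof. Concretely, the gap is the single structural claim on which your entire reduction rests --- that for every finite-depth subfactor planar algebra $P$ the GJS factor $M_1$ is a finite interpolated free group factor $LF(r_1)$. Your first paragraph, deriving the conjecture from that claim via Proposition \ref{dykrdl}(2) and the invariance of the standard invariant under ampliation, is correct; indeed it is precisely the argument of the Remark preceding the conjecture, where it is carried out for depth two with Theorem \ref{main1} supplying the structural input. (A small observation in your favour: only $M_1 \cong LF(r_1)$ with $r_1 < \infty$ is needed, since $(M_0)_\alpha \subseteq (M_1)_\alpha \cong LF(r)$ already realizes $P$ inside $LF(r)$; the structure of $M_0$ is irrelevant to universality.) But nothing in this paper supplies the structural input beyond depth two: every step of \S 4 and \S 5 leans on the Kac-algebra presentation --- the identifications $Gr_1(P) \cong T(H)$ and $Gr_2(P) \cong T(H) \rtimes_\alpha H$ of Propositions \ref{gr1p} and \ref{gr2p}, the exact cumulant formula on $P_2$ of Proposition \ref{freecumulants}, the uniform $R$-cyclicity of the matrices $X(\gamma)$ in Corollary \ref{useful} together with Theorem \ref{rcyclic}, and the freeness statement of Proposition \ref{xp2free} --- and none of these has even a stated analogue for a general finite principal graph $\Gamma$.

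That said, your diagnosis of what is missing is accurate and your proposed route is sensible: generators organized along $\Gamma$ with free-Poisson or semicircular distributions attached to edges and matrix algebras at vertices, freeness established by a cumulant or random-matrix argument in the spirit of \cite{GnnJnsShl2008}, and the resulting free product collapsed to a single $LF(r)$ by the computations of \cite{Dyk1994} weighted by Perron-Frobenius data. This is a natural extrapolation of the depth-two analysis, and the authors' own phrasing (``it is tempting --- and we yield to the temptation --- to conjecture'') signals that they see the same path and the same obstruction. But identifying the missing freeness and distribution statements is not supplying them: as the paper stands, and as your write-up stands, the conjecture remains open, and your proposal should be read as a research plan rather than a proof.
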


\section*{Acknowledgements}
We thank Ken Dykema, Krishna Maddaly, Roland Speicher and
Dan Voiculescu for
their prompt and helpful responses to our questions in free probability
theory.

\end{document}